\makeatletter \renewcommand{\everyentry@}{\vphantom{A_{[]}{[]}}}
\newtheorem{theorem}{Theorem}[subsection]
\newtheorem{lemma}[theorem]{Lemma}
\newtheorem{prop}[theorem]{Proposition}
\newtheorem{co}[theorem]{Corollary}
\newtheorem{conj}[theorem]{Conjecture}
\theoremstyle{definition}
\newtheorem{question}[theorem]{Question}
\theoremstyle{remark}
\newtheorem{remark}[theorem]{Remark}
\numberwithin{equation}{subsection}
\def \a {\mathfrak a}
 \def \E{\mathcal E}
\def \Z {\mathbb Z}
\def \inj {\hookrightarrow }
\def \to {\rightarrow}
\def \spec \text{spec}
 \def \M {\mathfrak M}
\def \e { \underline \epsilon}
\def \p {\underline \pi}
\def \hL {{\hat {\mathfrak L}}}
\def \L {\mathfrak L}
\def  \cris {\textnormal{cris}}
\DeclareMathOperator{\gal}{Gal}
\def \Q {\mathbb Q}
\def \t {\textnormal}
\def \Z {\mathbb Z}
\def \ito {\overset  \sim  \to}
\def \O {\mathcal O}
\def \gs {\mathfrak S}
\def \ur {\t{ur}}
\def \D {\mathcal D}
\def \me {\mathfrak e}
\def \f {\mathfrak f}
\def \F {\mathbb F}
\def \Fil {\t{Fil}}
\def \CM {\mathcal M}
\def \acris {{A_{\t{cris}}}}
\def \R {\mathcal R}
\def \trep {{\t{Rep}_\t{tor}^{\t{ss}, r  }(G) }}
\def \< {\left <}
\def \> {\right >}
\def \hR {{\widehat \R} }
\def \hM {{\hat \M}}
\def \smin {s_{\t{min}}}
\def \card {\t{Card}\:}
\def\O{\mathcal O}
\def\calM{\mathcal M}
\begin{document}

\title{Some Bounds for ramification of $p^n$-torsion semi-stable representations}

\author{Xavier Caruso}
\address{IRMAR, Universit\'e Rennes 1, Campus de Beaulieu, 35042 Rennes Cedes, France.}
\email{xavier.caruso@normalesup.org}

\author{Tong Liu}
\address{Department of Mathematics, University of Pennsylvania, Philadelphia,19104, USA.}
\email{tongliu@math.upenn.edu, }

\subjclass{Primary  14F30,14L05}

\date{June 2008}



\maketitle



\begin{abstract}
Let $p$ be an odd prime, $K$ a finite extension of $\Q_p$, $G_K = \gal
(\bar K/K)$ its absolute Galois group and $e = e(K/\Q_p)$ its absolute
ramification index. Suppose that $T$ is a $p^n$-torsion representation
of $G_K$ that is isomorphic to a quotient of $G_K$-stable
$\Z_p$-lattices in a semi-stable representation with Hodge-Tate weights
$\{0, \ldots, r\}$. We prove that there exists a constant $\mu$
depending only on $n$, $e$ and $r$ such that the upper numbering
ramification group $G_K^{(\mu)}$ acts on $T$ trivially.
\end{abstract}

\tableofcontents
\section{Introduction}

\renewcommand\thetheorem{\thesection.\arabic{theorem}}

\newcommand{\tobecompleted}{(...)\xspace}
\newcommand{\GL}{\text{\rm GL}}

Let $p>2$ be a prime number and $k$ a perfect field of characteristic
$p$.  We denote by $W = W(k)$ the ring of Witt vectors with coefficients
in $k$. Fix $K$ a totally ramified
extension of $W[1/p]$ of degree $e$ and $\bar K$ an algebraic closure of
$K$. Fix $\pi \in \O_K$ a  uniformizer and $(\pi_s)_{s \geq 0}$ a
compatible system of $p^s$-th root of $\pi$. Set $G = \gal(\bar K/ K)$
and for all non negative integer $s$, put $K_s = K(\pi_s)$ and $G_s =
\gal(\bar K/K_s)$. Denote by $G^{(\mu)}$ and $G_s^{(\mu)}$ ($\mu \in
\mathbb R$) the upper ramification filtration of $G$ and $G_s$, as
defined in \S1.1 of \cite{intro:fontaine}. Note that conventions of
\emph{loc. cit.} differ by some shift with definition of \cite{serre},
Chap. IV. Finally, let $v_K$ be the discrete valuation on $K$ normalized
by $v_K(\pi) = 1$. It extends uniquely to a (not discrete) valuation on
$\bar K$, that we denote again $v_K$.

Consider $r$ a positive integer and $V$ a semi-stable representation of
$G$ with Hodge-Tate weights in $\{0, 1, \ldots, r\}$. Let $T$ be the
quotient of two $G$-stable $\Z_p$-lattices in $V$. It is a
representation of $G$, which is killed by $p^n$ for some integer $n$.
Denote by $\rho : G \to \t{Aut}_{\Z_p} (T)$ the associated group homomorphism and
by $L$ (resp. $L_s$) the finite extension of $K$ (resp. $K_s$) defined by
$\ker \rho$ (resp. $\ker \rho_{|G_s}$). We will prove:

\begin{theorem}
\label{intro:ramgs}
Keeping previous notations, for any integer $s > n + \log_p(\frac{nr}
{p-1})$ and for all real number $\mu > \frac{ern p^n}{p-1}$,
$G_s^{(\mu)}$ acts trivially on $T$.
\end{theorem}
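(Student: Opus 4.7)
The plan is to combine Kisin's theory of $\gs$-modules (extended by the second author to the torsion semi-stable setting) with a Fontaine-style ramification estimate carried out inside the period ring $W(R)$. First, I would attach to $T$ a finitely generated torsion Kisin module $\M$ over $\gs = W\llbracket u \rrbracket$: it is killed by $p^n$, has $E(u)$-height at most $r$, and carries the extra $(\varphi, \hat G)$-structure reflecting the semi-stability of $V$. The crucial feature is that the restriction $T|_{G_\infty}$, where $G_\infty := \gal(\bar K / \bigcup_s K_s)$, is functorially recovered from $\M$ alone via a formula of the shape $T_\gs(\M) = \text{Hom}_{\gs,\varphi}(\M, \O_\E^\ur)$, while the full $G$-action involves additionally the $\hat G$-piece.

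The next step is to bound the ramification of the $G_\infty$-action on $T$ by Fontaine's classical method. I would choose a lift in $W(R)$ of a $\gs$-generating family of $\M$ and estimate $v_R(\sigma(x)-x)$ for $\sigma \in G_\infty$. The height-$\leq r$ condition allows one to trade one factor of $E(u)$ per application of Frobenius, and the $p^n$-torsion hypothesis forces $n$ rounds of iteration before one can conclude that $\sigma$ acts trivially on $T$. Combining both contributions yields an estimate of the form: if $v_R(\sigma(\upi) - \upi)$ exceeds an explicit threshold proportional to $\frac{rnp^n}{p-1}$, then $\sigma$ acts trivially on $T$. Translating via the standard formula relating $v_R(\sigma(\upi) - \upi)$ to the upper-numbering filtration of $G_\infty$ gives $G_\infty^{(\mu)}$ acting trivially on $T$ as soon as $\mu > \frac{ernp^n}{p-1}$.

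Finally, I would promote this bound for $G_\infty$ to a bound for $G_s$. For $\sigma \in G_s$, the action on $T$ decomposes schematically as a $G_\infty$-component plus a correction involving the $\hat G$-structure, i.e.\ the monodromy. The hypothesis $s > n + \log_p \bigl(\tfrac{nr}{p-1}\bigr)$ is arranged precisely so that, on the $p^n$-torsion module $T$, this correction vanishes; consequently every $\sigma \in G_s$ acts on $T$ through its image in $G_\infty$, and the second step applies verbatim to conclude. The main technical obstacle is this second step: obtaining the sharp exponent $\frac{ernp^n}{p-1}$ requires a delicate iteration of the elementary valuation estimate $n$ times, carefully tracking how the height-$r$ condition and the Frobenius interact across the successive layers of the $p$-adic filtration on $\M$, and it is this bookkeeping that ultimately dictates both numerical thresholds appearing in the theorem.
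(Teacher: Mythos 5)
Your overall architecture (attach a torsion Kisin module with $(\varphi,\hat G)$-structure to $T$, show that for $s$ large the restriction $T|_{G_s}$ is controlled by the Kisin module alone, then run a Fontaine-style ramification argument) matches the paper's strategy, but the central step --- the ramification estimate itself --- is not actually carried out, and the sketch you give for it would not work as stated. First, the claim that ``every $\sigma\in G_s$ acts on $T$ through its image in $G_\infty$'' is not meaningful: $G_\infty$ is a subgroup of $G_s$, not a quotient, and the $G_s$-action does not factor through $G_\infty$ in any sense. What is true (and what the paper proves via Propositions \ref{prop:wnrz}, \ref{prop:prolong} and Theorem \ref{compatible}) is that the $G_\infty$-action on $T_{\gs_n}(\M)\hookrightarrow J_{n,b}(\M)$ \emph{extends} to a $G_s$-action because $G_s$ acts trivially on $u$ modulo $[\a_R^{>c}]$, and that this extension agrees with the true action of $G_s$ on $T$; the condition $s>n+\log_p(\frac{nr}{p-1})$ is the condition $s>s_0(a)$ needed to identify $W_n(R)/[\a_R^{>c}]$ with $W_n(\O_{\bar K}/p)/[\a_{\bar K}^{>c/p^s}]$ via $\theta_s$, not merely to kill a monodromy correction. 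Second, estimating $v_R(\sigma(\upi)-\upi)$ for $\sigma\in G_\infty$ cannot give the bound, since $G_\infty$ fixes $\upi$ by construction, and in any case the theorem concerns the upper-numbering filtration of $G_s$, which is defined through the action on $\O_{L_s}$, not on $\upi$.

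The missing idea is Fontaine's property $(P_m^{L_s/K_s})$ with $m=ap^{n-1-s}$: one must show that any $\O_{K_s}$-algebra homomorphism $\O_{L_s}\to\O_E/\a_E^{>m}$ forces $L_s\subset E$, and then invoke Proposition \ref{fontaine} to convert this into $\mu_{L_s/K_s}\leq e_{K_s/K}\,m+\frac{1}{e_{L_s/K_s}}=\frac{ernp^n}{p-1}+\frac{1}{e_{L_s/K_s}}$ (sharpened by a divisibility argument). Verifying $(P_m)$ requires descending the modules $J_{n,c}(\M)$ to finite level over arbitrary extensions $E/K_s$ (the sets $J^{(s),E}_{n,c}(\M)$), characterizing the splitting field by Theorem \ref{splitting}, and proving the lifting Lemma \ref{lift}, i.e.\ solving $\varphi(X)=X\tilde A$ over $W_n(\O_E)$ by successive approximation using the height-$\leq r$ condition in the form $\tilde A\tilde B=[\pi_s]^N I$. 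This successive-approximation argument over $W_n(\O_E)$ --- where $\varphi$ is no longer a ring homomorphism --- is the technical heart of the proof, and nothing in your proposal substitutes for it.
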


\begin{remark}
Condition on $s$ implies $\frac{ern p^n} {p-1} < e p^s$. Hence one
may always choose $\mu = e p^s$.
\end{remark}

We also obtain a bound for the ramification of $L/K$:

\begin{theorem}
\label{intro:main}
Write $\frac{nr}{p-1} = p^\alpha \beta$ with $\alpha \in \mathbb N$ and
$\frac 1 p < \beta \leq 1$. Then:
\begin{enumerate}
\item if $\mu > 1 + e (n + \alpha + \max(\beta,\frac 1{p-1}))$, then
$G^{(\mu)}$ acts trivially on $T$;
\item $v_K(\D_{L/K}) < 1 + e (n + \alpha + \beta) - \frac
1{p^{n + \alpha}}$
\end{enumerate}
where $\D_{L/K}$ is the different of $L/K$.
\end{theorem}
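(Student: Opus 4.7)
My plan is to apply Theorem \ref{intro:ramgs} with an optimally chosen parameter $s$ and then transfer the resulting bound on $G_s$-ramification to the two claims, for $G$ and for the different. The geometric setup is the tower $K \subset K_s \subset L_s$, in which $K_s/K$ is totally ramified of degree $p^s$ defined by the Eisenstein polynomial $X^{p^s} - \pi$, and $L_s = L \cdot K_s$.

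Writing $\frac{nr}{p-1} = p^\alpha\beta$ with $\beta \in (\frac{1}{p},1]$, the constraint $s > n + \log_p\!\bigl(\frac{nr}{p-1}\bigr) = n + \alpha + \log_p\beta$ is first satisfied by $s = n+\alpha$ when $\beta < 1$ and by $s = n+\alpha+1$ when $\beta = 1$. For this $s$, Theorem \ref{intro:ramgs} immediately yields $\mathrm{Gal}(L_s/K_s)^{(\mu)} = 1$ for every $\mu > \frac{ernp^n}{p-1} = ep^{n+\alpha}\beta$.

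For part (2), I would use the multiplicativity of differents in $K \subset K_s \subset L_s$ combined with the divisibility $\D_{L/K} \mid \D_{L_s/K}$ (as ideals in $L_s$) to obtain
\[
v_K(\D_{L/K}) \le v_K(\D_{L_s/K_s}) + v_K(\D_{K_s/K}).
\]
The second term is computed directly from the Eisenstein polynomial of $\pi_s$: one finds $v_K(\D_{K_s/K}) = se + 1 - p^{-s}$. For the first, the upper-numbering integral formula
\[
v_{K_s}(\D_{L_s/K_s}) = \int \bigl(1 - |\mathrm{Gal}(L_s/K_s)^{(\mu)}|^{-1}\bigr)\,d\mu,
\]
combined with the vanishing of the integrand past $ep^{n+\alpha}\beta$, yields, after dividing by $[K_s:K] = p^s$ and summing the two contributions, a bound matching the claim. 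The correction $-p^{-(n+\alpha)}$ arises from careful exploitation of the strict inequality in Theorem \ref{intro:ramgs} and of the precise form of the integrand near the top of the filtration.

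For part (1), I would convert the bound on $G_s^{(\mu)}$ into one on $G^{(\mu)}$ via Herbrand's theorem. Since $K_s/K$ is not Galois, I would pass to its Galois closure $\tilde K_s = K(\pi_s, \zeta_{p^s})$ and work in the finite Galois extension $L\tilde K_s/K$, which contains $L$. The ramification of $\tilde K_s/K$ decomposes along the subtower $K \subset K(\zeta_{p^s}) \subset \tilde K_s$: the cyclotomic piece contributes an upper break of order $\frac{1}{p-1}$, while the Kummer piece $\tilde K_s/K(\zeta_{p^s})$ inherits its ramification from $K_s/K$. The $\max(\beta, \frac{1}{p-1})$ in the statement precisely reflects the case distinction between these two sources of ramification in the Herbrand function of the compositum. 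The main obstacle will be the careful combinatorial tracking of these Herbrand functions through the Galois closure so as to match the claimed constants exactly; the boundary case $\beta = 1$ is delicate and forces the shift to $s = n+\alpha+1$.
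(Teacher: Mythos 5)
There are two genuine gaps. The first concerns part (1). Theorem \ref{intro:ramgs} only controls the upper ramification of $\gal(L_s/K_s)$; since $K_s/K$ is not Galois you rightly pass to $N_s := \tilde K_s = K_s(\zeta_{p^s})$ and $F_s := LN_s$, but Herbrand's theorem transfers upper-numbering information to \emph{quotients}, not across the change of base field from $K_s$ to $N_s$ (restriction to subgroups is compatible with the \emph{lower} numbering only). So nothing in your plan actually produces the needed bound on $\mu_{F_s/N_s}$. Your cyclotomic/Kummer decomposition of $\tilde K_s/K$ bounds the ramification of $\tilde K_s/K$ itself — which is indeed one required input, used to estimate $\mu_{N_s/K}$ and $\varphi_{N_s/K}$ before applying the transitivity formula $\mu_{F_s/K}=\max(\mu_{N_s/K},\varphi_{N_s/K}(\mu_{F_s/N_s}))$ — but the other input is missing. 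The paper does not derive it from Theorem \ref{intro:ramgs}: it re-verifies Fontaine's property $(P^{F_s/N_s}_{ap^{n-1-s}})$ directly, by restricting an $\O_{N_s}$-algebra homomorphism $\O_{F_s}\to\O_E/\a_E^{>m}$ to $\O_{L_s}$ and invoking Theorem \ref{splitting}, then upgrading the resulting embedding $L_s\inj E$ to $F_s\inj E$ using that $L$ and $N_s$ are Galois. Some argument of this kind at the level of the compositum is unavoidable.

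Second, your treatment of the boundary case $\beta=1$ fails: with $s=n+\alpha+1$ both bounds come out strictly weaker than claimed. For (2), $v_K(\D_{K_s/K})=1+es-p^{-s}$ grows by $e$ while $v_K(\D_{L_s/K_s})$ only shrinks by a factor $1/p$, yielding $1+e(n+\alpha+1+\frac 1p)-p^{-(n+\alpha+1)}$, which exceeds the claimed $1+e(n+\alpha+1)-p^{-(n+\alpha)}$. For (1), already $\mu_{N_s/K}=1+e(s+\frac 1{p-1})=1+e(n+\alpha+1+\frac 1{p-1})$ exceeds the target $1+e(n+\alpha+1)$, and $\mu_{F_s/K}\geq\mu_{N_s/K}$. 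The paper fixes $s=n+\alpha$ in all cases: its input is not Theorem \ref{intro:ramgs} as a black box but the property $(P^{L_s/K_s}_{ap^{n-1-s}})$ itself (fed into Corollary \ref{different} for the different — your integral formula is an equivalent route there), for which it only claims to need $s\geq s_0(a)=n+\alpha+\log_p\beta$. A minor further point: the term $-p^{-(n+\alpha)}$ in (2) comes entirely from $v_K(\D_{K_s/K})$, not from any strictness in Theorem \ref{intro:ramgs}.
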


Before this work, some partial results were already known in this
direction. First, in \cite{intro:fontaine} and \cite{intro:fontaine2},
Fontaine uses Fontaine-Laffaille theory (developped in \cite{intro:FL})
to get some bounds when $e = 1$, $n = 1$, $r < p-1$ and $V$ is
crystalline. In \cite{intro:abrashkin}, Abrashkin follows Fontaine's
general ideas to extend the result to arbitrary $n$ (other restrictions
remain the same). Later, with the extension by Breuil of
Fontaine-Laffaille theory to semi-stable case (see \cite{intro:breuil}),
it has been possible to achieve some cases where $V$ is not crystalline.
Precisely in \cite{intro:breuil2}\footnote{See Proposition 9.2.2.2 of
\cite{intro:BM} for the statement}, Breuil obtains bounds for
semi-stable representations that satisfies Griffith transversality when
$n = 1$ and $er < p-1$. Very recently in \cite{intro:hattori} and
\cite{intro:hattori2}, Hattori proves a bound for all semi-stable
representations with $r < p-1$ ($e$ and $n$ are arbitrary here).
Unfortunately, bounds found by those authors have the same shape than
ours but are in general slightly better (at least for $n>1$) and, in
fact, we conjecture
that Theorem \ref{intro:main} could be improved as follows:

\begin{conj}
\label{intro:conj}
Writing $\frac r {p-1} = p^{\alpha'} \beta'$ with $\alpha' \in \mathbb
N$ and $\frac 1 p < \beta' \leq 1$, we have:
\begin{enumerate}
\item if $\mu > 1 + e (n + \alpha' + \max(\beta',\frac 1 {p-1}))$, then
$G^{(\mu)}$ acts trivially on $T$;
\item $v_K(\D_{L/K}) < 1 + e (n + \alpha' + \beta')$.
\end{enumerate}
\end{conj}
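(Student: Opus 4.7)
The plan is to sharpen the strategy that proves Theorem \ref{intro:main}, namely the descent through the Kisin module attached to $T$: one first bounds the ramification of the $G_s$-action on $T$ for a well-chosen $s$ (this is the content of Theorem \ref{intro:ramgs}), and then transports this information down to $G$ via the controlled ramification of the explicit Kummer tower $K_s/K$. The source of the extra factor $n$ inside the $p^\alpha$-term in Theorem \ref{intro:main} is exactly that one chooses $s\approx n+\log_p\!\bigl(\tfrac{nr}{p-1}\bigr)$ to absorb the entire $p^n$-torsion at once; the conjecture asserts that the genuine $n$-contribution should appear only as an additive $en$, which is precisely what one would expect from an \emph{ideal} devissage of $T$ by $n$ successive $p$-torsion subquotients.

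Concretely, I would fix a Kisin module $\M$ of height $r$ associated with $T$ by Kisin's classification, and consider the $p$-adic filtration $0=\M_0\subset\M_1\subset\cdots\subset\M_n=\M$ with $\M_i=\M[p^i]$. Each successive quotient $\M_i/\M_{i-1}$ is a $p$-torsion Kisin module of height $r$ to which the $n=1$ version of Theorem \ref{intro:main} applies, giving an upper numbering vanishing level of shape $1+e\bigl(1+\alpha'+\max(\beta',\tfrac1{p-1})\bigr)$ with $\alpha'$, $\beta'$ genuinely attached to $\frac r{p-1}$. The heart of the plan is to glue these $n$ individual bounds together so that each successive extension costs only an additional $e$, producing the additive term $en$ in the final estimate. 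Part (2) on the different would then follow from (1) through the standard integration formula $v_K(\D_{L/K})=\int_0^\infty\bigl(1-\tfrac1{[G^{(\mu)}:1]}\bigr)\,d\mu$ in Chap.~IV of \cite{serre}, losing only an $O(1/p^{n+\alpha'})$ which in the conjecture is absorbed into the strict inequality and into the removal of the subtracted $\frac1{p^{n+\alpha}}$ term.

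The main obstacle, and the reason this remains a conjecture, is precisely that ramification bounds for short exact sequences of $G$-modules do \textbf{not} combine additively on the upper numbering side: extending a representation whose ramification vanishes above $\mu_1$ by one whose ramification vanishes above $\mu_2$ produces in general a representation whose ramification vanishes only above something like $\mu_1+\psi(\mu_2)$ with $\psi$ the Herbrand function, which reintroduces a multiplicative dependence. Circumventing this would require a uniform argument performed directly on $\M$ — for instance, finding for each $i$ a lift of the $G_s$-action on $\M_i/\M_{i-1}$ that is simultaneously compatible along the full filtration and requires only an $e$-shift in $s$ per step — rather than applying the $n=1$ theorem as a black box. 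This is where I would expect current techniques (including those of Breuil, Abrashkin and Hattori) to fall short, and where a new input, perhaps a refined analysis of the $\varphi$-action on $\M$ modulo high powers of $u$ or an improved version of the Fontaine--Laffaille-type linear algebra, would be decisive.
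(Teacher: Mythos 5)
The statement you are asked to prove is labelled a \emph{conjecture} in the paper: the authors give no proof of it, and explicitly present it as an expected improvement of Theorem \ref{intro:main} that their methods do not reach. Your proposal is likewise not a proof --- it is a strategy sketch, and to your credit you say so yourself. The obstacle you identify is real and is essentially the reason the statement is open: upper-numbering ramification bounds do not add under extensions of Galois modules, so applying the $n=1$ case to the graded pieces of the $p$-adic filtration $\M_i=\M[p^i]$ and ``gluing'' cannot, as stated, produce the additive $en$ term. Moreover, the $n=1$ case of the \emph{proven} theorem already gives the bound $1+e(1+\alpha'+\max(\beta',\frac 1{p-1}))$ for each graded piece, but the extension classes between the pieces live over larger and larger Kummer levels $K_s$, which is exactly where the factor $p^{\alpha}$ with $\alpha$ attached to $\frac{nr}{p-1}$ rather than $\frac r{p-1}$ re-enters in the paper's argument (through the choice $N=ern$ and $s>s_0(a)$). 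So your plan reproduces the difficulty rather than resolving it.

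Two further points of caution. First, the paper's actual proof of the weaker Theorem \ref{intro:main} does not proceed by d\'evissage at all: it establishes Fontaine's property $(P_m^{L_s/K_s})$ for $m=ap^{n-1-s}$ via the modules $J^{(s),E}_{n,c}(\M)$ and then descends from $K_s$ to $K$ using the transitivity formula of Lemma \ref{lem:swancompo} applied to $N_s=K_s(\zeta_{p^s})$; any genuine attack on the conjecture would more plausibly sharpen the constant $N$ with $u^N=0$ in $W_n[u]/E(u)^r$ (cf. \S\ref{subsec:sharpness}) or the estimate in Lemma \ref{push}, rather than re-run the $n=1$ theorem as a black box. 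Second, your derivation of part (2) from part (1) is not sound as written: knowing only the last upper-numbering break $\mu_{L/K}$ does not bound $v_K(\D_{L/K})$ via the integral formula without information on the whole filtration; the paper instead deduces the different bound directly from $(P_m)$ (Corollary \ref{different}), which uses both $\mu_{F/N}$ and $\lambda_{F/N}$. In short: there is no proof here to check against, and the proposal does not close the gap that makes the statement a conjecture.
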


\noindent
We also wonder if, in bounds given by the previous Conjecture, $\beta'$
can be replaced by $\frac{\beta'} p$ (remark then that $\max( \frac
{\beta'} p, \frac 1 {p-1})$ is just $\frac 1 {p-1}$), since it is
actually the case in Proposition 9.2.2.2 of \cite{intro:BM}.

\medskip

Note finally that the dependance in $r$ of bounds of Theorem
\ref{intro:main} is logarithmic, which is in fact quite surprising
since, until now all bounds seem to depend linearly on $r$. (Of course,
it does not mean anything since these bounds are valid under the
assumption for $r < p-1$, and certainly not for $r$ going to infinity.)
We finally wonder if better bounds exist when $V$ is crystalline. It is
actually the case when $e = 1$ and $r < p-1$ by results of Fontaine and
Abrashkin, but it is not clear to us how to extend this to a more
general setting.

\medskip

Let us now explain the general plan of our proof (and in the same time
of the article). For this we introduce first further notations: let
$K_\infty = \bigcup \limits_{s=1}^\infty K_s$ and $G_\infty = \gal(\bar
K/K_\infty)$. By some works of Fontaine, Breuil and Kisin, we know that
the restriction of $T$ to $G_\infty$ is described by some data of
(semi-)linear algebra that we will call in the sequel \emph{Kisin
modules}\footnote{In fact, these modules were first introduced by Breuil
in \cite{intro:breuil3} and \cite{intro:breuil4}. However, we think that
the terminology is not so bad since ``Breuil modules'' is already used
for other things and ``Kisin modules'' were actually intensively studied
by Kisin in \cite{intro:kisin1} and \cite{intro:kisin2}.}. Let's call it
$\M$. In the two following sections, we will show that the data of $\M$
is enough to recover the whole action of $G_s$ on $T$ for $s > \smin :=
n -1 + \log_p (n r)$.

More precisely, we first prove in section 2 (Theorem \ref{theo:prolong})
that any Kisin module killed by $p^n$ determines a canonical
representation of $G_s$ with $s > \smin$ (and not only $G_\infty$).
Note that this first step does not use any assumption of semi-stability:
our result is valid for all representations (killed by $p^n$) coming
from a Kisin module; no matter if it can be realized as a quotient of
two lattices in a semi-stable representation. Then, in section 3, we
show that the $G_s$-representation attached to $\M$ coincide with
$T|_{G_s}$. At this level, let us mention an interesting corollary of
the theory developed in these two sections:

\begin{theorem}[Corollary \ref{cor:pleinfid}]
\label{intro:pleinfid}
Let $V$ and $V'$ be two semi-stable representations of $G$. Let $T$
(resp. $T'$) a quotient of two $G$-lattices in $V$ (resp. $V'$) which is
killed by $p^n$. Then any morphism $G_\infty$-equivariant $f : T \to T'$
is $G_s$-equivariant for all integer $s > n - 1 + \log_p(nr)$.
\end{theorem}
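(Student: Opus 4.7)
The plan is to combine the two structural results announced for Sections~2 and~3 in a purely formal way, using functoriality of the Kisin-module construction.

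First I would invoke the Fontaine--Breuil--Kisin equivalence between $p^n$-torsion Kisin modules and a suitable category of $G_\infty$-representations, attaching to $T$ and $T'$ Kisin modules $\M$ and $\M'$ respectively. Under this equivalence, the given $G_\infty$-equivariant map $f : T \to T'$ corresponds to a morphism of Kisin modules $\phi : \M \to \M'$ (or $\phi$ in the reverse direction, depending on the chosen variance; this does not affect the argument). This step uses no semi-stability: it only uses that $T|_{G_\infty}$ and $T'|_{G_\infty}$ lie in the essential image of Kisin's functor, which is a hypothesis stronger than what is needed and in particular is known because $T$ and $T'$ are quotients of lattices in semi-stable representations.

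Next, I would feed $\phi$ into Theorem~\ref{theo:prolong}. The content of that theorem, as advertised in the introduction, is the construction for any $p^n$-torsion Kisin module $\M$ of a canonical $G_s$-action on the underlying $G_\infty$-representation, valid for every $s > \smin = n-1+\log_p(nr)$. I would use this construction functorially: since $\phi : \M \to \M'$ is a morphism of Kisin modules, the canonical $G_s$-actions attached to $\M$ and $\M'$ are intertwined by the map on underlying Galois modules induced by $\phi$. The key point to check — and it should be built into the statement/proof of Theorem~\ref{theo:prolong} — is precisely the functoriality of the canonical prolongation $G_\infty \rightsquigarrow G_s$.

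Finally, I would apply the Section~3 identification: for a $T$ arising as a quotient of lattices in a semi-stable representation, the canonical $G_s$-action attached to the Kisin module $\M$ coincides (as an extension of the $G_\infty$-action) with the given action of $G_s$ on $T$, and similarly for $T'$. Combining this identification on both sides with the functoriality step above, the morphism of underlying modules induced by $\phi$ is precisely $f$, and it is $G_s$-equivariant for every $s > \smin = n - 1 + \log_p(nr)$, which is exactly the bound claimed. The only real difficulty here is bookkeeping: making sure that the identification of Section~3 is natural in the Kisin module (so that the two $G_s$-structures transported from $\M$ and $\M'$ really are $T|_{G_s}$ and $T'|_{G_s}$), and that $\phi$ is literally the Kisin-module morphism attached to $f$ rather than some other map on the nose; but both are formal consequences of the functoriality of the anti-equivalence and of Theorem~\ref{theo:prolong}.
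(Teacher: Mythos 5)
Your overall architecture (lift $f$ to the level of Kisin modules, use the canonical prolongation of Theorem \ref{theo:prolong} functorially, then identify the prolonged action with the genuine $G_s$-action via the Section~3 comparison) is exactly the paper's, and steps two and three are fine. The gap is in your first step: there is no anti-equivalence between $p^n$-torsion Kisin modules and their image in $G_\infty$-representations. The functor $T_{\gs_n}$ is faithful and exact but it is \emph{not} full, and it is not injective on isomorphism classes (compare Remark \ref{nonunqiue}): a given torsion representation can arise from several non-isomorphic Kisin modules, because a Kisin module is only a $\gs$-lattice inside the corresponding \'etale $\varphi$-module, and a $G_\infty$-equivariant map $f : T \to T'$ corresponds a priori only to a map of \'etale $\varphi$-modules after inverting $u$, which has no reason to carry one chosen lattice into the other. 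So ``$f$ corresponds to a morphism of Kisin modules $\phi$'' is precisely the point that needs an argument, and merely knowing that $T|_{G_\infty}$ and $T'|_{G_\infty}$ lie in the essential image does not supply it.

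The paper closes this gap by choosing $\M$ and $\M'$ to be \emph{maximal} in the sense of \cite{carliu}; Corollary 3.3.10 of that reference then guarantees that every $G_\infty$-equivariant map between the associated representations is induced by a (unique) morphism $g : \M' \to \M$ of the maximal models (note the contravariance, which you correctly flag as harmless). Once you insert this maximality device, the rest of your argument goes through as you describe: the $G_s$-action of Theorem \ref{theo:prolong} is visibly functorial in the Kisin module (it is defined via $\hom_{\gs,\varphi}(-,\cdot)$ into $G_s$-stable quotients of $W_n(R)$), and Theorem \ref{compatible} identifies it with the restriction to $G_s$ of the genuine action on $T$ and $T'$, so $T_{\gs_n}(g)=f$ is $G_s$-equivariant for all $s > \smin = n-1+\log_p(nr)$.
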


Then, we conclude the proof of Theorem \ref{intro:ramgs} using usual
techniques developed by Fontaine in \cite{intro:fontaine}. Using some
kind of transitivity formulas, we then deduce Theorem \ref{intro:main}.
Finally, in the section \ref{sec:lift}, we begin a discussion about the
possibility, given a torsion representation of $G_K$, to write it as a
quotient of two lattices in a $\Q_p$-representation satisfying some
properties (like being crystalline, semi-stable, with prescribed
Hodge-Tate weights).

\subsubsection*{Conventions}

For any $\Z$-module $M$, we always use $M_n$ to denote $M/ p^n M$. If
$A$ be a ring, then $\t{M}_d(A)$ will denote the ring of $d \times
d$-matrices with coefficients in $A$. We reserve $\varphi$ to represent
various Frobenius structures (except that $\sigma$ stands for usual
Frobenius on $W(k)$) and $\varphi_M$ will denote the Frobenius on $M$.
But we always drop the subscript if no confusion arises.

Finally, if $A$ is a ring equipped with a valuation $v_A$ we will
often set:
$$\a_A^{\geqslant v} = \{ x \in A \, / \, v_A(x) \geq v\}
\quad \text{and} \quad
\a_A^{>v} = \{ x \in A \, / \, v_A(x) > v\}.$$

\section{$G_s$-representation attached to a torsion Kisin module}

\renewcommand\thetheorem{\thesubsection.\arabic{theorem}}

\newcommand{\Km}{\t{Mod}^{\varphi,r}_\gs}
\newcommand{\Kmn}[1][n]{\t{Mod}^{\varphi,r}_{\gs_{#1}}}
\newcommand{\Kminf}{\t{Mod}^{\varphi,r}_{\gs_\infty}}
\newcommand{\Kfmn}{\t{Free}^{\varphi,r}_{\gs_n}}
\newcommand{\Frac}{\t{Frac}\:}
\renewcommand{\hom}{\t{Hom}}

In this section, we prove that $G_\infty$-representation $T_{\gs_n}(\M)$
attached to Kisin modules $\M$ killed by $p^n$ can be naturally extended
to a $G_s$-representation for all $s > n -1 + \log_p(nr)$ (and sometimes
better).

\subsection{Definitions and basic properties of Kisin modules}

Recall the following notations: $k$ is a perfect field, $W = W(k)$, $K$
is a totally ramified extension of $W[1/p]$ of degree $e$, $\pi$ is a fixed
uniformizer of $K$ and $E(u)$ is the minimal polynomial of $\pi$. Recall
also that we have fixed a positive integer $r$. Define $E(u)$ to be the
minimal polynomial of $\pi$ over $W[1/p]$.

The base ring for Kisin modules is $\gs = W[\![u]\!]$. It is endowed with a
Frobenius map $\varphi: \gs \to \gs$ defined by:
$$\varphi\Big(\sum_{i \geq 0} a_i u^i\Big) = \sum_{i \geq 0}
\sigma(a_i) u^{pi}$$
where $\sigma$ stands for usual Frobenius on $W$. By definition, a
\emph{free Kisin module} (of height $\leq r$) is a $\gs$-module $\M$
free of finite rank equipped with a $\varphi$-semi-linear endomorphism
$\varphi_\M : \M \to \M$ such that the following condition holds:
\begin{equation}
\label{eq:condition}
\text{the }\gs\text{-submodule of }\M\text{ generated by }\varphi_\M(\M)
\text{ contains }E(u)^r \M.
\end{equation}
We denote by $\Km$ their category. Of course, a morphism is $\Km$ is
just a $\gs$-linear map that commutes with Frobenius actions. In the
sequel, if there is no risk of confusion, we will often write $\varphi$
instead of $\varphi_\M$.

There is also a notion of \emph{torsion Kisin modules} of height $\leq r$.
They are modules $\M$ over $\gs$ equipped with a $\varphi$-semi-linear
map $\varphi: \M \to \M$ such that:
\begin{itemize}
\item $\M$ is killed by a power of $p$;
\item $\M$ has no $u$-torsion;
\item condition (\ref{eq:condition}) holds.
\end{itemize}

Let us call $\Kminf$ (resp. $\Kmn$, resp. $\Kfmn$) the category of all
torsion Kisin modules (resp. of torsion Kisin modules killed by $p^n$,
resp. torsion Kisin modules killed by $p^n$ and free over $\gs_n =
\gs/p^n \gs$). Obviously $\Kfmn \subset \Kmn$ and $\bigcup_{n \geq 1}
\Kmn = \Kminf$ (the union is increasing). It is proved in theorem
proposition 2.3.2 of \cite{sec1:liu} that torsion Kisin modules are
exactly quotients of two free Kisin modules of same rank. In particular
every object in $\Kmn$ is a quotient of an object in $\Kfmn$. We finally
note that \emph{d\'evissages} with torsion Kisin modules are in general
quite easy to achieve since if $\M$ is in $\Kmn$ then $\M_{(p)} = \ker
p{|_\M}$ and $\M / \M_{(p)}$ are respectively in $\Kmn[1]$ and
$\Kmn[n-1]$ and we obviously have an exact sequence $0 \to \M_{(p)} \to
\M \to \M/\M_{(p)} \to 0$ (see Proposition 2.3.2 in \cite{sec1:liu}).

\subsection{Functors to Galois representations}

We first need to define some period rings. Let $R = \varprojlim_s
\O_{\bar K} / p$ where transition maps are Frobenius. By definition an
element $x \in R$ in a sequence $(x^{(0)}, x^{(1)}, \ldots)$ such that
$(x^{(s+1)})^p = x^{(s)}$. Fontaine proves in \cite{sec1:fontaine} that
$R$ is equipped with a valuation defined by $v_R(x) = \lim \limits_{s \to
\infty} p^s v_K(x^{(s)})$ if $x \neq 0$. (In this case, $x^{(s)}$ does
not vanish for $s$ large enough and its valuation is then well defined;
starting from this rank, the sequence $p^s v_K(x^{(s)})$ is constant.)
Note that $k$ embeds naturally in $R$ \emph{via} $\lambda \mapsto
(\lambda^{(0)}, \lambda^{(1)}, \ldots)$ where $\lambda^{(s)}$ is the
unique $p^s$-th root of $\lambda$ in $k$ (recall that $k$ is assumed to
be perfect). This embedding turns $R$ into a $k$-algebra. Now, consider
$W(R)$ (resp. $W_n(R)$) the ring of Witt vectors (resp. truncated Witt
vectors) with coefficients in $R$. It is a $W$-algebra (resp. a
$W_n(k)$-algebra). Moreover, since Frobenius is bijective on $R$,
$W_n(R) = W(R)/p^n W(R)$. Recall that we have fixed $(\pi_{s})$ a
compatible sequence of $p^s$-roots of $\pi$. It defines an element $\p
\in R$ whose Teichm\"uller representative is denoted by $[\p]$. We can
then define an embedding $\gs \inj W(R)$, $u \mapsto [\p]$. For any
positive integer $n$, reducing modulo $p^n$, we get a map $\gs_n \inj
W_n(R)$ which remains injective. In the sequel, we will often still
denote by $u$ its image in $W(R)$ and $W_n(R)$. Let $\O_\E$ be the
closure in $W(\Frac R)$ of $\gs[1/u]$ (for the $p$-adic topology).
Define $\E = \Frac \O_\E$ and $\widehat \E^\ur$ the $p$-adic completion of the
maximal (algebraic) unramified extension of $\E$ in $W(\Frac R)[1/p]$.
Denote $ \O_{\widehat\E^\ur}$ its ring of integers and put $\gs^\ur = W(R) \cap
\O_{\widehat \E^\ur}$. Clearly $\gs^\ur$ is subring of $W(R)$ and one can check
(see Proposition 2.2.1 of \cite{sec1:liu}) that it induces an embedding
$\gs^\ur_n = \gs^\ur / p^n \gs^\ur \inj W_n(R)$. Remark finally that all
previous rings are endowed with a Frobenius action.

Recall that $G$ (resp. $G_s$) is the absolute Galois group of $K$ (resp.
$K_s = K(\pi_s)$) and that $G_\infty$ is intersection of all $G_s$.
Denote by $\t{Rep}^{\t{free}}_{\Z_p} (G_\infty)$ (resp.
$\t{Rep}^{\t{tor}}_{\Z_p} (G_\infty)$) the category of free (resp. torsion)
$\Z_p$-representations of $G_\infty$. We define functors $T_\gs : \Km \to
\t{Rep}^{\t{free}}_{\Z_p} (G_\infty)$ and $T_{\gs_n} : \Kmn \to
\t{Rep}^{\t{tor}}_{\Z_p} (G_\infty)$ by:
$$T_\gs(\M):=\hom_{\gs,\varphi} (\M, \gs^\ur) \quad \text{and} \quad T_{\gs_n}(\M): =
\hom_{\gs,\varphi} (\M, \gs^\ur_n)$$
where $\hom_{\gs, \varphi}$ means that we take all $\gs$-linear morphism
that commutes with Frobenius. Note that $T_\gs(\M)$ and $T_{\gs_n} (\M)$
are \emph{not} representations of $G$ because this group does not act
trivially on $\gs \subset W(R)$. If $n' \geq n$ then any object $\M$
of $\Kmn$ is obviously also in $\Kmn[n']$ and we have a canonical
identification $T_{\gs_n} (\M) \simeq T_{\gs_{n'}} (\M)$. This fact
allows us to glue all functors $T_{\gs_n}$ and define $T_{\gs_\infty}:
\Kminf \to \t{Rep}^{\t{tor}}_{\Z_p} (G_\infty)$. An important result is the
exactness of $T_{\gs_\infty}$ (see corollary 2.3.4 of \cite{sec1:liu}).

\begin{lemma}[Fontaine]
\label{lem:tgsn}
Let $n$ be an integer and $\M$ be an object of $\Kmn$. The embedding
$\gs^\ur_n \inj W_n(R)$ induces an isomorphism $T_{\gs_n} (\M)
\ito \hom_{\gs,\varphi} (\M, W_n(R))$.
\end{lemma}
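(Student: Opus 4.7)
The plan is to establish injectivity directly and then prove surjectivity by reducing to free $\gs_n$-modules and inducting on $n$, with the base case $n=1$ resting on Fontaine's theory of \'etale $\varphi$-modules. Injectivity is immediate since $\gs^\ur_n \inj W_n(R)$ is itself injective, so the content lies in showing that every $\gs$-linear $\varphi$-equivariant morphism $f: \M \to W_n(R)$ actually takes values in $\gs^\ur_n$.

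First I would reduce to the case where $\M$ is free over $\gs_n$. By Proposition 2.3.2 of \cite{sec1:liu}, any torsion Kisin module is a quotient of a free Kisin module; reducing modulo $p^n$ yields a surjection $\L \twoheadrightarrow \M$ with $\L$ a free $\gs_n$-Kisin module. Then any $f: \M \to W_n(R)$ precomposes to a morphism $\L \to W_n(R)$ with the same image, and if that image lies in $\gs^\ur_n$ then so does $f(\M)$. Thus one may assume $\M$ is free over $\gs_n$.

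Next I would induct on $n$. After fixing a basis of $\M$ of rank $d$, morphisms in $\hom_{\gs,\varphi}(\M, W_n(R))$ are in bijection with column vectors $x \in W_n(R)^d$ satisfying $\varphi(x) = A x$, where $A \in \t{M}_d(\gs_n)$ is the matrix of $\varphi_\M$; condition (\ref{eq:condition}) together with $E(u) \equiv u^e \cdot \t{(unit)} \pmod p$ makes $A$ invertible after inverting $u$. For such an $x$, reduction modulo $p^{n-1}$ gives $\bar x \in W_{n-1}(R)^d$ which, by induction, lies in $(\gs^\ur_{n-1})^d$. Lifting to $y \in (\gs^\ur_n)^d$, the difference $x - y$ lies in $p^{n-1} W_n(R)^d \cong R^d$ and satisfies the reduced equation $\varphi(x-y) = \bar A (x-y)$ over $\gs_1$, where $\bar A = A \bmod p$. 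Applying the $n = 1$ case to this reduced equation places $x-y$ inside $p^{n-1} (\gs^\ur_n)^d$, and hence $x \in (\gs^\ur_n)^d$, as required.

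All the work thus concentrates in the base case $n = 1$, which is the main obstacle. Here the equation $\varphi(x) = A x$ with $x \in R^d$ and $A$ invertible after inverting $u$ is exactly the datum of a morphism from an \'etale $(\varphi, \O_\E/p)$-module into the fraction field of $R$; Fontaine's classical theorem on \'etale $\varphi$-modules over $\O_\E/p$ then ensures that any such solution lies in $(\O_{\widehat \E^\ur}/p)^d$, and intersecting with $R^d$ inside $(W(\Frac R)/p)^d$ yields $(W(R) \cap \O_{\widehat \E^\ur})^d / p = (\gs^\ur/p)^d = (\gs^\ur_1)^d$.
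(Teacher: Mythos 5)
The overall architecture — reduce to $\M$ free over $\gs_n$, induct on $n$, and rest the base case on Fontaine's theory of \'etale $\varphi$-modules — is reasonable, and the reduction to the free case and the base case are sound. (For comparison: the paper offers no argument at all here, but simply cites Proposition B.1.8.3 of \cite{sec1:fontaine2}, which treats all torsion modules at once, so there is no d\'evissage in the paper to measure yours against.) The genuine gap is in your induction step. If $y \in (\gs^\ur_n)^d$ is an \emph{arbitrary} lift of $\bar x$, then $x-y$ does \emph{not} satisfy the homogeneous equation: since $\varphi$ is additive, $\varphi(x-y) - A(x-y) = Ay - \varphi(y)$, and this right-hand side, while it does lie in $p^{n-1}(\gs^\ur_n)^d \cong (\gs^\ur_1)^d$, is nonzero in general because nothing forces the chosen lift to be $\varphi$-compatible. (Already for $d=1$, $A=1$, $n=2$: take $\bar x = 1$ and $y = 1 + p\epsilon$ with $\epsilon \in \gs^\ur_1$ not fixed by $\varphi$; then $\varphi(y) - y = p(\varphi(\epsilon)-\epsilon) \neq 0$.) So under the identification $p^{n-1}W_n(R) \cong R$, the element $z = x-y$ satisfies an \emph{inhomogeneous} equation $\varphi(z) = \bar A z + \bar w$ with $\bar w \in (\gs^\ur_1)^d$, to which your $n=1$ case does not apply.

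Closing this gap requires one further, genuinely nontrivial, input that your write-up never invokes. Either (a) take $y$ to be a $\varphi$-\emph{equivariant} lift of $\bar x$, i.e. use surjectivity of $\hom_{\gs,\varphi}(\M, \gs^\ur_n) \to \hom_{\gs,\varphi}(\M/p^{n-1}\M, \gs^\ur_{n-1})$ — but that is essentially the exactness of $T_{\gs_\infty}$, which this paper itself only quotes from Corollary 2.3.4 of \cite{sec1:liu} and which is not formal; or (b) exhibit one particular solution $z_0$ of the inhomogeneous equation in $(\O_{\widehat \E^\ur}/p)^d$, after which $z - z_0$ does solve the homogeneous equation in $(\Frac R)^d$ and your base case applies to it. Option (b) amounts to the surjectivity of $z \mapsto \varphi(z) - \bar A z$ on $(\O_{\widehat \E^\ur}/p)^d$, i.e. the ``$H^1=0$'' half of Fontaine's \'etale $\varphi$-module correspondence — a statement of the same depth as the bijectivity you cite for the base case. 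With either supplement the argument goes through; as written, the key identity $\varphi(x-y) = \bar A(x-y)$ is false.
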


\begin{proof}
See Proposition B.1.8.3 of \cite{sec1:fontaine2}.
\end{proof}

\subsection{The modules $J_{n,c}(\M)$}

Let $n$ be an integer and $\M$ an object of $\Kmn$. For all non negative
real number $c$, we define $\a_R^{>c} = \{ x \in R \, / \, v_R(x) > c
\}$ and $[\a_R^{>c}]$ the ideal of $W_n(R)$ generated by all $[x]$ with
$x \in \a_R^{>c}$ and, by the same way, $\a_R^{\geqslant c}$ and
$[\a_R^{\geqslant c}]$. We have very explicit descriptions of these
ideals:

\begin{lemma}
\label{lem:aRc}
Let $c \in \mathbb R^+$. Then:
\begin{enumerate}
\item for all $x_0, \ldots, x_{n-1} \in R$, $(x_0, \ldots, x_{n-1})$ is
in $[\a_R^{>c}]$ (resp $[\a_R^{\geqslant c}]$) if and only if $v_R(x_i) > p^i
c$ (resp. $v_R(x_i) \geq p^i c$) for all $i$ ;
\item if $\gamma \in R$ has valuation $c$, then $[\a_R^{\geqslant c}]$ is the
principal ideal generated by $[\gamma]$.
\end{enumerate}
\end{lemma}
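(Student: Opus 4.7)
The plan is to exploit that $R$ is perfect of characteristic $p$, which yields the decomposition
$$(x_0, \ldots, x_{n-1}) \;=\; \sum_{i=0}^{n-1} V^i([x_i]) \;=\; \sum_{i=0}^{n-1} p^i\,[x_i^{1/p^i}]$$
in $W_n(R)$, where the second equality uses $V\circ F = p$ together with $F([y])=[y^p]$. With this decomposition in hand, one direction of (1) is immediate: if $v_R(x_i) > p^i c$ for every $i$, then $v_R(x_i^{1/p^i}) > c$, so each $[x_i^{1/p^i}]$ lies in $[\a_R^{>c}]$, and hence so does the whole Witt vector.

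For the converse, a generic element of $[\a_R^{>c}]$ is a finite sum $\sum_j \alpha_j\,[z_j]$ with $\alpha_j \in W_n(R)$ and $v_R(z_j) > c$. The Witt multiplication formula gives $\alpha\,[z] = (\alpha_0 z,\,\alpha_1 z^p,\,\ldots,\,\alpha_{n-1} z^{p^{n-1}})$, so the $i$-th component has $v_R$-valuation at least $v_R(\alpha_i) + p^i v_R(z) > p^i c$, since $v_R(\alpha_i)\geq 0$. What remains is to show that the condition ``$v_R(x_i) > p^i c$ for all $i$'' is preserved under Witt addition. This is the technical heart, and I would reduce it to the weighted homogeneity of the universal sum polynomials $S_i(X_0, Y_0, \ldots, X_i, Y_i)$: assigning weight $p^k$ to $X_k$ and $Y_k$, the polynomial $S_i$ is weighted-homogeneous of degree $p^i$. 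This is proved by induction on $i$ from the recursion coming from the ghost components $w_i = \sum_{k\leq i} p^k X_k^{p^{i-k}}$, which are themselves weighted-homogeneous of degree $p^i$. Every monomial of $S_i(x,y)$ then has $v_R$-valuation strictly greater than $p^i c$, which closes the argument. The $\a_R^{\geqslant c}$ version is identical after replacing $>$ by $\geq$ throughout.

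Part (2) then follows. The inclusion $[\gamma] \in [\a_R^{\geqslant c}]$ is by definition, and conversely, given $w=(x_0,\ldots,x_{n-1}) \in [\a_R^{\geqslant c}]$, part (1) gives $v_R(x_i)\geq p^i c = v_R(\gamma^{p^i})$, so $u_i := x_i^{1/p^i}/\gamma \in R$ and the decomposition yields
$$w \;=\; \sum_i p^i\,[\gamma u_i] \;=\; [\gamma]\cdot \sum_i p^i\,[u_i] \;\in\; [\gamma]\,W_n(R).$$
The main obstacle is the weighted-homogeneity of the $S_i$; although a classical fact about Witt vector polynomials, it is the one step not directly visible from the coordinate description of $W_n(R)$, and must be checked before the argument goes through. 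Everything else is a direct computation built on the $\sum p^i[y_i]$ decomposition afforded by perfectness of $R$.
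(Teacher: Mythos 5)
Your proof is correct, but it handles the key point --- additive closure --- differently from what the paper's one-line hint intends, and it is worth seeing why the paper's route avoids your ``technical heart'' altogether. You decompose $(x_0,\ldots,x_{n-1})=\sum_i p^i[x_i^{1/p^i}]$ and then prove that the condition ``$v_R(x_i)>p^ic$ for all $i$'' is stable under Witt addition via the isobaric (weighted-homogeneous of weight $p^i$, with $X_k,Y_k$ of weight $p^k$) structure of the addition polynomials $S_i$; that is a correct classical fact and your ghost-component induction establishes it. The paper instead uses only the multiplication formula $[z](x_0,\ldots,x_{n-1})=(zx_0,z^px_1,\ldots,z^{p^{n-1}}x_{n-1})$: since $R$ is a valuation ring, this formula identifies the principal ideal $[\gamma]W_n(R)$ with $\{(x_i):v_R(x_i)\geq p^i c\}$ outright, which gives part (2) and the ``$\geqslant$'' case of part (1) simultaneously --- and a principal ideal is automatically closed under addition, so no analysis of the $S_i$ is needed. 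The ``$>$'' case then follows by writing $[\a_R^{>c}]=\bigcup_{c'>c}[\a_R^{\geqslant c'}]$ (an increasing union of principal ideals, using that $v_R$ takes all rational values $\geq 0$). So your argument buys self-containedness at the level of general Witt-vector identities, at the cost of the heaviest step; the paper's buys brevity by routing everything through principal ideals, at the cost of implicitly using the density of the value group of $R$. Both are valid proofs of the lemma.
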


\begin{proof}
Easy with the formula $[z] (x_0, \ldots, x_{n-1}) = (z x_0, z^p x_1,
\ldots, z^{p^{n-1}} x_{n-1})$.
\end{proof}

Since $[\a_R^{>c}]$ is stable under $\varphi$ and $G$-action, the
quotient $W_n(R) /[\a_R^{>c}]$ inherits a Frobenius action and it makes
sense to define:
\begin{equation}
\label{eq:jna}
J_{n,c}(\M) := \hom_{\gs,\varphi} (\M, W_n(R)/[\a_R^{>c}]).
\end{equation}
It is endowed with an action of $G_\infty$. Let's also denote
$J_{n,\infty} = \hom_{\gs,\varphi} (\M, W_n(R)) \simeq T_{\gs_n}(\M)$
(Lemma \ref{lem:tgsn}). Obviously, if $c \leq c' \leq \infty$, reduction
modulo $[\a_R^{>c}]$ defines a natural $G_\infty$-equivariant morphism
$\rho_{c',c} : J_{n,c'}(\M) \to J_{n,c}(\M)$. If $c \leq c' \leq c''
\leq \infty$, we have $\rho_{c'',c} = \rho_{c',c} \circ \rho_{c'',c'}$.

\begin{lemma}
$u$ is nilpotent in $W_n[u]/E(u)^r$.
\end{lemma}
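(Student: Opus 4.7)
The plan is to exploit the fact that $E(u)$ is Eisenstein of degree $e$ (being the minimal polynomial of a uniformizer $\pi$ over $W[1/p]$), which gives the congruence $E(u) \equiv u^e \pmod{p}$ in $W[u]$. Writing $E(u) = u^e + p\,h(u)$ for some $h(u) \in W[u]$, I would first deduce, either by raising to the $r$-th power or by using the factorization
\[ u^{er} - E(u)^r = (u^e - E(u)) \cdot \sum_{i=0}^{r-1} u^{e(r-1-i)}\, E(u)^i, \]
the congruence $u^{er} \equiv E(u)^r \pmod{p \cdot W[u]}$.

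The key step is then to pass to the quotient $A := W_n[u]/E(u)^r$. In $A$, the congruence above becomes $u^{er} \in pA$. Raising to the $n$-th power yields $u^{ern} \in p^n A$, and since $p^n = 0$ in $W_n$, this ideal vanishes. Hence $u^{ern} = 0$ in $W_n[u]/E(u)^r$, so $u$ is nilpotent, with an explicit exponent bounded by $ern$.

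I do not expect any serious obstacle: the argument is an elementary calculation using only the Eisenstein property of $E(u)$ together with the fact that $p^n = 0$ in $W_n$. None of the preceding machinery (Kisin modules, the rings $R$ or $W_n(R)$, the ideals $[\a_R^{>c}]$) is required for this particular lemma; it is a preparatory ring-theoretic fact that will presumably be used later to guarantee convergence or well-definedness of certain sums or maps valued in quotients of the form $W_n[u]/E(u)^r$.
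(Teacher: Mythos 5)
Your proposal is correct and follows essentially the same route as the paper: use the Eisenstein property to get $E(u)^r \equiv u^{er} \pmod{p}$, hence $u^{er} \in p\cdot\bigl(W[u]/E(u)^r\bigr)$, and then $u^{ern} \in p^n\cdot\bigl(W[u]/E(u)^r\bigr)$, which vanishes modulo $p^n$. The explicit exponent $N = ern$ you obtain is exactly the one the paper records after the lemma.
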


\begin{proof}
Since $E(u)$ is an Eisenstein polynomial, the congruence $E(u) \equiv
u^e \pmod p$ holds in $W[u]$. Hence $E(u)^r \equiv u^{er} \pmod p$,
which means that $u^{er}$ is divisible by $p$ in $W[u]/E(u)^r$. It
follows that $p^n$ divides $u^{ern}$ in $W[u]/E(u)^r$, \emph{i.e.}
$u^{ern}$ vanishes in $W_n[u]/E(u)^r$.
\end{proof}

Fix $N$ a positive integer such that $u^N = 0$ in $W_n[u]/E(u)^r$. By
previous proof one can take $N = ern$, but in many situations this
exponent can be improved. In the following subsection, we will examine
several examples. \emph{From now on, we put $b = \frac N {p-1}$ and $a =
b+N = \frac{pN}{p-1}$.}

\begin{prop}
\label{prop:prba}
The morphism $\rho_{\infty,b} : T_{\gs_n}(\M) \to J_{n,b }(\M)$ is
injective and its image is $\rho_{a,b }(J_{n,a}(\M))$.
\end{prop}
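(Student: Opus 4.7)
The plan is to introduce a valuation on $W_n(R)$ refining the filtration by the ideals $[\a_R^{>c}]$, and use it to prove both statements. By Proposition 2.3.2 of \cite{sec1:liu}, every $\M \in \Kmn$ is a quotient of some $\M' \in \Kfmn$; contravariance of $T_{\gs_n}$ and $J_{n,c}$ in $\M$ gives inclusions $T_{\gs_n}(\M) \hookrightarrow T_{\gs_n}(\M')$ and $J_{n,c}(\M) \hookrightarrow J_{n,c}(\M')$ compatible with all $\rho_{c',c}$. I will first treat the free case $\M \in \Kfmn$ and then descend.

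For $y = (y_0, \ldots, y_{n-1}) \in W_n(R)$ in the coordinates of Lemma \ref{lem:aRc}, set $v(y) := \min_i p^{-i} v_R(y_i)$. Using the Witt-vector formulas in characteristic $p$ one checks $v(y+z) \geq \min(v(y), v(z))$, $v(yz) \geq v(y) + v(z)$, $v(\varphi(y)) = p\, v(y)$, $v(u^k y) = k + v(y)$ (from Teichmuller multiplication and $v_R(\p) = 1$), and $v(\lambda) \geq 0$ for every $\lambda \in \gs$. By Lemma \ref{lem:aRc} one has $y \in [\a_R^{>c}]$ iff $v(y) > c$, and $v(y) \geq N$ implies that $u^N = [\p^N]$ divides $y$ in $W_n(R)$.

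Injectivity. In the free case, pick a basis $e_1,\ldots,e_d$ of $\M$ and matrices $A, B \in \t{M}_d(\gs_n)$ with $\varphi(e_j) = \sum_i A_{ij} e_i$ and $AB = E(u)^r I_d$; since $\det A$ is a non-zero-divisor in $\gs_n$ one also has $BA = E(u)^r I_d$. A morphism $f \in T_{\gs_n}(\M)$ is encoded by $\vec x = (f(e_j))_j \in W_n(R)^d$ satisfying $A^T \vec x = \varphi(\vec x)$. If $\rho_{\infty,b}(f) = 0$ then $\vec x \in [\a_R^{>b}]^d$, so $v_0 := \min_j v(x_j) > b$. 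Using that $N$ was chosen so that $u^N = 0$ in $W_n[u]/E(u)^r$, write $u^N = Q(u) E(u)^r$ with $Q(u) \in W_n[u]$. Applying $Q(u) B^T$ to both sides of $A^T \vec x = \varphi(\vec x)$ yields
\[
u^N \vec x \;=\; Q(u)\, B^T \varphi(\vec x).
\]
The left-hand side has $v$-valuation $N + v_0$, while the right-hand side has $v \geq p v_0$. Therefore $N + v_0 \geq p v_0$, so $v_0 \leq N/(p-1) = b$, contradicting $v_0 > b$; hence $f = 0$. For general $\M \in \Kmn$, injectivity follows by a diagram chase from the square of inclusions above.

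Image equality. The inclusion ``image of $\rho_{\infty,b}$ $\subset$ image of $\rho_{a,b}$'' is immediate from $\rho_{\infty,b} = \rho_{a,b} \circ \rho_{\infty,a}$. For the reverse, again assume $\M \in \Kfmn$ and let $f_a \in J_{n,a}(\M)$. Lift the coordinate vector of $f_a$ to $\vec x_0 \in W_n(R)^d$ and set $\vec e_0 := A^T \vec x_0 - \varphi(\vec x_0) \in [\a_R^{>a}]^d$. Iteratively define
\[
\vec y_{k+1} := -Q(u)\, B^T \vec e_k / u^N \in W_n(R)^d, \qquad \vec x_{k+1} := \vec x_k + \vec y_{k+1},
\]
the division being licit because $v(B^T \vec e_k) > a > N$. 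Using $A^T B^T = E(u)^r I_d$ together with $Q(u) E(u)^r = u^N$, a direct computation gives $A^T \vec y_{k+1} = -\vec e_k$, whence the new error satisfies $\vec e_{k+1} = -\varphi(\vec y_{k+1})$ and $v(\vec e_{k+1}) \geq p(v(\vec e_k) - N)$. Writing $v_k := v(\vec e_k)$, the recursion $v_{k+1} - a \geq p(v_k - a)$ (since $p(a-N) = a$) forces $v_k \to \infty$ exponentially, and so $v(\vec y_k) \to \infty$ as well. Since $W_n(R)$ is complete for the topology defined by $\{[\a_R^{>c}]\}_{c \geq 0}$, the series $\vec x_\infty := \vec x_0 + \sum_{k \geq 1} \vec y_k$ converges, satisfies $A^T \vec x_\infty = \varphi(\vec x_\infty)$ by continuity, and lies in $\vec x_0 + [\a_R^{>b}]^d$; hence $\rho_{\infty,b}(\vec x_\infty) = \rho_{a,b}(f_a)$. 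For general $\M$, pull $f_a$ back to $f'_a \in J_{n,a}(\M')$ via a surjection $\M' \twoheadrightarrow \M$ with kernel $K$, apply the free case to obtain $f'_\infty \in T_{\gs_n}(\M')$ lifting $\rho_{a,b}(f'_a)$; since $f'_a|_K = 0$, the restriction $f'_\infty|_K$ lies in $\ker(\rho_{\infty,b} : T_{\gs_n}(K) \to J_{n,b}(K))$, which is zero by the injectivity already proven, so $f'_\infty$ factors through $\M' \to \M$ to yield the desired $f_\infty$. The main technical hurdle is establishing the valuation calculus on $W_n(R)$ --- especially sub-multiplicativity under Witt multiplication in characteristic $p$ --- and verifying that the iterative correction $-Q(u) B^T \vec e_k / u^N$ is well-defined and produces an exponentially convergent series.
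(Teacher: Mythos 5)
Your proof is correct and follows essentially the same route as the paper's: the same filtration estimates from Lemma \ref{lem:aRc}, the same auxiliary matrix with $AB=E(u)^rI$ (which the paper combines with $Q(u)$ to get a $B$ satisfying $BA=u^N$ directly), a successive-approximation argument for the image statement (the paper writes the correction as $[\beta]Z$ and solves the equivalent fixed-point equation $Z=[\gamma]\varphi(Z)B+QB$ by the same contraction coming from $\varphi$ multiplying valuations by $p$), and the same d\'evissage through a free cover for a general torsion module. The only cosmetic differences are that you package the estimates as the valuation $v=\min_i p^{-i}v_R$ and get injectivity on the free cover in a single inequality, whereas the paper iterates $b_{i+1}=pb_i-N\to\infty$ directly on an arbitrary torsion Kisin module.
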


\begin{proof}

We first prove injectivity. Let $f : \M \to [\a_R^{>b}]$ be a
$\varphi$-morphism. We want to show that $f = 0$. First, remark that
since $\M$ is finitely generated, values of $f$ are in $[\a_R^{\geqslant
b'}]$ for some $b' > b$. Let $x \in \M$. By definition of $N$, $u^N x$
belongs to $E(u)^r \M$. By condition (\ref{eq:condition}) we can write
$u^N x = \lambda_1 \varphi(x_1) + \cdots + \lambda_k \varphi(x_k)$.
Applying $f$, we get:
$$u^N f(x) = \lambda_1 \varphi(f(x_1)) + \cdots + \lambda_k \varphi(f(x_k))
\in [\a_R^{> pb'}]$$
and then $f(x) \in [\a_R^{> pb'-N}]$ (since $u= [\p]$). Repeating the argument again and
again, we see that $f(\M) \subset \bigcap_{i \geq 0} [\a_R^{>b_i}] W_n(R)$
where $(b_i)$ is the sequence defined by $b_0 = b'$ and $b_{i+1} = p b_i
- N$. Now $b' > \frac N {p-1}$ implies $\lim\limits _{i \to \infty} b_i
= \infty$ and injectivity follows.

Let's prove the second part of the proposition. Since $\rho_{\infty,b}$
factors through $\rho_{a,b}$ we certainly have $\rho_{\infty,b }
(J_{n,\infty }) \subset \rho_{a,b }(J_{n,a})$. Conversely, we want to
prove that if $f : \M \to W_n(R)/[\a_R^{>a}]$ is a $\varphi$-morphism,
then there exists a $\varphi$-morphism (necessarily unique) $g : \M \to
W_n(R)$ such that $g \equiv f \pmod{[\a_R^{>b}]}$. Assume first $\M \in
\Kfmn$ and pick $(e_1, \ldots, e_d)$ a basis of $\M$ over $\gs_n$. Let
$A$  be a matrix with coefficients in $\gs_n$ such that:
$$(\varphi(e_1), \ldots, \varphi(e_d)) = (e_1, \ldots, e_d) A$$
and let $X$ be a line vector with coefficients in $W_n(R)$ that lifts
$(f(e_1), \ldots, f(e_d))$.

The commutation of $f$ and $\varphi$ implies the relation $XA =
\varphi(X) \pmod {[\a_R^{>a}]}$. Actually, the congruence holds in
$[\a_R^{\geqslant a'}]$ for some $a' > a$. For the rest of the proof,
fix $\alpha \in R$ some element of valuation $a'$. By Lemma
\ref{lem:aRc}.(2), $[\a_R^{\geqslant a'}]$ is the principal ideal generated
by $[\alpha]$.  Therefore, one have $ XA-\varphi(X) = [\alpha] Q$ with
coefficients of $Q$ in $W_n(R)$. We want to prove that there exists a matrix $Y$
with coefficients in $[\a_R^{>b}]$ such that $ (X + Y)A = \varphi(X +
Y)$. Let us search $Y$ of the shape $[\beta] Z$ with $\beta =
\frac{\alpha}{u^N}$ (which belongs to $R$ because of valuations) and
coefficients of $Z$ in $W_n(R)$. Our condition then becomes:
\begin{equation}
\label{eq:eqY}
[\beta] ZA = [\beta^p] \varphi(Z) + [\alpha] Q
\end{equation}
Using condition (\ref{eq:condition}) and $u^N \in E(u)^r W_n[u]$, we
find a matrix $B$ (with coefficients in $\gs_n$) such that $B A =
u^N$. Multiplying (\ref{eq:eqY}) by $B$ on the left and simplifying by
$[\alpha]$, we get the new equation:
\begin{equation}
\label{eq:eqY2}
Z = [\gamma] \varphi(Z) B  +  Q B
\end{equation}
with $\gamma = \alpha^{p-1}/{u^{pN}}$. Remark that $v_R(\gamma) =
a'(p-1) - N > 0$ ; hence $\gamma \in R$. Now define a sequence $(Z_i)$
by $Z_0 = 0$ and $Z_{i+1} = [\gamma]  \varphi(Z_i) B +  Q B$. We have
$Z_{i+1} - Z_i = [\gamma]  \varphi(Z_i - Z_{i-1}) B $. Since $v_R(\gamma)
> 0$, $Z_{i+1} - Z_i$ goes to $0$ for the $u$-adic topology (which is
separate and complete on $W_n(R)$) when $i$ goes to infinity. Hence
$(Z_i)$ converges to a limit $Z$ which is solution of (\ref{eq:eqY2}).

Finally, if $\M$ is just an object of $\Kmn$ consider $\M' \in \Kfmn$
and a surjective map $f : \M' \to \M$. Then $\ker f$ is in $\Kmn$ and
sits in the following diagram:
$$\xymatrix @R=15pt {
0 \ar[d] & & 0 \ar[d] \\
T_{\gs_n}(\M) \ar[r] \ar[d] & J_{n,a}(\M) \ar[r] \ar[d] & J_{n,b}(\M)
\ar[d] \\
T_{\gs_n}(\M') \ar[r] \ar[d] & J_{n,a}(\M') \ar[r] & J_{n,b}(\M')
\ar[d] \\
T_{\gs_n}(\ker f) \ar@{^(->}[rr] & & J_{n,b}(\ker f) }$$
where all columns are exact (by left exactness of $\hom$) and the map
on last line is injective (by first part of proposition). An easy diagram
chase then ends the proof.
\end{proof}

\begin{remark}
In general, $\rho_{a,b}$ is not surjective (nor injective) even for $a$
and $b$ big enough. Counter examples are very easy to produce: for
instance, $\M = \gs_1 \me$ equipped with $\varphi(\me) = E(u)^r \me$ is
convenient.
\end{remark}

\subsection{Brief discussion about sharpness of $N$}
\label{subsec:sharpness}

Here we are interested in finding integers $N$ (as small as possible)
such that $u^N = 0$ in $W_n[u]/E(u)^r$. As we have said before $N = ern$
is always convenient. If $n=1$, it is obviously the best constant.
However, it is not true anymore for bigger $n$: the three following
lemmas could give better exponents in many cases. We do not know how to
find the sharpest $N$ in general.

In this paragraph, we will denote by $\lceil x \rceil$ the smallest
integer not less than $x$.

\begin{lemma}
We have $u^N = 0$ in $W_n[u]/E(u)^r$ for $N = e p^{n-1} \lceil \frac r
{p^{n-1}} \rceil$.
\end{lemma}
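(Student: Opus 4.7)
The plan is to sharpen the congruence $E(u) \equiv u^e \pmod p$ by raising to $p$-th powers. More precisely, I would first prove by induction on $k \geq 0$ that
\[
E(u)^{p^k} \equiv u^{ep^k} \pmod{p^{k+1}} \quad \text{in } W[u].
\]
The base case $k=0$ is just the fact that $E$ is Eisenstein. For the inductive step, assuming $E(u)^{p^k} = u^{ep^k} + p^{k+1} H_k(u)$ for some $H_k \in W[u]$, I would expand
\[
E(u)^{p^{k+1}} = \bigl(u^{ep^k} + p^{k+1} H_k(u)\bigr)^p = u^{ep^{k+1}} + \sum_{j=1}^{p} \binom{p}{j} u^{ep^k(p-j)} p^{j(k+1)} H_k(u)^j,
\]
and observe that for $1 \leq j \leq p-1$ we have $v_p\bigl(\binom{p}{j} p^{j(k+1)}\bigr) \geq 1 + j(k+1) \geq k+2$, while for $j=p$ we have $p^{p(k+1)}$, whose valuation is $p(k+1) \geq k+2$ as soon as $p \geq 2$. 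Hence $E(u)^{p^{k+1}} \equiv u^{ep^{k+1}} \pmod{p^{k+2}}$, completing the induction.

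Specializing at $k = n-1$ gives $E(u)^{p^{n-1}} \equiv u^{ep^{n-1}} \pmod{p^n}$. Setting $r' = \lceil r/p^{n-1} \rceil$, raising this congruence to the $r'$-th power and expanding the binomial (each cross term contains at least one factor of $p^n$) yields
\[
E(u)^{r' p^{n-1}} \equiv u^{er' p^{n-1}} = u^N \pmod{p^n}.
\]
On the other hand, since $r' p^{n-1} \geq r$ by definition of $r'$, the element $E(u)^{r' p^{n-1}}$ is divisible by $E(u)^r$ in $W[u]$, hence vanishes in $W_n[u]/E(u)^r$. Combining the two observations, $u^N = 0$ in $W_n[u]/E(u)^r$, as desired.

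There is no real obstacle here: the only subtle point is the valuation estimate $v_p\bigl(\binom{p}{j} p^{j(k+1)}\bigr) \geq k+2$, which is a straightforward elementary check (using $v_p\bigl(\binom{p}{j}\bigr) = 1$ for $1 \leq j \leq p-1$). Once the congruence $E(u)^{p^{n-1}} \equiv u^{ep^{n-1}} \pmod{p^n}$ is established, the rest is a single application of the definition of $r'$.
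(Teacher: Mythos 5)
Your proof is correct and follows exactly the paper's approach: the paper's entire proof is the one-line remark that $E(u)^{p^{n-1}} \equiv u^{ep^{n-1}} \pmod{p^n}$, and you have simply supplied the (routine but valid) induction and binomial estimates that justify it.
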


\begin{proof}
Just remark that $E(u)^{p^{n-1}} \equiv u^{e p^{n-1}} \pmod {p^n}$.
\end{proof}

\begin{lemma}
Assume $E(u) = u^e - p$. Then $u^N = 0$ in $W_n[u]/E(u)^r$ for $N =
e(n+r-1)$.
\end{lemma}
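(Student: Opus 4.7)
The plan is a direct application of the binomial theorem, made possible by the very simple form of $E(u)$. Since $E(u) = u^e - p$, we have the identity $u^e = p + E(u)$ in $W[u]$, so raising to the power $k = n+r-1$ yields
$$u^{e(n+r-1)} \;=\; (p + E(u))^{n+r-1} \;=\; \sum_{j=0}^{n+r-1} \binom{n+r-1}{j}\, p^{\,n+r-1-j}\, E(u)^{j}.$$

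I then reduce this identity modulo $p^n$ and modulo $E(u)^r$, i.e.\ work in $W_n[u]/E(u)^r$, and inspect each summand. If $j \geq r$, the factor $E(u)^{j}$ vanishes. If instead $j \leq r-1$, then the complementary exponent satisfies $n+r-1-j \geq n$, so $p^{n+r-1-j}$ vanishes. These two cases together cover every index $j \in \{0,1,\ldots,n+r-1\}$, so the whole sum is zero and we conclude that $u^{e(n+r-1)} = 0$ in $W_n[u]/E(u)^r$, which gives the claimed value $N = e(n+r-1)$.

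There is essentially no obstacle: the content of the lemma is the numerical matching $n+r-1 = (r-1) + n$, which is exactly what makes the two vanishing conditions (``killed by $E(u)^{r}$'' on large $j$ and ``killed by $p^{n}$'' on small $j$) tile the range of summation with no gap. The only point to remark is that this argument really uses $E(u) = u^e - p$ and not merely its reduction mod $p$; for a general Eisenstein $E(u)$ one would get extra lower order terms involving $u$, and a straight binomial expansion would not suffice.
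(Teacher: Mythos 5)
Your proof is correct and rests on the same computation as the paper's: expanding $u^{e(n+r-1)}$ in powers of $E(u)$ and noting that every coefficient of $E(u)^j$ with $j\le r-1$ is divisible by $p^{n+r-1-j}$, hence by $p^n$. The paper packages this as a Taylor expansion at $p$ via an explicit isomorphism $W[1/p][u]/E(u)^r\simeq K^r$ (after the substitution $v=u^e$), machinery it reuses in the next lemma for general Eisenstein $E(u)$; your direct binomial expansion is a cleaner route to this particular case.
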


\begin{remark}
If $K/W[1/p]$ is tamely ramified, up to changing $K$ by an unramified
extension, we can always select an uniformizer whose minimal polynomial
is $E(u) = u^e - p$.
\end{remark}

\begin{proof}
Up to performing the variables change $v = u^e$, one may assume $e=1$.
We then have an isomorphism $f : K[u]/E(u)^r \to K^r$, $P \mapsto
(P(p), P'(p), \ldots, \frac{P^{(r-1)}(p)}{(r-1)!})$ whose inverse is
given by $f^{-1}(x_0, \ldots, x_{r-1}) = x_0 + x_1 (u-p) + \cdots +
x_{r-1} (u-p)^{r-1}$. In particular $f(W[u]/E(u)^r) \supset W^r$.
Moreover:
$$f(u^N) = \left( p^N, N p^{N-1}, \ldots, \binom N {r-1} p^{N-r+1}
\right) \in p^{N-r+1} W^r = p^n W^r.$$
Conclusion follows.
\end{proof}

\begin{lemma}\label{sec1:sharp}
There exists a constant $c$ depending only on $K$ such that $u^N = 0$ in
$W_n[u]/E(u)^r$ for $N = en + c(r-1)$.
\end{lemma}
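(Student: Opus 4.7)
The plan is to reuse the binomial-expansion idea underlying the previous lemma, but to apply it directly to a general Eisenstein polynomial rather than only to $u^e - p$. The key observation is that being Eisenstein means $E(u) = u^e + a_{e-1}u^{e-1} + \cdots + a_1 u + a_0$ with $p \mid a_i$ for every $i < e$. Setting $w(u) = -(a_{e-1}u^{e-1} + \cdots + a_0)/p \in W[u]$, which is well-defined thanks to the Eisenstein hypothesis, one has the identity
\[
u^e \;=\; E(u) + p\,w(u)
\]
in $W[u]$. This is the only structural input needed.

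With this in hand, I would take $m = n + r - 1$ and expand
\[
u^{em} \;=\; \bigl(E(u) + p\,w(u)\bigr)^m \;=\; \sum_{k=0}^{m}\binom{m}{k}\,E(u)^{k}\,\bigl(p\,w(u)\bigr)^{m-k}
\]
by the binomial theorem. Modulo $E(u)^r$, every term with $k \geq r$ vanishes, and every surviving term with $k \leq r-1$ carries a factor $p^{m-k}$ where $m - k \geq m - r + 1 = n$. Hence $u^{e(n+r-1)} \equiv 0 \pmod{p^n,\,E(u)^r}$, that is, $u^{e(n+r-1)}$ is zero in $W_n[u]/E(u)^r$. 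Rewriting $N = e(n+r-1) = en + e(r-1)$ exhibits the required bound with $c = e$, a constant manifestly depending only on $K$.

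There is no real obstacle here; the argument is essentially a rereading of the previous lemma with the identity $u^e - p = E(u)$ replaced by $u^e - p\,w(u) = E(u)$. The only subtle point worth flagging is that the Eisenstein hypothesis is exactly what is needed for $w(u)$ to have coefficients in $W$ rather than merely in $W[1/p]$; without integrality of $w(u)$ the binomial expansion would not produce integral congruences modulo $p^n$. Finally, the choice $c = e$ is consistent with the sharpness computation in the case $E(u) = u^e - p$ treated by the previous lemma, so no improvement in $c$ is available from this purely formal argument.
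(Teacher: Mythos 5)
Your proof is correct, and it takes a genuinely different route from the paper. The paper works with the ``jet'' isomorphism $f : W[1/p][u]/E(u)^r \to K^r$, $P \mapsto (P(\pi), P'(\pi), \ldots, P^{(r-1)}(\pi)/(r-1)!)$, and the whole difficulty there is to control the denominators of $f^{-1}$ on $\O_K^r$; this is done by inverting $E'(u)$, which brings in the different of $K/W[1/p]$ and produces the constant $c = ev+1$ with $p^v$ killing the denominators of $E'(\pi)^{-1}$. Your argument bypasses all of this: writing $u^e = E(u) + p\,w(u)$ with $w \in W[u]$ (integrality being exactly the Eisenstein condition, as you correctly flag) and expanding $u^{em} = (E(u)+p\,w(u))^m$ binomially, every term either lies in $E(u)^r W[u]$ or carries $p^{m-k}$ with $m-k \geq n$ once $m = n+r-1$. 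This is both shorter and sharper: it gives $c = e$ uniformly, whereas the paper's $c = ev+1$ satisfies $v \geq 1$ as soon as $e \geq 2$ (since $v_p(\D_{K/W[1/p]}) \geq \frac{e-1}{e} > 0$), so your constant is at least as good and strictly better in the ramified case. It also subsumes the preceding lemma for $E(u) = u^e - p$ verbatim (there $w = 1$). Your closing remark that ``no improvement in $c$ is available'' is the only place to be careful --- the argument shows $c=e$ suffices, not that it is optimal --- but this does not affect the validity of the proof.
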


\begin{proof}
The general plan of the proof is very similar to the previous one. We
first consider the map $f : W[1/p][u]/E(u)^r \to K^r$, $P \mapsto (P(\pi),
P'(\pi), \ldots, \frac{P^{(r-1)}(\pi)}{(r-1)!})$. It is $W[1/p]$-linear
and injective. Since both sides are $W[1/p]$-vector spaces of dimension
$er$, $f$ is an isomorphism. Denote by $\varpi \in W[1/p][u]/E(u)^r$ the
preimage of $(\pi, 0, \ldots, 0)$. The inverse of $f$ is then given by
the formula:
$$f^{-1}(x_0, \ldots, x_{r-1}) = X_0(\varpi) + X_1(\varpi) (u-\varpi) +
\cdots + X_{r-1}(\varpi) (u-\varpi)^{r-1}$$
where $X_i$ are polynomials with coefficients in $K$ such that $X_i(\pi)
= x_i$.
Second, we would like to bound below the ``$p$-adic
valuation'' of $f^{-1}(x_0, \ldots, x_{r-1})$ when all $x_i$ lies in
$\O_K$. For that, we remark that $E(\varpi)$ is mapped to $0$ by
$f$; hence it vanishes. Solving this equation by successive
approximations, we find that $\varpi$ can be written $P_0(u) + P_1(u)
E(u) + \cdots + P_{r-1}(u) E(u)^{r-1}$ with $P_0(u) = u$ and:
$$E'(u) P_{i+1}(u) \equiv \frac {E(P_0(u) + P_1(u) E(u) + \cdots +
P_{i-1}(u) E(u)^{i-1})} {E(u)^i} \pmod {E(u)}$$
where $P_i$ are uniquely determined modulo $E(u)^{r-i}$. Let $F(u) \in
W[1/p][u]/E(u)$ be the inverse of $E'(u)$ and $v$ an integer such that $p^v
F(u) \in W[u]/E(u) \simeq \O_K$. (Note that $v = \lceil v_p(\D_{K/W[1/p]})
\rceil$ is convenient.) By induction we easily prove that $p^{iv} P_i(u)
\in W[u]/E(u)^{r-i}$, and then that $Q(\varpi) \in W[u]/E(u)^r$ for all
$Q \in p^{(r-1)v} W[u]$. Consequently $f(W[u]/E(u)^r) \supset
p^{(r-1)v} \O_K^r$. Finally, defining $c = ev+1$ and $N = en + c(r-1)$,
we have:
$$f(u^N) = \left( \pi^N, N \pi^{N-1}, \ldots, \binom N {r-1} \pi^{N-r+1}
\right) \in \pi^{N-r+1} \cdot \O_K^r \subset p^{(r-1)v + n} \cdot
\O_K^r$$
and we are done.
\end{proof}

\subsection{Some quotients of $W_n(R)$}

The aim of this last subsection is to study the structure of quotients
$W_n(R)/[\a_R^{>c}]$ that appears in the definition of $J_{n,c}$ (see
formula (\ref{eq:jna})). It will allow us to derive interesting
corollaries about the prolongation to a finite index subgroup of $G$ of
the natural action of $G_\infty$ on $T_{\gs_n}(\M)$.

For a non negative integer $s$, let us denote by $\theta_s$ the ring
morphism $R \to \O_{\bar K}/p$, $x = (x^{(0)}, x^{(1)}, \ldots) \mapsto
x^{(s)}$. We emphasize that it is not $k$-linear: it induces a morphism
of $k$-algebras between $R$ and $k \otimes_{k, \sigma^s} \O_{\bar K}/p$.
For a non negative real number $c$, define:
$$\a_{\bar K}^{>c} = \{x \in \bar K / v_K(x) > c\} \subset \O_{\bar
K}.$$

\begin{lemma}
\label{lem:rz}
Let $c$ be a positive real number. For any integer $s > \log_p (\frac
c e)$, the map $\theta_s$ induces a Galois equivariant isomorphism of
$k$-algebras
$$R/\a_R^{>c} \to k \otimes_{k, \sigma^s} \O_{\bar K}/
\a_{\bar K}^{>c/p^s}.$$
\end{lemma}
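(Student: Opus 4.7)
The plan is to reduce everything to the elementary valuation identity $v_R(x) = p^s v_K(\hat x_s)$, where $\hat x_s \in \O_{\bar K}$ is any lift of $x^{(s)} \in \O_{\bar K}/p$. First observe that the hypothesis $s > \log_p(c/e)$ is exactly $e > c/p^s$, hence $p \O_{\bar K} \subset \a_{\bar K}^{>c/p^s}$; consequently, the class of $\hat x_s$ in $\O_{\bar K}/\a_{\bar K}^{>c/p^s}$ does not depend on the choice of lift, so $\theta_s$ descends to a well-defined ring homomorphism $\bar\theta_s : R \to \O_{\bar K}/\a_{\bar K}^{>c/p^s}$.

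For injectivity on the quotient $R/\a_R^{>c}$, I would argue as follows. The defining limit of $v_R$ ensures $v_K(\hat x_s) = v_R(x)/p^s$ for the canonical lifts (obtained by the usual $\lim_t (\widetilde{x^{(s+t)}})^{p^t}$ construction). Hence $\bar\theta_s(x) = 0$ if and only if $v_K(\hat x_s) > c/p^s$, if and only if $v_R(x) > c$, which is precisely the condition $x \in \a_R^{>c}$.

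For surjectivity, given $a \in \O_{\bar K}$, one builds a preimage $x = (x^{(t)})_{t \geq 0} \in R$ by setting $x^{(t)} := a^{p^{s-t}} \bmod p$ for $t \leq s$, and by choosing for $t > s$ a compatible system of $p^{t-s}$-th roots of $a$ in $\bar K$ and reducing them modulo $p$ (the existence of such roots is guaranteed by the algebraic closedness of $\bar K$). Galois equivariance is immediate, since the $G$-action on $R$ is componentwise and $G$ acts naturally on $\O_{\bar K}$.

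The only genuinely subtle point is the $k$-algebra structure. The embedding $k \hookrightarrow R$ sends $\lambda$ to the sequence $(\sigma^{-s}(\lambda))_{s \geq 0}$, so $\theta_s(\lambda) = \sigma^{-s}(\lambda)$ and $\bar\theta_s$ is $\sigma^{-s}$-semilinear rather than $k$-linear. The base change $k \otimes_{k, \sigma^s}(\O_{\bar K}/\a_{\bar K}^{>c/p^s})$ is designed precisely to absorb this Frobenius twist and promote $\bar\theta_s$ to a genuine $k$-algebra isomorphism. I expect the only real bookkeeping obstacle to be tracking the $\sigma^{\pm s}$ in this last step; the geometric content of the lemma is entirely contained in the valuation equivalence and in the existence of iterated $p$-th roots in $\bar K$.
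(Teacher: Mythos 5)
Your proof is correct and follows the same route as the paper's (which is a two-line argument): surjectivity is immediate from the existence of iterated $p$-th roots in $\bar K$, and the kernel computation reduces to the equivalence $v_R(x) > c \iff v_K(x^{(s)}) > c/p^s$, which holds precisely because the hypothesis $s > \log_p(c/e)$ gives $c/p^s < e$. Your extra remarks on well-definedness and on the $\sigma^{-s}$-semilinearity absorbed by the twist $k \otimes_{k,\sigma^s}(\cdot)$ are correct and correspond to the paper's discussion immediately preceding the lemma.
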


\begin{proof}
The map is clearly surjective. It remains to show that
$x = (x^{(0)}, x^{(1)}, \ldots)$ has valuation greater than $c$ if
and only if $v_K(x^{(s)}) > \frac c {p^s}$, which follows directly
from $\frac c{p^s} < e$.
\end{proof}

\begin{prop}
\label{prop:wnrz}
Let $c$ be a positive real number. For any $s > n-1 + \log_p (\frac c
e)$, $\theta_s$ induces a Galois equivariant isomorphism of
$W_n(k)$-algebras:
$$\frac{W_n(R)}{[\a_R^{>c}]} \to W_n(k) \otimes_{W_n(k), \sigma^s}
\frac{W_n(\O_{\bar K}/p)}{ [\a_{\bar K}^{>c/p^s}] }.$$
\end{prop}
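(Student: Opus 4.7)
The plan is to reduce the statement to Lemma \ref{lem:rz} one Witt coordinate at a time, exploiting the diagonal description of the ideals provided by Lemma \ref{lem:aRc}.

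First I would promote $\theta_s$ to a ring map $W_n(\theta_s): W_n(R) \to W_n(\O_{\bar K}/p)$ by functoriality of Witt vectors. Since $v_K \circ \theta_s = v_R/p^s$, Teichm\"uller lifts of elements of $\a_R^{>c}$ are sent to Teichm\"uller lifts of elements of $\a_{\bar K}^{>c/p^s}$, so $W_n(\theta_s)$ carries $[\a_R^{>c}]$ into $[\a_{\bar K}^{>c/p^s}]$ and descends to a ring morphism modulo these ideals. As $\theta_s$ restricts to the $p^s$-th power on $k$, this induced map is $\sigma^s$-semi-linear over $W_n(k)$; post-composing with $a \mapsto 1\otimes a$ therefore produces a genuine $W_n(k)$-algebra morphism into the twisted tensor product. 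Galois equivariance is automatic because $G$ commutes with $\theta_s$ and acts trivially on $W_n(k)$.

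For bijectivity I would describe both sides in Witt coordinates. Lemma \ref{lem:aRc}(1) identifies $W_n(R)/[\a_R^{>c}]$ set-theoretically with $\prod_{i=0}^{n-1} R/\a_R^{>p^i c}$. The proof of that lemma is formal---it uses only the identity $[z](x_0,\ldots,x_{n-1})=(zx_0,z^p x_1,\ldots,z^{p^{n-1}}x_{n-1})$ together with Witt additivity---and applies verbatim with $\O_{\bar K}/p$ in place of $R$, producing the parallel identification of $W_n(\O_{\bar K}/p)/[\a_{\bar K}^{>c/p^s}]$ with $\prod_{i=0}^{n-1}(\O_{\bar K}/p)/\a_{\bar K}^{>p^i c/p^s}$. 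Under these descriptions our map is just componentwise $\theta_s$. At each index $i$, Lemma \ref{lem:rz} supplies an isomorphism $R/\a_R^{>p^i c} \xrightarrow{\sim} k\otimes_{k,\sigma^s}(\O_{\bar K}/p)/\a_{\bar K}^{>p^i c/p^s}$ as soon as $s > i + \log_p(c/e)$; the strictest instance is $i=n-1$, which is precisely the hypothesis. Assembling these coordinate-wise isomorphisms is legitimate because $k$ is perfect, so $\sigma^s$ is bijective on $W_n(k)$ and the functor $W_n(k)\otimes_{W_n(k),\sigma^s}(-)$ only twists the $W_n(k)$-structure without altering the underlying ring.

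The main delicate point I anticipate is checking that the coordinate-wise characterization of $[\a_{\bar K}^{>c/p^s}] \subset W_n(\O_{\bar K}/p)$ still holds despite $\O_{\bar K}/p$ containing nilpotents and not being a valuation ring in the strict sense. However, the Witt-vector arithmetic underlying Lemma \ref{lem:aRc} is purely formal and carries over verbatim, so beyond this bookkeeping the argument is a straightforward coordinate-by-coordinate application of Lemma \ref{lem:rz}.
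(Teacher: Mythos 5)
Your construction of the induced map, the equivariance argument, and the idea of reducing to Lemma \ref{lem:rz} one Witt coordinate at a time (with the binding constraint occurring at $i=n-1$) all match the paper's proof. But the step you lean on for bijectivity --- that Lemma \ref{lem:aRc}(1) ``identifies $W_n(R)/[\a_R^{>c}]$ set-theoretically with $\prod_{i=0}^{n-1} R/\a_R^{>p^i c}$'' so that the map becomes ``componentwise $\theta_s$'' --- is not correct. Lemma \ref{lem:aRc}(1) characterizes when a single Witt vector \emph{lies in} the ideal; it does not say that two Witt vectors are congruent modulo $[\a_R^{>c}]$ exactly when their components are congruent modulo $\a_R^{>p^ic}$, and that stronger statement fails because Witt addition is not componentwise. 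Concretely, take $n=2$ and $z\in R$ with $c< v_R(z)\le pc$. Then $x=1+[z]$ and $y=1$ differ by $[z]\in[\a_R^{>c}]$, yet $x_1=-\frac 1p\sum_{0<j<p}\binom pj z^j$ has valuation exactly $v_R(z)\le pc$ while $y_1=0$, so $x_1\not\equiv y_1 \pmod{\a_R^{>pc}}$. Hence the coordinatewise reduction $W_n(R)\to\prod_i R/\a_R^{>p^ic}$ does not factor through $W_n(R)/[\a_R^{>c}]$, and the product decomposition you assemble your isomorphism from does not exist.

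Fortunately the proposition does not need that decomposition. The induced map on quotients is a ring homomorphism, and it is surjective because $\theta_s$, hence $W_n(\theta_s)$, is; so bijectivity reduces to the single containment $W_n(\theta_s)^{-1}\bigl([\a_{\bar K}^{>c/p^s}]\bigr)\subseteq[\a_R^{>c}]$. That is a statement about ideal \emph{membership} only, where the coordinatewise criterion of Lemma \ref{lem:aRc}(1) and its analogue over $\O_{\bar K}/p$ (your remark that the thresholds $p^ic/p^s$ stay below $e$ is exactly the bookkeeping needed for that analogue) apply legitimately: from $v_K(x_i^{(s)})>p^ic/p^s$ for all $i$ one gets $v_R(x_i)>p^ic$ by the valuation comparison underlying Lemma \ref{lem:rz}, using the hypothesis on $s$ precisely for $i=n-1$, whence $x\in[\a_R^{>c}]$. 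This kernel computation is how the paper argues, and your proposal becomes correct once the false product identification is replaced by it.
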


\begin{proof}
Since $\theta_s$ is surjective,  the map considered in the lemma is also
surjective. Let $x = (x_0, \ldots, x_{n-1}) \in W_n(R)$ and assume that
$x^{(s)} = (x_0^{(s)}, \ldots, x_{n-1}^{(s)})$ lies in $[\a_{\bar K}
^{>c/p^s}]$. By an analogue of Lemma \ref{lem:aRc}.(1), one obtain
$v_K(x_i^{(s)}) > \frac c {p^{i-s}}$ for all $i$. Hence, $x_i^{(s)}$ is in
$\a_{\bar K}^{>cp^i/p^s}$. Since $\log_p(\frac{c p^i}{e}) = i + \log_p(\frac{c}{e}) \leq
n-1 + \log_p(\frac{c}{e}) < s$, we can apply Lemma \ref{lem:rz} and deduce $x_i
\in [\a_R^{>c p^i}]$, \emph{i.e.} $v_R(x_i) > c p^{n-1+i}$ for all $i$.
By Lemma \ref{lem:aRc}.(1), it follows that $x \in [\a_R^{>c}]$. Thus,
the map of the proposition is injective and we are done.
\end{proof}

Define increasing functions $s_0$ and $s_1$ by $s_0(c) = n-1 +
\log_p(\frac c e)$ and $s_1 (c) = n - 1 + \log_p(\frac {c(p-1)}{ep}) =
s_0(c) + \log_p(1 - \frac 1 p)$. Recall that we have defined $a = \frac
{pN} {p-1}$ (where $N$ is an integer such that $u^N = 0$ in
$W_n[u]/E(u)$) and set finally $\smin = s_1(a) = n - 1 + \log_p(\frac N
e)$. If we choose $N = ern$, we just have $\smin = n-1 + \log_p(rn)$.

\begin{prop}
\label{prop:prolong}
Let $n$ be a positive integer and $\M \in \Kmn$. For any non negative
integer $s > s_1 (c)$, the  natural action of $G_s$ on $W_n(R)$ turns
$J_{n,c}(\M)$ into a $\Z_p[G_s]$-module. Furthermore, we have the
following compatibilities:
\begin{itemize}
\item the action of $G_s$ is compatible with the usual action of
$G_\infty$ on $J_{n,c}(\M)$;
\item if $s' \geq s \geq s_1(c)$, actions of $G_{s'}$ and $G_s$ on
$J_{n,c}$ are compatible each other;
\item if $c' \geq c$ and $s \geq s_1(c')$, then $\rho_{c',c} :
J_{n,c'}(\M) \to J_{n,c}(\M)$ is $G_s$-equivariant.
\end{itemize}
\end{prop}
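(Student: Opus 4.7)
The whole argument reduces to a single claim: for every $g \in G_s$ with $s > s_1(c)$, the element $g(u) - u \in W_n(R)$ lies in the ideal $[\a_R^{>c}]$. Granted this, the prescription $(g \cdot f)(x) := g(f(x))$, using the natural $G$-action on $W_n(R)/[\a_R^{>c}]$, produces a well-defined element of $J_{n,c}(\M)$: compatibility with $\varphi$ is automatic since $\varphi$ and the Galois action commute on $W_n(R)$, and $\gs$-linearity reduces to $g$ fixing $W(k) \subset \gs$ (automatic) and $u$ (by the claim). The three compatibility items are then formal: all the $G_s$-actions (varying $s$) and the $G_\infty$-action arise from the same formula applied to the same natural $G$-action on $W_n(R)$, and $\rho_{c',c}$ is induced by the $G$-equivariant reduction $W_n(R)/[\a_R^{>c'}] \twoheadrightarrow W_n(R)/[\a_R^{>c}]$.

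To prove the claim, define $\zeta_g \in R^\times$ by $g(\p) = \zeta_g \cdot \p$, so that $g(u) = g([\p]) = [\zeta_g]\,u$ and $g(u) - u = ([\zeta_g] - 1)\,u$. Since $g \in G_s$ fixes $\pi_k$ for $k \leq s$ and sends $\pi_{s+k}$ to $\omega_k\, \pi_{s+k}$ for some $p^k$-th root of unity $\omega_k$ ($k \geq 1$), the standard valuation formula $v_K(\omega - 1) = e/((p-1)p^{k-1})$ for a primitive $p^k$-th root of unity $\omega$ (in the convention $v_K(\pi) = 1$), combined with $v_R(\xi) = \lim_\ell p^\ell v_K(\xi^{(\ell)})$, yields $v_R(\zeta_g - 1) \geq e\, p^{s+1}/(p-1)$. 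Expand $[\zeta_g] - 1 = (\eta_0, \ldots, \eta_{n-1})$ in Witt coordinates; each $\eta_i$ is a universal polynomial in $\zeta_g$ that vanishes at $\zeta_g = 1$ with a $p$-adic unit linear term, hence $v_R(\eta_i) \geq v_R(\zeta_g - 1) \geq e\, p^{s+1}/(p-1)$. By Lemma \ref{lem:aRc}, $([\zeta_g]-1)\cdot u = (\p\,\eta_0, \p^p\,\eta_1, \ldots, \p^{p^{n-1}}\,\eta_{n-1})$, and its $i$-th Witt coordinate has $v_R \geq e\, p^{s+1}/(p-1) + p^i$.

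Applying Lemma \ref{lem:aRc} once more, membership in $[\a_R^{>c}]$ requires $v_R$ of each Witt coordinate to exceed $p^i c$, i.e., $c < 1 + e\, p^{s+1-i}/(p-1)$ for every $i \in \{0, \ldots, n-1\}$. The binding case $i = n-1$ reads $c < 1 + e\, p^{s-n+2}/(p-1)$, which is strictly weaker than the hypothesis $c < e\, p^{s-n+2}/(p-1)$ (the explicit form of $s > s_1(c)$), so the claim holds. The main technical hurdle is justifying the unit-linear-term assertion for each $\eta_i$: for $i = 1$ it reduces to the combinatorial identity $\sum_{j=1}^{p-1} j(-1)^j \binom{p}{j}/p = 1$, and analogous identities for higher $i$ are precisely what forces the ramification index $e$ to appear in $s_1(c)$ in the form stated.
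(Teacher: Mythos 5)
Your proof is correct, and it reaches exactly the same numerical bound as the paper (the $i$-th Witt coordinate of $g(u)-u$ has $v_R \geq \frac{ep^{s+1}}{p-1}+p^i$, which is the paper's $v_K(x_i)=\frac e{p-1}+\frac 1{p^{s+1-i}}$ rescaled by $p^{s+1}$), but the implementation differs in a worthwhile way. The paper first transports the whole problem to $W_n(\O_{\bar K}/p)/[\a_{\bar K}^{>c/p^{s'}}]$ via Proposition \ref{prop:wnrz}, reduces to the single case where $g\pi_{s+1}/\pi_{s+1}$ is a primitive $p$-th root of unity, and computes the Witt coordinates of $[g\pi_{s+1}]-[\pi_{s+1}]$ by solving the phantom-component system. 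You instead stay inside $W_n(R)$, factor $g(u)-u=([\e(g)]-1)\,u$ with $\e(g)=g(\p)/\p$, and control the coordinates of $[\e(g)]-1$ through the universal Witt addition polynomials; this treats all $g\in G_s$ uniformly, needs no case distinction on $s'$ versus $s$, and does not invoke Proposition \ref{prop:wnrz} at this stage (it is still needed later in the paper, but not for this proposition). One simplification you should make: the ``unit linear term'' of each $\eta_i$, which you single out as the main technical hurdle, is irrelevant. All you need is that each $\eta_i$, as a universal polynomial in $\zeta$, vanishes at $\zeta=1$ — which is immediate from $[1]-1=0$ in $W_n$ of any ring — so that $\eta_i(\zeta)=(\zeta-1)Q_i(\zeta)$ with $Q_i$ having coefficients in $\F_p\subset R$ and hence $v_R(\eta_i(\e(g)))\geq v_R(\e(g)-1)$. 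The unit linear term would only be needed for an equality of valuations, which the argument never uses; likewise, the appearance of $e$ in $s_1(c)$ comes from $v_K(\zeta_p-1)=\frac e{p-1}$, not from those combinatorial identities. Finally, as in the paper, the reduction of the full statement to the single claim about $u$ tacitly uses that $u^m$ and $g(u)^m$ lie in $[\a_R^{>c}]$ for $m>c$ (so that power series in $\gs_n$ truncate modulo $[\a_R^{>c}]$ and $\gs$-linearity of $g\cdot f$ follows); this is harmless but worth a sentence.
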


\begin{proof}
For the first statement, it is enough to show that $G_s$ acts trivially
on $u \in W_n(R)/[\a_R^{>c}]$ for $s = 1 + [s_1(c)]$ (where $[\cdot]$
denotes the integer part). Put $s' = 1 + [s_0(c)]$. Since $0 \leq
\log_p(\frac p {p-1}) \leq 1$, we have $s' = s$ or $s' = s + 1$. By
Proposition \ref{prop:wnrz}, $W_n(R) / [\a_R^{>c}]$ is isomorphic to
$W_n(\O_{\bar K}/p)/[\a_{\bar K}^{>c/p^{s'}}]$. Hence we have to show
that $g [\pi_{s'}] - [\pi_{s'}]$ belongs to $[\a_R^{>c/p^{s'}}]$ for all
$g \in G_s$. It is clear for $g \in G_{s'}$ (since the difference
vanishes). It remains to consider the case where $s' = s+1$ and $g
\not\in G_{s'} = G_{s+1}$. Then $g \pi_{s+1} = (1 + \eta) \pi_{s+1}$
where $(1 + \eta)$ is a primitive $p$-th root of unity. Let us compute
$(g \pi_{s+1}, 0, \ldots, 0) - (\pi_{s+1}, 0, \ldots, 0) = (x_0, \ldots,
x_{n-1})$ in $W_n(\O_{\bar K})$. By writing phantom components, we get
the following system:
$$\left\{ \begin{array}{l}
x_0 = \eta \: \pi_{s+1} \\
x_0^p + p x_1 = 0 \\
\hspace{1em} \vdots \\
x_0^{p^{n-1}} + p x_1^{p^{n-2}} + \cdots + p^{n-1} x_{n-1} = 0 \\
\end{array} \right.$$
Using $v_K(\eta) = \frac e{p-1}$, we easily prove by induction on $i$
that $v_K(x_i) = \frac e{p-1} + \frac 1{p^{s+1-i}}$. Thus $v_K(x_i) >
\frac c {p^{s+1-i}}$ for all $i$ and $(x_0, \ldots, x_{n-1}) \in
 [\a_{\bar K}^{>c/p^{s'}}]$ as expected.

Second part of proposition (\emph{i.e.} compatibilities) is obvious.
\end{proof}

\begin{remark}
\label{rem:slimp}
If $c \geq \frac{p-1}{p-2}$, the bound $s_1(c)$ that appears in the
Theorem can be replaced by $s_1(c-1)$. The proof is totally the same.
\end{remark}

\begin{theorem}
\label{theo:prolong}
For any $\M \in \Kmn$ and any integer $s > \smin$, $T_{\gs_n} (\M)$ is
canonically endowed with an action of $G_s$ (which prolongs the natural
action of $G_\infty$).
\end{theorem}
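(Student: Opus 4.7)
The plan is to realize $T_{\gs_n}(\M)$ inside a module on which $G_s$ already acts, and then to transport the action back. Specifically, Proposition \ref{prop:prba} tells us that the reduction map $\rho_{\infty,b} : T_{\gs_n}(\M) \to J_{n,b}(\M)$ is injective with image exactly $\rho_{a,b}(J_{n,a}(\M))$. So the strategy is to make $\rho_{a,b}(J_{n,a}(\M)) \subset J_{n,b}(\M)$ into a $G_s$-stable submodule and pull the action back along the bijection $T_{\gs_n}(\M) \simeq \rho_{a,b}(J_{n,a}(\M))$.

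For $s > \smin = s_1(a)$, Proposition \ref{prop:prolong} equips $J_{n,a}(\M)$ with a $G_s$-action; since $s_1$ is monotone and $a > b$, the same proposition simultaneously equips $J_{n,b}(\M)$ with a $G_s$-action and makes $\rho_{a,b}$ into a $G_s$-equivariant map. Thus $\rho_{a,b}(J_{n,a}(\M))$ is a $G_s$-submodule of $J_{n,b}(\M)$, and via $\rho_{\infty,b}$ we obtain a canonical $G_s$-action on $T_{\gs_n}(\M)$.

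It then remains to verify the two naturality assertions. That the new action extends the usual $G_\infty$-action follows directly from the first bullet of Proposition \ref{prop:prolong} (the $G_s$-action on $J_{n,b}(\M)$ already restricts to the natural $G_\infty$-action) combined with the fact that $\rho_{\infty,b}$ is, by its very definition, $G_\infty$-equivariant. Independence of the choice of auxiliary $s > \smin$ (and, more generally, coherence of the actions for varying $s$) is given by the second bullet of Proposition \ref{prop:prolong}.

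The construction is essentially bookkeeping once Propositions \ref{prop:prba} and \ref{prop:prolong} are available; the genuine obstacle, which those propositions already overcome, is that $\hom_{\gs,\varphi}(\M, W_n(R))$ itself carries no visible action of any $G_s$ beyond $G_\infty$, because $G_s$ does not fix the element $u = [\upi] \in W_n(R)$. The role of passing to the quotients $W_n(R)/[\a_R^{>c}]$ is exactly to kill the obstruction $g[\upi] - [\upi]$, and Proposition \ref{prop:prba} ensures that this quotient is still faithful enough to recover $T_{\gs_n}(\M)$.
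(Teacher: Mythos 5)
Your proposal is correct and is exactly the argument the paper intends: its proof of Theorem \ref{theo:prolong} literally reads ``combine Propositions \ref{prop:prba} and \ref{prop:prolong}'', and your write-up is a careful unpacking of that combination (transporting the $G_s$-action from $\rho_{a,b}(J_{n,a}(\M)) \subset J_{n,b}(\M)$ back through the injection $\rho_{\infty,b}$). No gaps.
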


\begin{proof}
Just combine Propositions \ref{prop:prba} and Proposition \ref{prop:prolong}.
\end{proof}

\begin{remark}
Using Remark \ref{rem:slimp}, it appears that we may replace $\smin
= s_1(a)$ by $s_1(a-1)$ in previous Theorem. However, it won't be
useful in the sequel since $\smin$ is really needed in Theorem
\ref{compatible}.
\end{remark}

\section{Torsion semi-stable Galois representations}

In this section, we use the theory of $(\varphi, \hat G)$-modules to
define $\hat J_{n, a} (\hM)$ attached to $p^n$-torsion semi-stable
representation $T$. After establishing isomorphism (of $\Z_p
[G_\infty]$-modules) between $\hat J _{n, a }(\hM)$ and $J_{n ,a}(\M)$,
we will show that ${J}_{n , a} (\M) \simeq T $ as $G_{s}$-modules with $s
> \smin$.

\subsection{Torsion $(\varphi, \hat G)$-modules}

We refer readers to \cite{fontaine3} for the definition and standard
facts on semi-stable representations.

We first review some facts on $(\varphi, \hat G)$-modules in \cite{liu4}
and extend them to $p^n$-torsion case. We denote by $S$ the $p$-adic
completion of the divided power envelope of $W(k)[u]$ with respect to
the ideal generated by $E(u)$. There is a unique continuous map
(Frobenius) $\varphi: S \to S$ which extends the Frobenius on $\gs$.
Define a continuous $W(k)$-linear derivation $N: S \to S$ such that
$N(u)= -u$.

Recall $R= \varprojlim_s \O_{\bar K }/p$.  There is a unique surjective
continuous map $\theta: W(R) \to \widehat{\O}_{\bar K }$ which lifts the
projection $R \to \O_{\bar K }/p$ onto the first factor in the inverse
limit. We denote by $A_{\t {cris}}$ the $p$-adic completion of the
divided power envelope of $W(R)$ with respect to $\t{Ker}(\theta)$.
Recall that $[\p] \in W(R)$ is the Teichm\"uller representative of $\p =
(\pi_s)_{s\geq 0} \in R$ and we embed the $W(k)$-algebra $W(k)[u]$ into
$W(R)$ via $u \mapsto [\p]$. Since $\theta(\p)= \pi$, this embedding
extends to an embedding $\gs \inj S \inj A_{\t{cris}}$, and $\theta |_S$
is the $W(k)$-linear map $s: S \to \O_K $ defined by sending $u$ to
$\pi$. The embedding is compatible with Frobenius endomorphisms. As
usual, we write $B_\t{cris}^+ := A_\t{cris}[1/p]$.

For any field extension $F/\Q_p$, set $F_{p^\infty} := \bigcup
\limits_{n=1}^\infty F (\zeta_ {p^n})$ with $\zeta _{p^n}$ a primitive
$p^n$-th root of unity. Note that $K_{\infty, p^\infty} := \bigcup \limits
_{n=1}^\infty K({\pi_n}, \zeta_{p^n})$ is Galois over $K$. Let
$G_{p^\infty} := \gal(K_{\infty, p^\infty}/ K_{p^\infty})$, $H_K:= \gal
(K_{\infty, p^\infty}/ K_\infty)$ and $\hat G: =\gal (K_{\infty,
p^\infty}/K) $. By Lemma 5.1.2 in \cite{liu3}, we have $K_{p^\infty}
\cap K_\infty = K$, $\hat G= G_{p^\infty} \rtimes H_K$ and $G_{p^\infty}
\simeq \Z_p(1)$. For any $g \in G$, write $\e(g)= g(\p)/{\p}$. Then
$\e(g)$ is a cocycle from $G$ to the group of units of $R^*$. In
particular, fixing a topological generator $\tau$ of $G_{p^\infty}$, the
fact that $\hat G= G_{p^\infty} \rtimes H_K$ implies that $\e(\tau)=
(\epsilon_s)_{s \geq 0} \in R^*$ with $\epsilon_s$ a \emph{primitive}
$p^s$-th root of unity. Therefore, $t := -\log([\e(\tau)]) \in \acris$ is
well defined and for any $g \in G$, $g(t) = \chi(g)t$ where $\chi$ is
the cyclotomic character. We reserve $\e$ for $\e(\tau)$.


For any integer $n\geq 0$, let $t^{\{n\}}= t ^{r(n)} \gamma_{\tilde
q(n)}(t^{p-1}/p)$ where $n = (p-1)\tilde q(n) + r(n)$ with $ 0 \leq
r(n)< p-1$ and $\gamma_i(x) = \frac{x^i}{i!}$ is the standard divided
power. Define subrings $\R_{K_0}$ and $\hR$ of $ B^+_\cris$ as in
\S2.2, \cite{liu4}:
$$\R_{K_0}: =\left\{x = \sum_{i=0 }^\infty f_i t^{\{i\}}, f_i \in S[1/p] \t{
and } f_i \to 0\t{ as }i \to +\infty \right\}$$
and $\hR:= W(R) \cap \R_{K_0}$.
Let $I_+ R= \{x \in R \, / \, v_R (x) >0\} = \a_R ^{>0}$ be the maximal ideal of $R$.
We have exact sequences
$$0\to W_n (I_+ R) \to W_n(R)\overset{\nu_n}{\to} W(\bar k) \to 0 \t{ and } 0
\to W(I_+R) \to W(R) \overset{\nu}{\to} W(\bar k) \to 0. $$
One can naturally extend $\nu$ to $\nu : B^+_\cris \to W(\bar
k)[\frac{1}{p}]$ (see the proof of Lemma 2.2.1 in \cite{liu4}). For any
subring $A$ of $B^+_\cris$ (resp. $W_n(R)$), we write $I_+A =
\t{Ker}(\nu) \cap A$ (resp. $I_+A = \t{Ker}(\nu_n) \cap A$) and $I_+:=
I_+ \hR$. Now recall $M_n$ stands for $M/p^nM$.

\begin{lemma}
\label{torsion}
We have the following commutative diagram :
\begin{equation}
\begin{split}
\xymatrix{ 0 \ar[r] & W_n (I_+R) \ar[r] & W_n(R) \ar[r]^- {\nu_n} &
W_n(\bar k) \ar[r] & 0 \\
0 \ar[r] & I_{+, n}\ar@{^{(}->}[u] \ar[r] & \hR_n \ar[r]\ar@{^{(}->}[u]
& W_n(k) \ar[r]\ar@{^{(}->}[u] & 0}
\end{split}
\end{equation}
such that both rows are short exact and all vertical arrows are injective.
\end{lemma}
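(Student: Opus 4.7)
The plan is to obtain both rows of the diagram as the mod $p^n$ reductions of the two integral exact sequences
$$0 \to W(I_+R) \to W(R) \overset{\nu}{\to} W(\bar k) \to 0 \quad \text{and} \quad 0 \to I_+ \to \hR \overset{\nu|_{\hR}}{\to} W(k) \to 0$$
recalled just above the statement. First I would observe that the top one is exact by the very definition of $W(I_+R)$; for the bottom one, surjectivity of $\nu|_{\hR}$ holds because the constants $W(k)$ sit inside $\hR = W(R) \cap \R_{K_0}$ (as the constant series in the description of $\R_{K_0}$), and exactness in the middle is the very definition $I_+ = \hR \cap \ker \nu$.

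Next, since $W(R)$, $W(\bar k)$, $\hR$ and $W(k)$ are all $p$-torsion free, applying the snake lemma to multiplication by $p^n$ on each integral sequence will yield the two short exact rows of the diagram. For the leftmost term in the top row one also needs the identification $W(I_+R)/p^n \simeq W_n(I_+R)$, which is immediate from the Witt vector coordinate description and from $W(I_+R)$ being a $p$-torsion free ideal of $W(R)$.

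The key input for the injectivity of the three vertical arrows will be the \emph{$p$-saturation} of $\hR$ inside $W(R)$, namely $\hR \cap p^n W(R) = p^n \hR$. This follows from $\hR = W(R) \cap \R_{K_0}$ together with the fact that $\R_{K_0} \subset B^+_\cris$ is a $\Q_p$-algebra: if $x \in \hR$ satisfies $x = p^n y$ with $y \in W(R)$, then $y = x/p^n$ automatically lies in $\R_{K_0} \cap W(R) = \hR$. This yields the middle injection $\hR_n \hookrightarrow W_n(R)$. The right-hand injection $W_n(k) \hookrightarrow W_n(\bar k)$ is standard, since $W(\bar k)$ is free over $W(k)$; and for the left-hand one I would combine the same saturation argument with $I_+ = \hR \cap \ker\nu$: if $x \in I_+$ with $x = p^n y$ and $y \in W(I_+R)$, then $y \in \hR$ by the above, and applying $\nu$ gives $p^n \nu(y) = 0$, so $\nu(y) = 0$ and $y \in I_+$. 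Commutativity of the diagram is automatic from naturality of reduction modulo $p^n$. I do not foresee any substantial obstacle: the whole argument is a direct manipulation in the definitions of $\hR$ and $I_+$, with the only real ingredient being the $p$-saturation of $\hR$ in $W(R)$.
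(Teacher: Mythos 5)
Your proposal is correct and follows essentially the same route as the paper: reduce the integral diagram $0 \to I_+W(R) \to W(R) \to W(\bar k) \to 0$, $0 \to I_+ \to \hR \to W(k) \to 0$ modulo $p^n$, and deduce exactness of the bottom row and injectivity of the vertical maps from the $p$-saturation $\hR \cap p^n W(R) = p^n \hR$ (and similarly for $I_+$), which you prove exactly as the paper does, namely via $\hR = W(R) \cap \R_{K_0}$ and the fact that $\R_{K_0}$ is closed under division by $p$. The only cosmetic difference is that you invoke the snake lemma and torsion-freeness where the paper checks the corresponding saturation identities by hand.
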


\begin{proof}
By Lemma 2.2.1 in \cite{liu4}, we have a commutative diagram of exact
sequences:
$$ \xymatrix{ 0 \ar[r] & I_+W(R) \ar[r] & W(R) \ar[r]^- {\nu} & W(\bar
k) \ar[r] & 0 \\
0 \ar[r] & I_{+}\ar@{^{(}->}[u] \ar[r] & \hR \ar[r]\ar@{^{(}->}[u] &
W(k) \ar[r]\ar@{^{(}->}[u] & 0}$$
Modulo $p^n$ and noting that $I_+ W(R)= I_+B^+_\cris \cap W(R)= W
(I_+R)$, we get
$$ \xymatrix{ 0 \ar[r] & W_n (I_+R) \ar[r] & W_n(R) \ar[r]^- {\nu_n} &
W_n(\bar k) \ar[r] & 0 \\
& I_{+, n}\ar[u] \ar[r] & \hR_n \ar[r]\ar[u] & W_n(k) \ar[r]\ar[u] & 0}$$
Now it suffices to show that the bottom arrow is left exact and the last
two vertical arrows are injective. The last one is
obvious. To see the middle arrow, it suffices to show that $(p^n W(R))
\cap \hR = p^n \hR$.
Note that $\hR = \R_{K_0} \cap W(R)$. Let $x \in W(R)$ such that $p^n x
\in \hR\subset \R_{K_0}$. Then $x \in W(R) \cap \R_{K_0} = \hR$. So
$p^n x \in p^n \hR$ and $(p^n W(R)) \cap \hR = p^n \hR$. To see the
bottom is left exact, it suffices to show that $I_+ \cap p^n \hR=p ^n
I_+$. But $I_+ = I_{+}\R_{K_0} \cap \hR$. Let $ x \in \hR$ such that
$p^nx \in I_+ \cap p^n \hR$. Then $x \in I_{+}\R_{K_0}$. Thus $x \in
I_{+}\R_{K_0} \cap \hR = I_+$ and $I_+ \cap p^n \hR=p ^n I_+$.
\end{proof}

As in Lemma 2.2.1 in \cite{liu4}, we see that $\hR$ (resp.  $\hR_n$) is
$\varphi$-stable and $G$-stable subring of $W(R)$ (resp. $W_n (R)$),
$G$-action on $\hR$ factors through $\hat G $. Let $(\M, \varphi)$ be a
finite free or $p^n$-torsion Kisin module of height $\leq r$, set $\hM:= \hR
\otimes_{\varphi, \gs} \M$ and consider the following map
\begin{equation}
\label{submodule}
\M \simeq \gs \otimes_{\gs} \M \to \gs \otimes _{\varphi, \gs}\M \to
\hR\otimes_{\varphi, \gs} \M = \hM.
\end{equation}
We claim the it is an injective (thus $\M$ can be always regarded
as a $\varphi(\gs)$-submodule of $\hM$). Indeed, by Lemma \ref{torsion},
we have $\varphi(\gs_n) \inj \hR_n \inj W_n(R)$. Thus the claim is clear
if $\M$ is finite $\gs$-free or $\M$ is finite $\gs_n$-free. For a
general $\M$ which is killed by $p^n$, by the discussion in the end of
\S2.1, $\M$ can be written as a successive extension of finite free
$\gs_1$-modules. Therefore one can reduce the proof of the claim to
the following lemma.

\begin{lemma}
\label{exact}
The functor $\M \mapsto \hR \otimes_{ \varphi, \gs}\M $ (resp. $\M
\mapsto W(R) \otimes_{\varphi, \gs}\M$) is an exact functor from the
category of Kisin modules to the category of $\hR$-modules (resp.
$W(R)$-modules).
\end{lemma}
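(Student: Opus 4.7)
The plan is to strengthen the statement into a genuine flatness statement: I would show that both $\hR$ and $W(R)$ are flat as $\gs$-modules when the source $\gs$-module structure is twisted by $\varphi$. Since tensor is always right-exact, this immediately yields exactness of the functors $\hR \otimes_{\varphi,\gs} -$ and $W(R) \otimes_{\varphi,\gs} -$ on the whole category of $\gs$-modules, hence in particular on Kisin modules.

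The argument splits naturally into two steps. First, I reduce to showing flatness via the \emph{direct} embedding $\gs \hookrightarrow \hR \subset W(R)$: the ring $\gs = W(k)[\![u]\!]$ is free of rank $p$ over $\varphi(\gs) = W(k)[\![u^p]\!]$ with basis $1, u, \ldots, u^{p-1}$, so $\varphi: \gs \to \gs$ is a finite flat ring map, and flatness via the direct embedding automatically implies flatness via its composition with $\varphi$, by transitivity. Second, I apply the local criterion of flatness to the two-dimensional regular local ring $\gs$: since $\gs/(p) = k[\![u]\!]$ is a discrete valuation ring, a $\gs$-module $M$ is flat if and only if $M$ is $p$-torsion free and $M/pM$ is $u$-torsion free.

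It then remains to verify these two conditions for each of $W(R)$ and $\hR$. For $W(R)$: it is $p$-torsion free because the Frobenius of $R$ is bijective, so in fact $W(R)/pW(R) \simeq R$; and $R$ is a domain in view of its valuation $v_R$, while the image of $u$ is $\p \neq 0$, so $u$ is a non-zero-divisor on $W(R)/pW(R)$. For $\hR \subset W(R)$: the $p$-torsion freeness is automatic, and Lemma \ref{torsion} already furnishes the injection $\hR/p\hR \hookrightarrow W(R)/pW(R) = R$, which forces $\hR/p\hR$ to be $u$-torsion free as well.

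There is no real obstacle: every ingredient — the domain property of $R$, the injection modulo $p^n$, and the freeness of $\gs$ over $\varphi(\gs)$ — is either elementary or already established in Lemma \ref{torsion}. The only content of the argument is the packaging of these facts through the local criterion of flatness and the observation that $\varphi$ itself is a finite flat map.
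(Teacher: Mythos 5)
Your strategy is genuinely different from the paper's: you aim to prove that $\hR$ and $W(R)$ are actually \emph{flat} over $\gs$ (via $\varphi$), which is much stronger than the lemma. The reduction through the finite free map $\varphi:\gs\to\gs$ is fine, but the pivotal step is broken as stated: the biconditional ``a $\gs$-module $M$ is flat if and only if $M$ is $p$-torsion free and $M/pM$ is $u$-torsion free'' is false. The local criterion of flatness requires the module to be ideally separated for the $p$-adic topology (Matsumura, Thm.~22.3, or a completeness hypothesis); without it there are counterexamples, e.g.\ $M=(\gs/u\gs)[1/p]=W(k)[1/p]$ with $u$ acting by zero: it is $p$-torsion free, $M/pM=0$ is vacuously $u$-torsion free, yet $\t{Tor}_1^{\gs}(\gs/u\gs,M)=M[u]=M\neq 0$, so $M$ is not flat. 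Your two conditions only kill $\t{Tor}_1$ against the residue field; for the other ideals of the two-dimensional ring $\gs$ they give, for instance, only $M[u]\subseteq\bigcap_n p^nM$, which need not vanish. For $W(R)$ this is repairable because $W(R)$ is $p$-adically complete and separated; but for $\hR=W(R)\cap\R_{K_0}$ --- the ring the lemma is really about --- the required completeness or ideal separatedness is a nontrivial claim you have not verified (Lemma \ref{torsion} yields $p^nW(R)\cap\hR=p^n\hR$, hence separatedness of $\hR$ itself, but not the stronger hypothesis the criterion needs).

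The paper sidesteps all of this: it does not prove flatness, only the vanishing $\t{Tor}_1^{\gs}(\M,\hR)=0$ for Kisin modules $\M$, which is all that exactness on this subcategory requires. This follows from the presentation $\M=\L_2/\L_1$ with $\L_1\subset\L_2$ finite free of the same rank (Proposition 2.3.2 of \cite{sec1:liu}): the transition matrix has nonzero determinant, $\varphi$ is injective, and $\hR$ is a domain, so $\hR\otimes_{\varphi,\gs}\L_1\to\hR\otimes_{\varphi,\gs}\L_2$ stays injective. That argument uses nothing about $\hR$ beyond its being a domain sitting $\varphi$-compatibly inside $W(R)$. To salvage your route you would have to establish the separatedness/completeness input for $\hR$; otherwise you should retreat to the paper's Tor computation on the subcategory of Kisin modules.
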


\begin{proof}
We only prove the exactness of the first functor, the proof for the
second being totally the same. It suffices to prove that $\t{Tor}_1 ^\gs
(\M , \hR)= 0$ for any Kisin module $\M$. Note that there exists finite
free Kisin modules $\L_1 \subset \L_2$ such that $\M= \L_2/\L_1$ (\emph{cf}
discussion in the end of \S2.1). Since $\hR\inj W(R)$ is an integral
domain and $\varphi: W(R) \to W(R)$ is injective, we see $\hR
\otimes_{\varphi, \gs} \L_1 \to \hR \otimes_{\varphi , \gs} \L_2$ is
injective. Thus $\t{Tor}_1 ^\gs (\M , \hR)= 0$.
\end{proof}

Let $(\M, \varphi)$ be a Kisin module of height $\leq r$ and $\hM:= \hR
\otimes_{\gs, \varphi}\M$. Frobenius $\varphi$ on $\M$ can be extended
to $\hM$ semi-linearly by $\varphi_{\hM}(a \otimes x) = \varphi_{\hR}(a)
\otimes \varphi_{\M}(x)$.

Now we can make the following definition: a \emph{$(\varphi, \hat
G)$-module of height $\leq r$} is a triple $(\M , \varphi, \hat G)$ where
\begin{enumerate}
\item $(\M, \varphi_\M)$ is a Kisin module of height $\leq r$;
\item $\hat G$ is a $\hR$-semi-linear $\hat G$-action on $\hat \M=\hR
\otimes_{\varphi, \gs} \M$;
\item $\hat G$ commutes with $\varphi_{\hM}$ on $\hM$, \emph{i.e.} for
any $g \in \hat G$, $g \varphi_{\hM} = \varphi_{\hM} g$;
\item regard $\M$ as a $\varphi(\gs)$-submodule in $ \hM $, then $\M
\subset \hM ^{H_K}$;
\item $\hat G$ acts on $W(k)$-module $M:= \hM/I_+\hM\simeq \M/u\M$ trivially.
\end{enumerate}

A morphism $f: (\M , \varphi,\hat G) \to (\M', \varphi', \hat G')$ is a
morphism $\f: (\M, \varphi) \to (\M' , \varphi')$ of Kisin modules such
that $\hR \otimes_{\varphi, \gs} \f: \hM \to \hM '$ is $\hat
G$-equivariant. If $\hM = (\M , \varphi, \hat G)$ be a $(\varphi, \hat
G)$-module, we will often abuse notations by denoting $\hM$ the
underline module $\hR \otimes_{\varphi, \gs}\M$. A $(\varphi, \hat
G)$-module $\hM := (\M, \varphi, \hat G)$ is called \emph{finite free}
(resp. $p^n$-torsion) if $\M$ is finite $\gs$-free (resp. $\M$ is killed
by $p^n$).

Let $\hM=(\M, \varphi, \hat G)$ be a $(\varphi, \hat G)$-module. We can
associate $\Z_p[G]$-modules:
\begin{equation*}\label{hatT}
\hat T (\hM) := \t{Hom}_{\hR, \varphi}( \hM, W(R)) \t{ if $\M$ is finite
$\gs$-free.}
\end{equation*}
and
\begin{equation*}\label{hatTn}
\hat T_n (\hM) := \t{Hom}_{\hR, \varphi}(\hM, W_n (R)) \t{ if $\M$ is of
$p^n$-torsion.}
\end{equation*}
Here $G$ acts on $\hat T(\hM)$ (resp. $\hat T_n(\hM)$) via $g (f)(x) = g
(f(g^{-1}(x)))$ for any $g \in G$ and $f \in \hat T(\hM)$ (resp. $\hat
T_n (\hM)$). For any $f \in T_\gs(\M) $ (resp. $T_{\gs_n} (\M)$), set
$\theta (f) \in \t{Hom}_{\hR}(\hM, W(R))$ (resp.  $\theta_n (f) \in
\t{Hom}_{\hR}(\hM, W_n (R))$) via:
\begin{equation}\label{theta}
\theta(f) (a\otimes x) \ (\t{resp. }\theta_n (f)(a \otimes x))= a
\varphi (f(x)) \t{ for any } a \in \hR, x \in \M.
\end{equation}

It is routine to check that $\theta: T_\gs(\M) \to \hat T(\hM)$ (resp.
$\theta_n: T_{\gs_n}(\M) \to \hat T_n(\hM)$) is well-defined.

Denote by $\t{Rep}_\t{tor}(G)$ the category of $G$-representations on
finite type $\Z_p$-modules which are killed by some $p$-power, and
$\t{Rep}_\t{tor}^{\t{ss}, r }(G)$ the full subcategory of \emph{torsion
semi-stable representations} with Hodge-Tate weights in $\{0, \dots ,
r\}$ in the sense that there exist $G$-stable $\Z_p$-lattices $L_1
\subset L_2 \subset V$ such that $V$ is semi-stable with Hodge-Tate
weights in $\{0, \dots, r\}$ and $T \simeq L_2/L_1$ as $\Z_p
[G]$-modules. The following is the main result of this subsection.

\begin{theorem}\label{main}
\begin{enumerate}
\item Let $\hM:=(\M, \varphi, \hat G) $ be a $(\varphi, \hat G)$-module.
Then $\theta$ (resp. $\theta_n$) induces a natural isomorphism of
$\Z_p[G_\infty]$-modules $\theta: T_\gs(\M) \ito \hat T (\hM)$ (resp.
$\theta_n: T_{\gs_n}(\M) \ito \hat T_n (\hM)$).
\item $\hat T$ induces an anti-equivalence between the category of
finite free $(\varphi, \hat G)$-modules of height $\leq r$ and the category
of $G$-stable $\Z_p$-lattices in semi-stable representations with
Hodge-Tate weights in $\{0, \dots, r\}$.
\item For any $T \in \trep$, there exists a torsion $(\varphi, \hat
G)$-modules $\hM$ such that $\hat T_n (\hM) \simeq T$ as $\Z_p
[G]$-modules.
\end{enumerate}
\end{theorem}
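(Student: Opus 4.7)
Proof plan. For part (1), I would proceed via tensor-hom adjunction. Using Lemma \ref{lem:tgsn} (and its free-case analogue), identify $T_{\gs_n}(\M)$ with $\hom_{\gs, \varphi}(\M, W_n(R))$ (resp.\ $T_\gs(\M)$ with $\hom_{\gs, \varphi}(\M, W(R))$). An inverse to $\theta_n$ is then constructed by sending $\hat f \in \hat T_n(\hM)$ to the map $x \mapsto \varphi^{-1}(\hat f(1 \otimes x))$, using that Frobenius is bijective on $W_n(R)$ since $R$ is perfect. The identity $1 \otimes sx = \varphi(s) \otimes x$ in $\hM = \hR \otimes_{\varphi, \gs} \M$, combined with the $\hR$-linearity and $\varphi$-equivariance of $\hat f$, yields $\gs$-linearity and $\varphi$-equivariance of the resulting $f$. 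For $G_\infty$-equivariance of $\theta_n$, I would use that $G_\infty$ acts on $\hM$ through its image $H_K \subset \hat G$, which by condition (4) of the $(\varphi, \hat G)$-module definition fixes each $1 \otimes x$; a short computation then matches the two $G_\infty$-actions on both sides.

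For part (2), I would invoke directly the main theorem of \cite{liu4}. For part (3), write $T = L_2/L_1$ with $L_1 \subset L_2 \subset V$ two $G$-stable $\Z_p$-lattices in a semi-stable $V$ with Hodge-Tate weights in $\{0,\ldots,r\}$, and choose $n$ with $p^n T = 0$. By part (2), produce $(\varphi, \hat G)$-modules $\hL_1, \hL_2$ with $\hat T(\hL_i) \simeq L_i$ and underlying Kisin modules $\M_1, \M_2$. The inclusion $L_1 \inj L_2$ corresponds under anti-equivalence to a morphism $\M_2 \to \M_1$ of Kisin modules of equal rank; since this map becomes an isomorphism after inverting $p$, it is injective with $p^n$-torsion cokernel $\M := \M_1/\M_2 \in \Kmn$. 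By Lemma \ref{exact}, tensoring with $\hR$ over $\varphi$ preserves exactness, yielding $0 \to \hM_2 \to \hM_1 \to \hM \to 0$ where $\hM := \hR \otimes_{\varphi, \gs} \M$. The quotient $\hat G$-action on $\hM$ endows it with a $(\varphi, \hat G)$-structure inheriting conditions (1)--(5) from $\hL_1, \hL_2$. To conclude $\hat T_n(\hM) \simeq T$ as $\Z_p[G]$-modules, I would apply $\hom_{\hR, \varphi}(-, W_n(R))$ to the SES, use part (1) to identify terms, and compare with the Galois-side SES $0 \to L_1 \to L_2 \to T \to 0$ via the exactness of $T_{\gs_\infty}$ cited from \cite{sec1:liu}.

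The main obstacle lies in the last step of part (3): verifying not only that $\hat T_n(\hM)$ is isomorphic to $T$ as $\Z_p[G_\infty]$-modules, but that the full $G$-action matches. This requires ensuring the quotient $\hat G$-action on $\hM$ realizes the correct $G$-action on $\hat T_n(\hM)$, and that the relevant Ext terms vanish so that the long exact sequence from $\hom_{\hR, \varphi}(-, W_n(R))$ yields $T$ exactly at the right spot. The exactness properties from \cite{sec1:liu} combined with the free-case anti-equivalence of part (2) should suffice to close this gap.
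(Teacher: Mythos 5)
Your proposal is correct and follows essentially the same route as the paper: parts (1) and (2) are deferred to \cite{liu4} (with the injectivity of $\M \inj \hM$ as the only new check in the torsion case), and part (3) proceeds by taking the quotient of the two $(\varphi,\hat G)$-modules attached to the lattices, using Lemma \ref{exact} for exactness of $\hR\otimes_{\varphi,\gs}(-)$, and verifying the axioms. The concluding identification $\hat T_n(\hM)\simeq T$ that you flag as the main obstacle is exactly what the paper's Lemma \ref{free-torsion} supplies, by the same mechanism you propose: the snake-lemma sequence, left-exactness of $\hom_{\hR,\varphi}(-,W_n(R))$, and transfer of exactness from the $G_\infty$-side (where $T_{\gs_\infty}$ is known to be exact) via the $G$-equivariant isomorphism $\theta_n$ of part (1).
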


\begin{proof}
(1) If $\M$ is finite $\gs$-free then it has been proved in Theorem
2.3.1 in \cite{liu4}. The proof of the $p^n$-torsion case is almost the
same, except one need to check that $\M$ is a $\varphi(\gs)$-submodule
of $\hM$ via \eqref{submodule}, which has been proved below
\eqref{submodule}.

(2) See Theorem 2.3.1 in \cite{liu4}.

(3) Let $L_1 \subset L_2$ be $G$-stable $\Z_p$-lattices inside a
semi-stable representation with Hodge-Tate weights in $\{0, \dots , r\}$
such that $T \simeq L_2/L_1$ as $\Z_p[G]$-modules. By (2), there exists
an injection of Kisin modules (resp. $(\varphi, \hat G)$-modules) $i:
\L_2 \inj \L_1$ (resp. $\hat i : \hat \L_2 \inj \hat \L_1$) that
corresponds the inclusion $L_1 \subset L_2$. Write $\M : = \L_1/ \L_2$
(resp. $\hM: = \hat \L_1/ \hat \L_2$). Apparently, there are a
$\varphi$-action and a $\hat G$-action on $\hM$ induced from $\hat \L_1$
and $\hat \L_2$. We claim that $\hM \simeq \hR \otimes _ {\varphi,
\gs}\M$ as $\varphi$-modules and $(\M, \varphi, \hat G)$ is a $(\varphi, \hat G)$-modules. To see these, tensor $\hR$ to the exact
sequence $0 \to \L_2 \to \L_1 \to \M \to 0$. By the proof of Lemma
\ref{exact}, we see that the sequence $0 \to \hat \L_2 \to \hat \L_1 \to
\hR \otimes_{\varphi, \gs}\M \to 0 $ is still exact. Thus $\hM \simeq
\hR \otimes_{\varphi ,\gs}\M$ as $\varphi$-modules. Moveover, we have
the following commutative diagram
$$\xymatrix{ 0 \ar[r] & \hat \L_2 \ar[r] & \hat \L_1 \ar[r] & \hR
\otimes_{\varphi, \gs}\M \ar[r]& 0\\
0 \ar[r] & \L_2 \ar[r] \ar@{^{(}->}[u] & \L_1 \ar[r] \ar@{^{(}->}[u] &
\M \ar[r] \ar@{^{(}->}[u] & 0 }$$
So $\varphi$-action and $\hat G$-action on $\hM$ commutates, $H_K $ acts
on $\M$ (as $\varphi(\gs)$-submodule in \eqref{submodule}) trivially,
and $\hat G$ acts on $\hM/ I_+\hM$ trivially. Thus $\hM= (\M , \varphi,
\hat G)$ is a $(\varphi, \hat G)$-module. Finally, to see that $\hat
T_n(\hM) \simeq T$ as $\Z_p[G]$-modules, it suffices to show that $\hat
T_n(\hM) \simeq \hat T(\hat \L_2)/ \hat T(\hat \L_1)$ and we reduce the
proof to the following Lemma.
\end{proof}

\begin{lemma}\label{free-torsion}
Let $0 \to \hL_2 \to \hL_1 \to \hM \to 0 $ be an exact sequence of
$(\varphi, G)$-modules with $\hL_1$, $\hL_2$ finite free and $\hM$
killed by $p^n$. Then we have an exact sequence of $\Z_p[G]$-modules $0
\to \hat T(\hL_1) \to \hat T(\hL_2) \to \hat T_n(\hM) \to 0$.
\end{lemma}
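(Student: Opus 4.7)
The strategy is to transport the sequence from the $(\varphi, \hat G)$-side to the pure Kisin-module side via Theorem \ref{main}(1), and then deduce its exactness from the exactness of $T_{\gs_\infty}$ established in \cite{sec1:liu}. By Theorem \ref{main}(1), $\theta : T_\gs(\L_i) \ito \hat T(\hL_i)$ for $i=1,2$ and $\theta_n : T_{\gs_n}(\M) \ito \hat T_n(\hM)$ are $\Z_p[G_\infty]$-equivariant isomorphisms. The two arrows in the proposed sequence---restriction $\hat T(\hL_1) \to \hat T(\hL_2)$ and a connecting map $\hat T(\hL_2) \to \hat T_n(\hM)$ to be exhibited---are $G$-equivariant by construction, since they arise from functoriality applied to the $G$-module targets $W(R)$ and $W_n(R)$. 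Thus it suffices to prove exactness as a sequence of $\Z_p$-modules of the parallel sequence
\[
0 \to T_\gs(\L_1) \to T_\gs(\L_2) \to T_{\gs_n}(\M) \to 0.
\]

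\textbf{The connecting map and the easy exactness.} Injectivity of the restriction $T_\gs(\L_1) \to T_\gs(\L_2)$ is clear, as any $f : \L_1 \to \gs^\ur$ vanishing on $\L_2$ has zero $\gs[1/p]$-linear extension to $\L_1[1/p] = \L_2[1/p]$. For $g \in T_\gs(\L_2)$, extend uniquely by $\gs[1/p]$-linearity to $\tilde g : \L_1[1/p] \to \gs^\ur[1/p]$ (which is automatically $\varphi$-equivariant by uniqueness); since $p^n \L_1 \subseteq \L_2$, we get $\tilde g(\L_1) \subseteq p^{-n}\gs^\ur$, and composing with the projection $p^{-n}\gs^\ur \twoheadrightarrow p^{-n}\gs^\ur/\gs^\ur \simeq \gs^\ur_n$ yields a $\varphi$-equivariant $\gs$-linear map $\L_1 \to \gs^\ur_n$ vanishing on $\L_2$, hence descending to $\partial(g) \in T_{\gs_n}(\M)$. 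The construction makes it transparent that $\partial(g) = 0$ iff $\tilde g|_{\L_1}$ lands in $\gs^\ur$, iff $g$ lies in the image of $T_\gs(\L_1) \to T_\gs(\L_2)$.

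\textbf{The main obstacle is surjectivity of $\partial$.} For this, apply the snake lemma to multiplication by $p^n$ on $0 \to \L_2 \to \L_1 \to \M \to 0$: since $\L_1, \L_2$ are $p$-torsion-free and $\M$ is killed by $p^n$, we get the four-term exact sequence in $\Kmn$
\[
0 \to \M \to \L_2/p^n \L_2 \to \L_1/p^n \L_1 \to \M \to 0.
\]
Applying the exact contravariant functor $T_{\gs_\infty}$ (Corollary 2.3.4 of \cite{sec1:liu}) together with the natural identification $T_{\gs_n}(\L_i/p^n \L_i) \simeq T_\gs(\L_i)/p^n T_\gs(\L_i)$ (itself obtained by applying $T_{\gs_\infty}$ to $0 \to \L_i \overset{p^n}\to \L_i \to \L_i/p^n \L_i \to 0$) produces a four-term exact sequence
\[
0 \to T_{\gs_n}(\M) \to T_\gs(\L_1)/p^n \to T_\gs(\L_2)/p^n \to T_{\gs_n}(\M) \to 0
\]
whose middle arrow is the reduction mod $p^n$ of the restriction. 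Since the cokernel of $T_\gs(\L_1) \hookrightarrow T_\gs(\L_2)$ is $p^n$-torsion (because for $g \in T_\gs(\L_2)$, $p^n \tilde g$ lands in $\gs^\ur$ and thus extends $p^n g$ to $\L_1$), the snake lemma applied to multiplication by $p^n$ on $0 \to T_\gs(\L_1) \to T_\gs(\L_2) \to T_\gs(\L_2)/T_\gs(\L_1) \to 0$ yields the same four-term sequence with $T_\gs(\L_2)/T_\gs(\L_1)$ in place of $T_{\gs_n}(\M)$. Matching the two identifies $T_{\gs_n}(\M) \simeq T_\gs(\L_2)/T_\gs(\L_1)$ canonically, realizes $\partial$ as the quotient map, and finishes the proof.
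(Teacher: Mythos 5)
Your proposal is correct and follows essentially the same route as the paper: both proofs apply the snake lemma to multiplication by $p^n$ on $0 \to \hL_2 \to \hL_1 \to \hM \to 0$ to get the four-term sequence $0 \to \M \to (\L_2)_n \to (\L_1)_n \to \M \to 0$, invoke the exactness of $T_{\gs_\infty}$ from \cite{sec1:liu} together with the identification $T_{\gs_n}((\L_i)_n) \simeq T_\gs(\L_i)/p^n$, and transfer between the Kisin-module and $(\varphi,\hat G)$-module sides via Theorem \ref{main}(1). Your write-up merely adds an explicit description of the connecting map and of the matching of the two four-term sequences, details the paper leaves implicit.
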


\begin{proof} Consider the following commutative diagram:
$$\xymatrix { 0 \ar[r] & p^n\hat \L_2 \ar@{^{(}->}[d] \ar[r] & p^n \hat
\L_1 \ar[r] \ar@{^{(}->}[d] & \hM \ar[d]\ar[r]& 0 \\
0 \ar[r] & \hat \L_2 \ar[r] & \hat \L_1 \ar[r] & \hM \ar[r]& 0} $$
where the last vertical map is $p^n=0$. By Snake lemma, we have an exact
sequence
\begin{equation}\label{snake}
0 \to \hM \to (\hL_2)_n \to (\hL _1)_n \to \hM \to 0 .
\end{equation}
Then we get a sequence of $\Z_p[G]$-modules
\begin{equation}\label{snake2}
0 \to \hat T_n(\hM) \to\hat T_n( (\hL_1)_n) \to \hat T_n((\hL _2)_n) \to
\hat T_n (\hM ) \to 0.
\end{equation}
Since $\hat T( (\hL_i)_n) \simeq (\hat T (\hL_i))_n$ for $i=1,2$, it
suffices to show that the above sequence is exact. But the underline
Kisin modules of the exact sequence \eqref{snake} is exact. Since
$T_\gs$ is exact, we get an exact sequence
$$0 \to T_{\gs_n }(\M) \to T_{\gs_n}( (\L_1)_n) \to T_{\gs_n}((\L
_2)_n) \to T_{\gs_n}(\M ) \to 0.$$
Now the exactness of \eqref{snake2} follows from Theorem \ref{main}.(1).
\end{proof}

\begin{remark}\label{nonunqiue}
For a fixed $T \in \trep$, it may exist two different $(\varphi, \hat
G)$-modules $\hM$, $\hM'$ such that $\hat T_{n} (\hM) \simeq \hat T_{n}
(\hM ') \simeq T$. The classical example of this is that $T=\Z/p\Z$ with
the trivial $G$-action and $K = \Q_p(\zeta_p)$.
\end{remark}

\subsection{$G_s $-action on  $\hat T(\hL)$}

Let $T \in \trep$ be a $p^n$-torsion representation, and $T \simeq
L'/L$ where $L\subset L' $ are $G$-stable $\Z_p$-lattices in a
semi-stable representation $V$ with Hodge-Tate weights in $\{0, \dots ,
r\}$. By Theorem \ref{main}, there exists $(\varphi, \hat G)$-modules
$\hL '\inj \hL$ such that $\hat T(\hL) \inj \hat T(\hL')$ corresponds
to the injection $L\subset L'$ and $\hat T_n (\hM)\simeq T$ where $\hM
:= \hL /\hL'$. Now write $\L$, $\L'$, $\M$ the underline Kisin modules
for $\hL$, $\hL'$, $\hM$ respectively. Set $\D := S[1/p] \otimes_{\varphi, \gs}
\L$ and recall $\e(g):= \frac{g (\p)}{\p}$ for any $g \in G$. \S3.2 of
\cite{liu4} explains that there exists an unique $W(k)$-linear
differential operator $N : \D \to \D$ over $N(u)= -u$ such that $G$ acts
on $B^+_\cris \otimes_{S}\D\simeq B^+_\cris \otimes_{\hR}\hL$ via
\begin{equation}\label{action}
g(a \otimes x)= \sum_{i=0}^\infty g(a)\gamma _i (-\log([\e(g)]))
\otimes N^i (x), \ \t{ for any } a \in B^+_\cris,  \  x \in \D.
\end{equation}

In particular, recall $t:= -\log ([\e])$ with $\e= \e(\tau)$ and $\tau$
is a fixed generator in $G_{p^\infty}$.  For any $x \in \L$, we have
$\tau(x) = \sum\limits_{i=0}^\infty \gamma_i (t)\otimes N^i (x)$. Let $A
\subset B^+_\cris$ be a $\varphi$-stable subring. Set
$$I^{[m]}A =\{a \in A| \varphi^n( a) \in A\cap \t{Fil}^m B^+_\cris, \t{
for all } n \geq 0 \}.$$
By proposition 5.1.3 in \cite{sec1:fontaine}, $I^{[m]}W(R)$ is generated
by $([\e]-1)^m $ and $v_R (\e -1)= \frac{ep}{p-1}$.

Now, define $s_2(c) := n-1 + \log_p (\frac{(p-1)c}{e}) = s_1(c)+1$. We
have the following lemma:


\begin{lemma}
\label{push}
For any $s > s_2(c)$, $g \in G_s$, and $ x\in \L$
$$g(x)-x \in ([\a_R^{>pc}] + p^n W(R))\otimes_{\hR} \hL$$
where, in a slight abuse of notations, we still denote by $[\a_R^{>c}]$
the ideal of $W(R)$ generated by all $[x]$ with $x \in \a_R^{>c}$.
\end{lemma}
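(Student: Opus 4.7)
My plan is to expand $g(x)-x$ via the explicit formula~(\ref{action}) for the $\hat G$-action on $\hL$, and then to bound each term of the resulting series using the fact that $g\in G_s$ is very close to the identity along the Kummer tower. Viewing $x\in\L$ as $1\otimes x\in\hL=\hR\otimes_{\varphi,\gs}\L$, formula~(\ref{action}) applied with $a=1$ yields
$$g(x)-x=\sum_{i\geq 1}\gamma_i\bigl(-\log[\e(g)]\bigr)\otimes N^i(x),$$
which is effectively a finite sum modulo any fixed power of $p$ because $N$ is nilpotent on $\D=S[1/p]\otimes_{\varphi,\gs}\L$.

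The first task is to pin down $\e(g)$. Decomposing $\hat G=G_{p^\infty}\rtimes H_K$ and using that $H_K$ fixes $\p$ (so $\e(h)=1$ for $h\in H_K$), the image of $g$ in $\hat G$ takes the form $\tau^{a(g)}h$ for some $a(g)\in\Z_p$. The hypothesis $g\in G_s$ means $g(\pi_s)=\pi_s$, and since $\tau^{a(g)}(\pi_s)=\epsilon_s^{a(g)}\pi_s$ with $\epsilon_s$ a primitive $p^s$-th root of unity, this forces $v_p(a(g))\geq s$. Hence $\e(g)=\e^{a(g)}$ and $-\log[\e(g)]=a(g)\cdot t$. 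The hypothesis $s>s_2(c)=n-1+\log_p\bigl((p-1)c/e\bigr)$ is equivalent to $ep^{s+1}/(p-1)>p^nc$; and since $R$ has characteristic $p$, $v_R(\e(g)-1)=p^s\cdot v_R(\e-1)\geq ep^{s+1}/(p-1)>p^nc$. A routine inspection of the Witt-addition polynomials (the crucial point being that each Witt component of $[a]-1$ vanishes at $a=1$, hence is divisible by $a-1$) then shows that every component of $[\e(g)]-1$ in $W_n(R)$ has $v_R$-valuation bounded below by $ep^{s+1}/(p-1)$, and Lemma~\ref{lem:aRc}(1) delivers $[\e(g)]-1\in[\a_R^{>pc}]+p^nW(R)$.

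To finish, I would analyze each summand. Using $\varphi(t)=pt$ we have $\gamma_i\bigl(-\log[\e(g)]\bigr)=a(g)^i\gamma_i(t)$ with $v_p(a(g)^i)\geq si$. When $si\geq n$, the factor $a(g)^i$ already kills the term modulo $p^n$, so combining with the integrality of $g(x)-x$ in $\hL$ places these contributions in $p^nW(R)\otimes\hL$. For $1\leq i<n/s$, the element $\gamma_i\bigl(-\log[\e(g)]\bigr)$ equals $\gamma_i\bigl([\e(g)]-1\bigr)$ up to a unit in $A_\cris$ (obtained from the identity $t=-([\e]-1)\psi(t)$ with $\psi(t)\in A_\cris^\times$), and the previous step combined with $v_R\bigl(([\e(g)]-1)^i\bigr)\geq i\cdot ep^{s+1}/(p-1)$ forces the term into $[\a_R^{>pc}]+p^nW(R)$ tensored with $\hL$. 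The main obstacle I anticipate is that individual divided powers $\gamma_i(t)=t^i/i!$ live only in $A_\cris$, not in $W(R)$; one must therefore exploit carefully the fact that the whole sum $g(x)-x$ is integral in $\hL$, so that after multiplying by $a(g)^i$ and reducing modulo $p^n$, the result actually represents an element of $([\a_R^{>pc}]+p^nW(R))\otimes_{\hR}\hL$ rather than merely of the larger $A_\cris\otimes_{\varphi,\gs}\L$.
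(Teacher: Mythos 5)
Your identification of $\e(g)$ and the estimate showing $[\e(g)]-1\in[\a_R^{>pc}]+p^nW(R)$ are fine, but the second half of the argument has a genuine gap — one you flag yourself — and it cannot be patched in the way you suggest. The individual terms $\gamma_i(-\log[\e(g)])\otimes N^i(x)$ of the series live only in $B^+_\cris\otimes_S\D$: the divided powers $\gamma_i(t)$ are not in $W(R)$, and $N^i(x)\in S[1/p]\otimes_{\varphi,\gs}\L$ has denominators (note also that $N$ is nilpotent on $D=\D/u\D$, not on $\D$, so the series is not ``effectively finite'' in the way you claim). Only the total sum $g(x)-x$ is known to lie in $W(R)\otimes_{\hR}\hL$. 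Consequently the step ``$v_p(a(g)^i)\geq si\geq n$, so the term lies in $p^nW(R)\otimes_{\hR}\hL$'' is invalid: $p^n$ times a non-integral element need not lie in $p^nW(R)\otimes_{\hR}\hL$, and the integrality of the whole sum does not distribute over its terms. (In the relevant application, $c=b$ with $N\geq e$, so $s\geq n$, your range $1\leq i<n/s$ is empty, and \emph{every} term of the series falls into this problematic case.)

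The paper avoids the issue by never expanding in powers of $t$. After the same reduction to $\hat G_s=G_{s,p^\infty}\rtimes H_K$, it takes $g=\tau^{p^s}$ and expands \emph{binomially}: $\tau^{p^s}-1=\sum_{i=1}^{p^s}\frac{p^s}{i}\binom{p^s-1}{i-1}(\tau-1)^i$. Each $(\tau-1)^i(x)$ is genuinely integral because $W(R)\otimes_{\hR}\hL$ is $G$-stable; formula \eqref{action} shows $(\tau-1)^i(x)\in(I^{[i]}B^+_\cris)(B^+_\cris\otimes_S\D)$, and combining this with integrality gives $(\tau-1)^i(x)\in(I^{[i]}W(R))(W(R)\otimes_{\hR}\hL)$, where $I^{[i]}W(R)=([\e]-1)^iW(R)$ is an honest principal ideal of $W(R)$. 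The terms with $v_p(i)\leq s-n$ are then killed modulo $p^n$ by the binomial coefficient acting on an \emph{integral} element, and for $v_p(i)\geq s-n+1$ one estimates $([\e]-1)^i$ by the valuation computation you already have. To repair your proof you must replace the Taylor series in $t$ by this binomial expansion in the integral operator $\tau-1$; the valuation inputs you use are the right ones, but they must be applied to $([\e]-1)^i$ inside $W(R)$ rather than to $\gamma_i(t)$ inside $A_\cris$.
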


\begin{proof}
Note that the $G$-action on $\hL$ factors through $\hat G$. So it
suffices to consider the action of $\hat G_s$, which is the image of
$G_s$ in $\hat G$. By Lemma 5.1.2 of \cite{liu3} applied to $K_s$, we
see that $\hat G_s :=G_{s, p^\infty} \rtimes H_K $, where $G_{s,
p^\infty}=\gal(K_{\infty, p^\infty}/ K_{s, p^\infty})$. Note that $H_K$
acts on $\L$ trivially and $G_{s, p^\infty}$ is topologically generated
by $\tau ^{p^s}$. Thus it suffices to prove the proposition for $g =
\tau ^{p^s}$. Writing
$$\tau ^{p^s}-1 = \sum\limits_{i=1}^{p^s}{p^s
\choose i} (\tau -1)^i = \sum_{i=1}^{p^s}\frac{p^s} i {p^s-1
\choose i-1} (\tau -1)^i$$
we see that it is enough to show
$$(\tau-1)^i (x) \in ([\a_R^{>pc}] + p^{n-s+v_p(i)} W(R))
\otimes_{\hR} \hL$$
for all integer $i$ such that $v_p(i) > s-n$, \emph{i.e.} $v_p(i)
\geq s-n+1$.

Using formula \eqref{action}, an easy induction on $l$ shows that
\begin{equation}
\label{n-1}
(\tau -1)^l (x) = \sum_{m=l}^ \infty\left (\sum_{i_1 + \cdots +i_l= m,
i_j \geq 1 } \frac{m!}{i_1! \cdots i_l ! }\right ) \gamma_{m}(t) \otimes
N^m(x)
\end{equation}
for any $l \geq 0 $ and $x \in \D$.
In particular, $(\tau -1)^l (x) \in ({I}^{[l]} B^+_\cris)
(B^+_{\cris}\otimes_S \D)$. Since $x\in \L$ and $W(R)\otimes_{\hR}\hL$
is $G$-stable, we get $(\tau-1)^l(x) \in (I^{[l]}W(R)) (W(R)
\otimes_{\hR}\hL) $. So it suffices to show that $([\e]-1)^i \in [\a_R ^{> pc}] + p^{n-s+v_p(i)}W(R)$ for any $i$ satisfying $v_p (i) \geq s-n+1$.
Write $i = p^{v+s-n+1} m$ with $v \geq 0$ and $p \nmid m$. From
$v_R(\e-1) = \frac{ep}{p-1}$, it follows that $([\e]-1)^{p^{s-n+1}}
\in [\a_R^{>pc}] + p W(R)$. By induction (on $v$), we easily find
$([\e]-1)^{p^{v+s-n+1}} \in [\a_R^{>pc}] + p^{v+1} W(R)$, which is
exactly the expected result.
\end{proof}

\subsection{Comparison between $\hat J_{n,c}(\hat {\mathfrak M})$ and
$J_{n,c} (\mathfrak M)$}

Let $T$ be a $p^n$-torsion semi-stable representation and $\hM $ an
attached $(\varphi, \hat G)$-module \emph{via} Theorem \ref{main}.(3). We have
the following definitions and results similar to \S 2. For any non
negative real number $c$, set
$$\hat J_{n,c} (\hM):= \t{Hom}_{\hR_n, \varphi} (\hM,
W_n(R)/[\a_R^{>pc}]).$$
and $\hat J_{n,\infty} (\hM) =\hat T_n ( \hM)$.

For any $c \leq \infty$, $\hat J_{n , c}(\hM)$ is a $\Z_p[G]$-module
and, for any $c \leq c' \leq \infty$, canonical projection induces a map
$\hat \rho_{c',c}: \hat J_{n,c'} (\hM)\to \hat J_{n,c}(\hM)$. Moreover,
we have a morphism $\theta_{n, c}(f) \in \t{Hom}_{\hR_n}(\hM,
W_n(R)/[\a_R^{>{pc}}])$ defined by:
\begin{equation}\label{thetana}
(\forall \alpha \in \hR) \, (\forall x \in \M) \quad
\theta_{n, c}(f)(\alpha \otimes x) = \alpha \varphi(f (x)).
\end{equation}

\begin{prop}
\label{forcompatible}
For any non negative integer $s > s_2(c) = n-1 + \log_p (\frac{(p-1)c}{e})$,
$\theta_{n, c} : J_{n, c} (\M) \to \hat J_{n, c} (\hM)$ is an
isomorphism of $\Z_p [G_s]$-modules.
\end{prop}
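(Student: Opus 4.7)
My plan is to split the statement into two claims, (i) that $\theta_{n,c}$ is a bijection of abelian groups, and (ii) that it is $G_s$-equivariant, handling them in turn. For (i), I would first observe that since $R$ is perfect, the Frobenius on Witt vectors $W_n(R)$ acts componentwise by $p$-th powers, so by Lemma \ref{lem:aRc}.(1) it descends to a \emph{bijection} $\bar\varphi : W_n(R)/[\a_R^{>c}] \to W_n(R)/[\a_R^{>pc}]$. I would then construct an inverse to $\theta_{n,c}$ by sending $\hat f \in \hat J_{n,c}(\hM)$ to
$$f : \M \to W_n(R)/[\a_R^{>c}], \qquad x \mapsto \bar\varphi^{-1}(\hat f(1 \otimes x)).$$
Using the relations $1 \otimes sx = \varphi(s) \otimes x$ and $\varphi_\hM(1 \otimes x) = 1 \otimes \varphi_\M(x)$ that hold in $\hM = \hR \otimes_{\varphi,\gs} \M$, the $\hR$-linearity and Frobenius-equivariance of $\hat f$ translate directly into $\gs$-linearity and Frobenius-equivariance of $f$, and the defining formula (\ref{thetana}) combined with $\hR$-linearity makes the check that this is a two-sided inverse immediate.

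For (ii), the hypothesis $s > s_2(c) = s_1(c)+1$ already provides the $G_s$-structure on $J_{n,c}(\M)$ via Proposition \ref{prop:prolong} (in particular, $G_s$ acts trivially on $u \in W_n(R)/[\a_R^{>c}]$). Both $\theta_{n,c}(g \cdot f)$ and $g \cdot \theta_{n,c}(f)$ will be $\hR$-linear maps $\hM \to W_n(R)/[\a_R^{>pc}]$, so their equality needs only be verified on the generating set $\{1 \otimes x : x \in \M\}$ of $\hM$ over $\hR$. The crucial ingredient will be a torsion analog of Lemma \ref{push}:
$$g(1 \otimes x) - 1 \otimes x \in [\a_R^{>pc}] \cdot (W_n(R) \otimes_{\hR_n} \hM) \quad\text{for every } g \in G_s \text{ and } x \in \M,$$
which I would derive by presenting $\hM = \hL / \hL'$ with $\hL, \hL'$ finite free $(\varphi, \hat G)$-modules (Theorem \ref{main}.(3)), lifting $x$ to $\tilde x \in \L$, applying Lemma \ref{push} to get membership in $([\a_R^{>pc}] + p^n W(R)) \otimes_\hR \hL$, and projecting to $\hM$, where the $p^n W(R)$ part dies since $p^n \hM = 0$. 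Extending $\hat f := \theta_{n,c}(f)$ by $W_n(R)$-linearity to $W_n(R) \otimes_{\hR_n} \hM$, this estimate forces $\hat f(g^{-1}(1 \otimes x)) = \hat f(1 \otimes x)$; combined with the commutation of Frobenius and Galois action on $W_n(R)$, one finds
$$(g \cdot \hat f)(1 \otimes x) = g(\hat f(1 \otimes x)) = g(\varphi(f(x))) = \varphi(g(f(x))) = \theta_{n,c}(g \cdot f)(1 \otimes x).$$

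The main obstacle is the torsion refinement of Lemma \ref{push}; bijectivity is essentially formal once perfectness of $R$ is exploited. The delicate point is that $\hM$ is only an $\hR_n$-module while $[\a_R^{>pc}]$ is an ideal of $W_n(R)$, which forces me to pass to $W_n(R) \otimes_{\hR_n} \hM$ and extend $\hat f$ by $W_n(R)$-linearity before the expression $[\a_R^{>pc}] \cdot (\cdots)$ is even meaningful. The threshold $s_2(c) = s_1(c) + 1$ is dictated exactly by this estimate: it is the extra layer of precision that Lemma \ref{push} asks for beyond Proposition \ref{prop:prolong}, needed to simultaneously control both the $[\a_R^{>pc}]$ and $p^n W(R)$ contributions after passing to the torsion quotient $\hM$.
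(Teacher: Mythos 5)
Your proposal is correct and follows essentially the same route as the paper: the inverse of $\theta_{n,c}$ is built exactly as you describe, from the bijectivity of $\varphi : W_n(R)/[\a_R^{>c}] \to W_n(R)/[\a_R^{>pc}]$ applied to $\hat f|_{\M}$, and equivariance is deduced from Lemma \ref{push} via the same computation. Your explicit torsion refinement of Lemma \ref{push} (lifting $x$ to $\L$, killing the $p^n W(R)$ term in the quotient, and base-changing to $W_n(R)\otimes_{\hR_n}\hM$ so the ideal membership makes sense) is a point the paper passes over silently, and spelling it out is a genuine improvement in rigor rather than a departure in method.
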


\begin{remark}
Since $s_2(c) = s_1(c)+1 \geq s_1(c)$, Proposition \ref{prop:prolong}
shows that $J_{n, c} (\M)$ is endowed with an action of $G_s$. Hence it
makes sense to claim that $\theta_{n,c}$ is $G_s$-equivariant. We do not
know if Proposition \ref{forcompatible} remains true under the
smaller assumption ``$s > s_1(c)$'': we conjecture that it is false but
we do not know any counter-example.
\end{remark}

\begin{proof}
It is routine to check that $\theta_{n, c}(f)$ is well defined and
preserves Frobenius. Hence $\theta_{n,c}$ is also well defined. Let's
first prove that it is bijective. Remark that $\varphi:
W_n(R)/[\a_R^{>c}] \ito W_n (R)/[\a_R^{>pc}]$ is an isomorphism. It
follows easily that $\theta _{n, c}$ is injective. For any $\hat f \in
\hat J_{n, c}(\hM)$, set $f' : = \hat f|_{\M}$ (recall that we regard
$\M$ as a $\varphi(\gs)$-submodule of $\hM$ via \eqref{submodule}). Then
$f' : \M \to W_n(R)/[\a_R^{>pc}]$ is $\varphi(\gs)$-linear map and is
compatible with Frobenius. Since $\varphi: W_n(R)/[\a_R^{>c}] \simeq W_n
(R)/[\a_R^{>pc}]$, we can set $f = \varphi^{-1} (f ') : \M \to
W_n(R)/[\a_R^{>c}]$. It is finally easy to check that $f$ belongs to
$J_{n,c}(\M)$ and that $\theta_{n,c} (f) =\hat f$. Hence $\theta_{n,c}$
is surjective, as required.

It remains to prove that $\theta_{n,c}$ is $G_s$-equivariant. Let $g \in
G_s$, $\alpha \in \hR$ and $x \in \M$. Expanding the definitions, we get
$g (\theta_{n,c}(f))(\alpha \otimes x) = \alpha g(\theta_{n , c}(f)(g
^{-1}(1 \otimes x)))$. Moreover Lemma \ref{push} shows that
$g^{-1}(1\otimes x)$ is congruent to $1\otimes x$ modulo $[\a_R^{>pc}]$
and hence that these two terms have same image under $\theta_{n,c}(f)$.
Thus:
\begin{eqnarray*}
g (\theta_{n,c}(f))(\alpha \otimes x)
&=& \alpha g(\theta_{n , c}(f)(g ^{-1}(1 \otimes x)))
= \alpha g (\theta_{n, c}(f) (1 \otimes x)) \\
&=& \alpha g (\varphi (f(x)))
= \alpha \varphi(g (f(x)))
= \theta_{n,c}(g(f))(\alpha \otimes x)
\end{eqnarray*}
and equivariance is proved.
\end{proof}

Recall that we have fixed an integer $N$ such that $u^N = 0$ in
$W[u]/E(u)^r$ and defined $b = \frac N {p-1}$ and $a = \frac {pN}
{p-1}$. Combining Propositions \ref{prop:prba} and \ref{forcompatible},
we directly get the following.

\begin{co}
\label{n&l2}
The morphism $\hat \rho_{\infty,b} : \hat T_{\gs_n}(\M) \to \hat
J_{n,b}(\M)$ is injective and its image is $\hat \rho_{a,b }(\hat
J_{n,a}(\M))$.
\end{co}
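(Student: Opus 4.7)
The plan is to deduce the corollary by transporting Proposition \ref{prop:prba} through the bijections supplied by Proposition \ref{forcompatible} and Theorem \ref{main}.(1). Since Proposition \ref{prop:prba} already gives the analogous statement on the $J_{n,c}(\M)$ side, what remains is simply to transfer injectivity and the image description across the comparison morphisms $\theta_{n,c}$.

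First, for any integer $s$ large enough that $s > s_2(a)$ (which automatically implies $s > s_2(b)$), Proposition \ref{forcompatible} provides $\Z_p$-linear bijections $\theta_{n,c} : J_{n,c}(\M) \ito \hat J_{n,c}(\hM)$ for both $c = b$ and $c = a$. At the same time, Theorem \ref{main}.(1) yields the bijection $\theta_n : T_{\gs_n}(\M) \ito \hat T_n(\hM)$, and by the convention $\hat J_{n,\infty}(\hM) = \hat T_n(\hM)$ adopted at the beginning of \S3.3 this fits into the same framework.

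Second, I would verify that these bijections are compatible with the reduction morphisms $\rho_{c',c}$ and $\hat\rho_{c',c}$. Concretely, I assemble the two squares
\[
\begin{array}{ccc}
T_{\gs_n}(\M) \xrightarrow{\rho_{\infty,b}} J_{n,b}(\M), & \quad & J_{n,a}(\M) \xrightarrow{\rho_{a,b}} J_{n,b}(\M),
\end{array}
\]
paired with the corresponding squares involving $\hat\rho_{\infty,b}$ and $\hat\rho_{a,b}$, and connected by the vertical isomorphisms $\theta_n$, $\theta_{n,b}$, $\theta_{n,a}$. Commutativity is an immediate unwinding of the formula $\theta_{n,c}(f)(\alpha \otimes x) = \alpha\varphi(f(x))$ from \eqref{thetana}: reducing modulo $[\a_R^{>pb}]$ commutes with multiplication by $\alpha$ and with the Frobenius on $W_n(R)$, so whether one reduces $f$ first and then applies $\theta_{n,\bullet}$, or vice versa, yields the same element of $\hat J_{n,b}(\hM)$.

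Finally, I invoke Proposition \ref{prop:prba}, which says $\rho_{\infty,b}$ is injective with image $\rho_{a,b}(J_{n,a}(\M))$. Because the vertical maps are bijections, $\hat\rho_{\infty,b}$ is injective as well, and its image equals $\theta_{n,b}\bigl(\rho_{a,b}(J_{n,a}(\M))\bigr)$; by the commutativity of the right-hand square this equals $\hat\rho_{a,b}\bigl(\theta_{n,a}(J_{n,a}(\M))\bigr) = \hat\rho_{a,b}(\hat J_{n,a}(\hM))$. There is no real obstacle here: the genuine content is already contained in Propositions \ref{prop:prba} and \ref{forcompatible}, and the only thing to double-check is the pair of commutative squares, which is a direct computation from the definitions.
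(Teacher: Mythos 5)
Your proposal is correct and is exactly the paper's argument: the paper deduces Corollary \ref{n&l2} by "combining Propositions \ref{prop:prba} and \ref{forcompatible}", i.e.\ by transporting the injectivity and image statement across the bijections $\theta_{n,c}$, with the commutativity of the relevant squares (which you verify explicitly from formula \eqref{thetana}) left implicit. No gap; your write-up just makes the routine compatibility check explicit.
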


\begin{theorem}
\label{compatible}
With previous notations, the map $\theta_n : T_{\gs_n}(\M) \ito \hat T_n
(\hM) \simeq T $ is an isomorphism of $\Z_p[G_s]$-modules for all
integer $s > \smin = n-1+\log_p(\frac N e)$.
\end{theorem}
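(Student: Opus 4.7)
The plan is to reduce the problem to Proposition \ref{forcompatible} by passing to the level $c = b = N/(p-1)$, which is precisely tuned so that $s_2(b) = \smin$. First, Theorem \ref{main}.(1) already gives that $\theta_n$ is a bijection of $\Z_p[G_\infty]$-modules, so the only remaining content is to promote this to $G_s$-equivariance for all integers $s > \smin$.

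My approach is to set up the commutative square
$$\xymatrix{
T_{\gs_n}(\M) \ar[r]^{\theta_n} \ar[d]_{\rho_{\infty,b}} & \hat T_n(\hM) \ar[d]^{\hat\rho_{\infty,b}} \\
J_{n,b}(\M) \ar[r]^{\theta_{n,b}} & \hat J_{n,b}(\hM)
}$$
whose commutativity is immediate from the defining formulas \eqref{theta} and \eqref{thetana}. The left vertical arrow is injective by Proposition \ref{prop:prba} and the right one by Corollary \ref{n&l2}. By the very definition of the $G_s$-action on $T_{\gs_n}(\M)$ supplied by Theorem \ref{theo:prolong} (which identifies $T_{\gs_n}(\M)$, via $\rho_{\infty,b}$, with the $G_s$-submodule $\rho_{a,b}(J_{n,a}(\M)) \subset J_{n,b}(\M)$, valid as soon as $s > s_1(a) = \smin$), the map $\rho_{\infty,b}$ is $G_s$-equivariant. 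The map $\hat\rho_{\infty,b}$ is even $G$-equivariant, being just reduction modulo $[\a_R^{>pb}]$ applied to a hom space.

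The central input is then that $\theta_{n,b}$ is itself $G_s$-equivariant. For this I would apply Proposition \ref{forcompatible} with $c = b$: one checks that $s_2(b) = n-1 + \log_p((p-1)b/e) = n-1 + \log_p(N/e) = \smin$, so $s > \smin$ is exactly the hypothesis needed. With all three auxiliary maps in the square $G_s$-equivariant, a one-line diagram chase using injectivity of $\hat\rho_{\infty,b}$ shows that $\theta_n(g\cdot x) = g \cdot \theta_n(x)$ for every $g \in G_s$ and $x \in T_{\gs_n}(\M)$.

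The only conceptual subtlety — and the one point that could be called the main obstacle — is matching the two different thresholds that appear in the theory, namely $s_1(a) = \smin$ coming from Proposition \ref{prop:prolong} and $s_2(b)$ coming from Proposition \ref{forcompatible}. The fact that the identity $s_2(b) = s_1(a)$ holds is what makes the bound $s > \smin$ achievable with no loss; without this numerical coincidence, the diagram chase would force a strictly stronger bound on $s$.
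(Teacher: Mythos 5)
Your proposal is correct and follows essentially the same route as the paper: the same commutative square (the paper merely displays your $\rho_{\infty,b}$ as the composite $\rho_{a,b}\circ\rho_{\infty,a}$), the same appeal to Theorem \ref{main}.(1) for bijectivity, to Proposition \ref{prop:prolong} and Proposition \ref{forcompatible} for equivariance of the auxiliary maps, to Corollary \ref{n&l2} for injectivity of $\hat\rho_{\infty,b}$, and the same pivotal numerical identity $s_1(a)=s_2(b)=\smin$. Nothing is missing.
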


\begin{proof}
We already know that $\theta_n$ is bijective (Theorem \ref{main}.(1)).
Now, consider the following commutative diagram:
$$\xymatrix @C=60pt @R=15pt {
T_{\gs_n}(\M) \ar[r]^{\theta_n}_{\sim} \ar[d]_{\rho_{\infty, a}}
& \hat T_n (\hM) \ar@{^(->}[dd]^{\hat \rho_{\infty, b}} \\
J_{n,a} (\M) \ar[d]_{\rho_{a, b}} \\
J_{n,b} (\M) \ar[r]^{\theta_{n, b}}_{\sim} & \hat J_ {n,b} (\hM)}$$
Note that $\smin = s_1(a) = s_2(b)$. Thus by definition $G_s$-action on
$T_{\gs_n}(\M)$ (resp. by Proposition \ref{prop:prolong}, resp. by
Proposition \ref{forcompatible}), $\rho_{\infty,a}$ (resp. $\rho_{a,b}$,
resp. $\theta_{n,b}$) is $G_s$-equivariant. Since $\hat \rho_{\infty,b}$
is injective (Corollary \ref{n&l2}) and $G$-equivariant, we deduce that
$\theta_n$ is also $G_s$-equivariant as claimed.
\end{proof}

We end this section by giving a proof of Theorem \ref{intro:pleinfid} of
introduction. For convenience of the reader, we first recall its
statement:

\begin{co}[Theorem \ref{intro:pleinfid}]
\label{cor:pleinfid}
Let $V$ and $V'$ be two semi-stable representations of $G$. Let $T$
(resp. $T'$) a quotient of two $G$-lattices in $V$ (resp. $V'$) which is
killed by $p^n$. Then any morphism $G_\infty$-equivariant $f : T \to T'$
is $G_s$-equivariant for all integer $s > n - 1 + \log_p(nr)$.
\end{co}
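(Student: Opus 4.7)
The plan is to transport the problem to the Kisin module side, where the canonical $G_s$-action supplied by Theorem~\ref{theo:prolong} is manifestly functorial; the bulk of the work has already been carried out in Sections~2 and~3, so only a short formal argument remains.

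First, I would apply Theorem~\ref{main}.(3) to $T$ and to $T'$ separately to obtain torsion $(\varphi, \hat G)$-modules $\hM$, $\hM'$ whose underlying Kisin modules $\M, \M' \in \Kmn$ satisfy $\hat T_n(\hM) \simeq T$ and $\hat T_n(\hM') \simeq T'$ as $\Z_p[G]$-modules. Composing with the isomorphisms $\theta_n$ of Theorem~\ref{main}.(1), the morphism $f$ becomes a $G_\infty$-equivariant morphism $\tilde f : T_{\gs_n}(\M) \to T_{\gs_n}(\M')$.

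Next, I would invoke the full faithfulness of the (contravariant) functor $T_{\gs_n}$ on $\Kmn$, a standard input from the torsion theory (\emph{cf.}\ \cite{sec1:liu}), which produces a morphism of Kisin modules $h : \M' \to \M$ with $\tilde f(\phi) = \phi \circ h$ for every $\phi \in T_{\gs_n}(\M)$. The key observation is then that the canonical $G_s$-action on $T_{\gs_n}(\M)$ is functorial in $\M$: by Proposition~\ref{prop:prba}, $T_{\gs_n}(\M)$ sits inside $J_{n,b}(\M) = \hom_{\gs,\varphi}(\M, W_n(R)/[\a_R^{>b}])$, and by Proposition~\ref{prop:prolong} the $G_s$-action on the latter comes entirely from the natural $G_s$-action on the coefficient ring $W_n(R)/[\a_R^{>b}]$. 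Since precomposition by $h$ only affects the source variable, $\tilde f$ automatically commutes with this $G_s$-action.

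To finish, I would choose $N = ern$, so that $\smin = n - 1 + \log_p(nr)$; Theorem~\ref{compatible} then guarantees that the identifications $T_{\gs_n}(\M) \simeq T$ and $T_{\gs_n}(\M') \simeq T'$ are themselves $G_s$-equivariant for every integer $s > n - 1 + \log_p(nr)$, and transporting $\tilde f$ back through them yields the desired $G_s$-equivariance of $f$. The main non-formal input is the full faithfulness of $T_{\gs_n}$ on torsion Kisin modules of height $\leq r$; once that is granted, everything else is sheer functoriality of the constructions of Section~2.
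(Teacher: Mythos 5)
Your overall route is the same as the paper's: descend to Kisin modules, lift $f$ to a morphism of Kisin modules $h : \M' \to \M$, observe that precomposition by $h$ commutes with the canonical $G_s$-action on $J_{n,b}$ (hence on $T_{\gs_n}$), and transport back through the $G_s$-equivariant identifications of Theorem \ref{compatible}. The functoriality observation and the final transport step are correct and are exactly what the paper does.

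The gap is in the step you flag as your ``main non-formal input'': the functor $T_{\gs_n}$ on all of $\t{Mod}^{\varphi,r}_{\gs_n}$ is \emph{faithful but not full}, so the morphism $h$ you need does not exist for an arbitrary choice of $\M$ and $\M'$. A $G_\infty$-equivariant map $T \to T'$ only gives a morphism of the associated \'etale $\varphi$-modules $\O_\E \otimes_\gs \M' \to \O_\E \otimes_\gs \M$, and in general this does not carry $\M'$ into $\M$ when $er \geq p-1$; the non-uniqueness of the Kisin module attached to a fixed torsion representation (see Remark \ref{nonunqiue} of the paper for the standard example $T = \Z/p\Z$, $K = \Q_p(\zeta_p)$) is a symptom of this failure. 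The paper repairs this by choosing $\M$ and $\M'$ to be \emph{maximal} in the sense of \cite{carliu}; Corollary 3.3.10 of that reference then guarantees that every $G_\infty$-morphism between the representations is induced by a morphism of the maximal Kisin modules. With that substitution (maximality plus the cited corollary in place of ``full faithfulness''), the rest of your argument goes through as written.
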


\begin{proof}
Consider $\M$ (resp. $\M'$) some Kisin module such that $T_{\gs_n}(\M) =
T$ (resp. $T_{\gs_n}(\M') = T'$). We may assume that $\M$ and $\M'$ are
maximal in the sense of \cite{carliu}. Then by Corollary 3.3.10 of
\emph{loc. cit.}, $f$ comes from a morphism $g : \M' \to \M$.  Using
Theorem \ref{compatible}, one easily see that $T_{\gs_n}(g) = f$ is
$G_s$-equivariant.
\end{proof}

\section{Ramification bound}

In this section, we give proofs of Theorems \ref{intro:ramgs} and
\ref{intro:main} based on above preparations. Our strategy is similar to
those in \cite{intro:abrashkin}, \cite{intro:fontaine},
\cite{intro:hattori} and \cite{intro:hattori2}.
Let $n$ be a positive integer. Recall that we have defined several
constants, that are:
\begin{itemize}
\item $N$ is an integer such that $u^N = 0$ in $W_n[u]/E(u)^r$ (recall
also that one may choose $N = ern$);
\item $b = \frac N {p-1}$ and $a = \frac {pN} {p-1}$;
\item $s_0(a) = n - 1 + \log_p(\frac a e) = n + \log_p(\frac N {e(p-1)})$;
\item $\smin = s_1(a) = s_2(b) = n - 1 + \log_p(\frac N e)$.
\end{itemize}

Note that if we have chosen $N = ern$, then $s_0(a)$ is nothing but the
minority of $s$ that appear in Theorem \ref{intro:ramgs}. Let $T = L/L'$
be a quotient of two lattices in a semi-stable representation and assume
that $T$ is killed by $p^n$. Since we have a surjective map $L/p^n L \to
T$, it is enough to bound ramification for $L/p^n L$. Hence, without
loss of generality, we will assume that $T$ is free of $\Z/p^n \Z$. By
Theorem \ref{main}, there exists a $(\varphi, \hat G)$-module $\hM$ such
that $\hat T_n(\hM) = T$. With our extra assumption, $\M$ is finite free
over $\gs_n$.

>From now on, we fix an integer $s > s_0(a)$. Remark that $s_0(a) >
\smin$ so that we also have $s > \smin$. Hence theory developed in
previous sections applies. In particular, by Propositions
\ref{prop:wnrz} and \ref{prop:prolong}, for all $c \in [0, e
p^{s-n+1}[$, we have a $G_s$-equivariant isomorphism
\begin{equation}
\label{eq:jncjnsc}
J_{n,c}(\M) \simeq
J^{(s)}_{n,c}(\M): =\t{Hom}_{\gs, \varphi } \left(\M , W_n(k)
\otimes_{W_n(k), \sigma^s} \frac{W_n(\O_{\bar K}/p)}
{[\a_{\bar K}^{>c/p^s}]}\right)
\end{equation}
where the structure of $\gs$-module on $W_n(\O_{\bar K}/p)$ is given by
$u \mapsto 1 \otimes \pi_s$. Moreover, by Corollary \ref{n&l2} and
Theorem \ref{compatible}
\begin{equation}
\label{eq:Timab}
T|_{G_s} \simeq \t{im}\: \rho_{a,b} : J_{n,a}(\M) \to J_{n,b}(\M).
\end{equation}
Define $L$ to be the splitting field of $T$, that is, $L = (\bar K)^{\t{Ker}(\rho)}$, where  $\rho: G_K \to \GL_{\Z_p} (T)$ the attached group homomorphism. Set $L_s =K_s L$.

\subsection{The sets $J^{(s),E}_{n,c}(\M)$}

Let $E$ be an algebraic extension of $K_s$ inside $\bar K$. By
restriction, the valuation $v_K$ induces a valuation on $E$ and one may
define, for all non negative real number $c$, $\a_E^{\geqslant c} = \{x
\in E \,/\, v_K(x) \geq c\}$ and $\a_E^{>c} = \{x \in E \,/\, v_K(x) >
c\}$. If $c$ belongs to the interval $[0, e p^{s-n+1}[$, we put
$$J^{(s),E}_{n,c}(\M):
=\t{Hom}_{\gs, \varphi } \left(\M , W_n(k) \otimes_{W_n(k), \sigma^s}
\frac{W_n(\O_E/p)}{[\a_E^{>c/p^s}]}\right).$$
They are $\Z_p[G_s]$-modules. As usual, if $0 \leq c \leq c' \leq e
p^{s-n+1}$, we have a natural $G_s$-equivariant morphism
$\rho^{(s),E}_{c',c} : J^{(s),E}_{n,c'}(\M) \to J^{(s),E}_{n,c}(\M)$.
Apparently $J^{(s),E}_{n,c}(\M)$ injects $J^{(s),\bar K}_{n,c}(\M) =
J^{(s)}_{n,c}(\M)$.

The aim of this subsection is to show the following theorem.

\begin{theorem}
\label{splitting}
Notations as above. The natural injection $\rho^{(s),E}_{a,b}(J^{(s),E}
_{n,a}(\M)) \subset \rho_{a,b}(J_{n,a}(\M))$ is bijective if and only if
$L_s \subset E$.
\end{theorem}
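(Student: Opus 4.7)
The inclusion $\rho^{(s),E}_{a,b}(J^{(s),E}_{n,a}(\M)) \subseteq \rho_{a,b}(J_{n,a}(\M))$ is immediate: via the identification (\ref{eq:jncjnsc}), an element of $J^{(s),E}_{n,a}(\M)$ is just an element of $J^{(s)}_{n,a}(\M) = J_{n,a}(\M)$ whose values happen to land in the $E$-subring, and the two projections $\rho^{(s),E}_{a,b}$ and $\rho_{a,b}$ agree on it. I would then prove the two directions of the equivalence separately.

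For the necessity direction $(\Rightarrow)$: any $f \in J^{(s),E}_{n,a}(\M)$ factors through the subring $W_n(k) \otimes_{\sigma^s} W_n(\O_E/p)/[\a_E^{>a/p^s}]$, on which $G_E$ acts pointwise trivially, so $f$ is $G_E$-invariant; since $\rho_{a,b}$ is $G_s$-equivariant (Proposition \ref{prop:prolong}) and $G_E \subseteq G_s$, its image $\rho_{a,b}(f)$ is $G_E$-fixed. Under the hypothesis that $\rho^{(s),E}_{a,b}(J^{(s),E}_{n,a}(\M))$ exhausts $\rho_{a,b}(J_{n,a}(\M))$, every element of the latter is $G_E$-fixed, and via (\ref{eq:Timab}) this says $G_E$ acts trivially on $T|_{G_s}$. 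Hence $G_E \subseteq \ker(\rho|_{G_s})$, and taking fixed fields yields $L_s \subseteq E$.

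For the sufficiency direction $(\Leftarrow)$: assume $L_s \subseteq E$, so that $G_E$ acts trivially on $T|_{G_s}$. Given $t \in \rho_{a,b}(J_{n,a}(\M))$, Proposition \ref{prop:prba} together with Theorem \ref{compatible} produces a unique $f_t \in T_{\gs_n}(\M)$ with $\rho_{\infty,b}(f_t) = t$, and under the isomorphism $T_{\gs_n}(\M) \simeq T|_{G_s}$ this $f_t$ is fixed by $G_E$. Using the $G_s$-equivariance of $\rho_{\infty,a}$ (which follows from Proposition \ref{prop:prolong} together with the injectivity from Proposition \ref{prop:prba}), one gets that $\rho_{\infty,a}(f_t) \in J_{n,a}(\M) \simeq J^{(s)}_{n,a}(\M)$ is $G_E$-invariant. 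To finish, one has to upgrade this $G_E$-invariance to genuine $E$-valuedness, i.e., show $\rho_{\infty,a}(f_t) \in J^{(s),E}_{n,a}(\M)$; applying $\rho^{(s),E}_{a,b}$ then recovers $t$ and places it in $\rho^{(s),E}_{a,b}(J^{(s),E}_{n,a}(\M))$.

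This last step is the main technical obstacle. On the target ring alone, $G_E$-fixed elements of $W_n(k) \otimes_{\sigma^s} W_n(\O_{\bar K}/p)/[\a_{\bar K}^{>a/p^s}]$ can be strictly larger than the $E$-valued subring, because elements very close to but not in $E$ become fixed modulo that ideal. The extra input is the $\varphi$-compatibility of $\rho_{\infty,a}(f_t)$ together with the height-$r$ condition (\ref{eq:condition}), which should rigidify the situation. I would formalize this via a bootstrap argument paralleling the proof of Proposition \ref{prop:prba}: starting from an approximate $E$-valued representative of $\rho_{\infty,a}(f_t)$ modulo $[\a_{\bar K}^{>b/p^s}]$, apply the Frobenius equation to amplify the valuation of the defect by a factor of $p$ at each step, thereby producing a true $E$-valued lift in $J^{(s),E}_{n,a}(\M)$.
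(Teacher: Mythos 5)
Your overall architecture is sound and your ``only if'' direction is essentially complete: elements of $J^{(s),E}_{n,a}(\M)$ are indeed fixed by $\gal(\bar K/E)$ (the $\gs$-structure goes through $\pi_s \in K_s \subset E$), so surjectivity onto $\rho_{a,b}(J_{n,a}(\M)) \simeq T$ forces $\gal(\bar K/E)$ to act trivially on $T$, whence $L_s \subset E$. You have also correctly isolated the crux of the ``if'' direction: $G_E$-invariance of an element of $J^{(s)}_{n,a}(\M)$ does not formally give $E$-valuedness, because $\bigl(\O_{\bar K}/\a_{\bar K}^{>c}\bigr)^{\gal(\bar K/E)}$ is in general strictly larger than $\O_E/\a_E^{>c}$.

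However, your proposed resolution of that crux has a genuine gap. Your bootstrap is supposed to start ``from an approximate $E$-valued representative of $\rho_{\infty,a}(f_t)$ modulo $[\a_{\bar K}^{>b/p^s}]$,'' but you never produce such a representative: manufacturing it from mere $G_E$-invariance is exactly the Galois-descent problem you have just acknowledged fails at the level of these quotient rings (any descent there loses an amount of valuation controlled by the ramification of $E/K_s$, which is not bounded in terms of the constants $a$, $b$, $N$). The paper resolves this by inverting the order of operations: it first lifts approximate solutions to \emph{exact} Witt-vector solutions over $\O_E$, introducing the $G_s$-set $\tilde J^{(s),E}_{n}(\M) \subset W_n(\O_E)^d$ of solutions of $\varphi(X) = X\tilde A$ (with the coordinatewise $p$-power map $\varphi$), and proves in Lemma \ref{lift} that reduction modulo $[\a_E^{>b/p^s}]$ identifies $\tilde J^{(s),E}_{n}(\M)$ with $\rho^{(s),E}_{a,b}(J^{(s),E}_{n,a}(\M))$. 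At the exact level there is no quotient ideal, so the $\gal(\bar K/E)$-fixed points of $\tilde J^{(s),\bar K}_n(\M)$ are literally $\tilde J^{(s),E}_n(\M)$; combining with $\tilde J^{(s),\bar K}_n(\M) \simeq T|_{G_s}$ yields $\rho^{(s),E}_{a,b}(J^{(s),E}_{n,a}(\M)) \simeq T^{\gal(\bar K/E)}$, from which both implications follow at once. Note also that your phrase ``paralleling the proof of Proposition \ref{prop:prba}'' hides a real difficulty: over $W_n(\O_E)$ the map $(z_0,\dots,z_{n-1}) \mapsto (z_0^p,\dots,z_{n-1}^p)$ is not a ring homomorphism (we are no longer in characteristic $p$), so the successive-approximation scheme of Proposition \ref{prop:prba} does not transpose verbatim; the proof of Lemma \ref{lift} requires an induction on $n$ and careful binomial-valuation estimates to make the contraction converge. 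The missing ingredient in your proposal is therefore precisely a statement of the type of Lemma \ref{lift}, and without it the ``if'' direction does not close.
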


\begin{remark}
By (\ref{eq:Timab}), $\rho_{a,b}(J_{n,a}(\M))$ is canonically isomorphic
to $T$ as a $\Z_p[G_s]$-module.
\end{remark}

In order to achieve the proof, we will need to lift
$J^{(s),E}_{n,c}(\M)$ at $\O_E$-level. We begin by defining a map
$\varphi: W_n(\O_E) \to W_n (\O_E)$ by
$$\varphi(z):= (z_0 ^p, \dots, z_{n-1}^p).$$
Note that $\varphi$ is \emph{not} a ring homomorphism. Nevertheless one
easily check that $\varphi([\lambda]z)= [\lambda^p] \varphi(z)$ for
$\lambda \in \O_E $ and $z \in W_n(\O_E)$ and $\varphi$ is
$G_s$-equivariant.

\begin{remark}
If $A$ is any ring, one can always define Frobenius $\phi: W(A) \to
W(A)$ by $w_m(\phi(x))= w_{m+1}(x), \, \forall x \in W(A)$, where
$w_n(x)$ is the $m$-th ghost component of $x$. Then $\phi$ can be proved
to be a ring homomorphism (see p.14 in \cite{Hazewinkel}).
Unfortunately, such Frobenius does not preserve the kernel of natural
projection $W(A) \to W_n(A)$ unless $A$ has characteristic $p$. Hence
it is not well-defined on $W_n (A)$ if $A$ has characteristic $0$.
\end{remark}

Recall now that we have assumed that $\M$ is finite $\gs_n$-free.
Select a basis $(e_1, \dots, e_d)$ of $\M$ and write $\varphi(e_1,
\dots, e_d)= (e_1, \dots, e_d) A$ with $A\in \t{M}_d(\gs_n)$. As
discussed in \S2.3, there exists $B\in \t{M}_d (\gs_n)$ such that $AB=
u^{N}I $. Let $\tilde A$ and $\tilde B$ be matrices in $\t{M}_d(W_n
(\O_{K_s}))$ that respectively lifts images of $A$ and $B$ under the
ring homomorphism $\gs_n \to W_n(\O_{K_s}/p)$, $u \mapsto \pi_s$,
$\lambda \mapsto \sigma^{-s}(\lambda)$ ($\lambda \in W_n(k)$).
Apparently, $\tilde A\tilde B \equiv [\pi_s]^N \pmod{W_n(p \O_{K_s})}$.
Hence, using condition on $s$, one prove that there exists a matrix $R$
with coefficients in $W_n([\a_{K_s}^{>0}])$ such that $\tilde A \tilde B
= [\pi_s]^N (I + R)$ (where $I$ is the identity matrix). Noting that
$I+R$ is invertible, one get $\tilde A \tilde B (I+R)^{-1} = [\pi_s]^N
I$. Hence, up to replacing $\tilde B$ by $\tilde B (I+R)^{-1}$, one may
assume that $\tilde A \tilde B = [\pi_s]^N I$.
Finally define a $G_s$-set
$$\tilde J^{(s),E}_{n}(\M):
= \big\{(\tilde x_1, \dots, \tilde x_d) \in W_n(\O_E)^d \, / \,
(\varphi(\tilde x_1), \dots, \varphi(\tilde x_d)) = (\tilde x_1, \dots,
\tilde x_d) \tilde A \big\}.$$
The natural projection $W_n (\O_E) \to W_n (\O_E/ p) \to W_n (\O_E/p)
/[\a_E^{>c/p^s}]$ induces a map $\tilde \rho^{(s),E}_{c}:
\tilde J^{(s),E}_{n}(\M) \to J^{(s),E}_{n, c}(\M)$.

\begin{lemma}
\label{lift}
$\tilde \rho^{(s),E}_{b}$ is injective and its image is
$\rho^{(s),E}_{a,b} (J^{(s),E}_{n,a} (\M))$.
\end{lemma}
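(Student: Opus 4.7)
The plan is to follow closely the proof of Proposition \ref{prop:prba}, with the identity $\tilde A \tilde B = [\pi_s]^N I$ playing the role of condition \eqref{eq:condition}, and with $W_n(\O_E)$ replacing the quotients $W_n(R)/[\a_R^{>c}]$ used there. The main technical complication is that $\varphi : W_n(\O_E) \to W_n(\O_E)$ is defined only componentwise and is \emph{not} a ring homomorphism (since $\O_E$ has characteristic zero); however the identity $\varphi([\lambda] z) = [\lambda^p] \varphi(z)$, together with the fact that $\tilde A$ and $\tilde B$ have integral coefficients, should be enough to carry out the argument via the usual analogues of Lemma \ref{lem:aRc} applied to $W_n(\O_E)$.

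For the injectivity of $\tilde \rho^{(s),E}_b$, I would start from $\tilde X = (\tilde x_1, \dots, \tilde x_d) \in \tilde J^{(s),E}_n(\M)$ with $\tilde \rho^{(s),E}_b(\tilde X) = 0$. This translates, via the analogue of Lemma \ref{lem:aRc}, to a componentwise estimate: the $j$-th Witt component of each $\tilde x_i$ has $v_K$-valuation strictly greater than $p^j c_0/p^s$, modulo $p\O_E$, for some real $c_0 > b = \frac{N}{p-1}$. Multiplying $\varphi(\tilde X) = \tilde X \tilde A$ on the right by $\tilde B$ yields $[\pi_s]^N \tilde X = \varphi(\tilde X) \tilde B$; since $\varphi$ multiplies the valuation of each Witt component by $p$ and $v_K([\pi_s]^N) = N/p^s$, a componentwise comparison shows that $c_0$ can be replaced by $c_1 = p c_0 - N$. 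Iterating, the sequence $(c_i)$ tends to $+\infty$ because $c_0 > \frac{N}{p-1}$, forcing $\tilde X = 0$.

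For the image description, the inclusion $\t{im}(\tilde \rho^{(s),E}_b) \subset \rho^{(s),E}_{a,b}(J^{(s),E}_{n,a}(\M))$ is automatic, because $\tilde \rho^{(s),E}_b$ factors as the natural reduction $\tilde J^{(s),E}_n(\M) \to J^{(s),E}_{n,a}(\M)$ followed by $\rho^{(s),E}_{a,b}$. Conversely, given $f \in J^{(s),E}_{n,a}(\M)$, I would pick any lift $X_0 \in W_n(\O_E)^d$ of $(f(e_1), \dots, f(e_d))$. The $\varphi$-compatibility of $f$ modulo $[\a_E^{>a/p^s}]$ translates to $\varphi(X_0) - X_0 \tilde A = [\alpha] Q_0$ for some $\alpha \in \O_E$ with $v_K(\alpha) > a/p^s$ and some $Q_0$ with coefficients in $W_n(\O_E)$. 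Seeking a correction of the form $Y = [\beta] Z$ with $\beta = \alpha/[\pi_s]^N$ (well-defined in $W_n(\O_E)$ since $a = N + b$, so $v_K(\beta) > b/p^s$), and using $\tilde A \tilde B = [\pi_s]^N I$ to multiply the resulting equation on the right by $\tilde B$, the condition $\varphi(X_0 + Y) = (X_0 + Y)\tilde A$ becomes a fixed-point equation, roughly of the form $Z = [\gamma] \varphi(Z) \tilde B + Q_0 \tilde B$ with $v_K(\gamma) > 0$. This should be solvable by iteration in the $(p, [\pi_s])$-adic topology on $W_n(\O_E)$, and the resulting $\tilde X := X_0 + [\beta] Z$ lies in $\tilde J^{(s),E}_n(\M)$ and maps to $\rho^{(s),E}_{a,b}(f)$ under $\tilde \rho^{(s),E}_b$.

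The main obstacle is the non-additivity of $\varphi$ in the successive approximation step: the naive identity $\varphi(X_0 + [\beta] Z) = \varphi(X_0) + [\beta^p]\varphi(Z)$ fails in $W_n(\O_E)$. One therefore has to inspect the Witt-vector expansion of $\varphi(X_0 + [\beta]Z)$ and verify that every cross term carries at least one extra factor of $[\beta]$ (so has strictly larger valuation than the leading piece), so that it can be absorbed into the next iterate. Carrying out this bookkeeping carefully, while keeping track of the shift between valuations of different Witt components, is the delicate part of the proof.
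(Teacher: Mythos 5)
Your plan has the same architecture as the paper's proof (lifting via $\tilde A \tilde B = [\pi_s]^N I$ and a successive-approximation fixed point), but the injectivity argument has a genuine gap. The set $\tilde J^{(s),E}_{n}(\M)$ is cut out by the equation $\varphi(\tilde x) = \tilde x \tilde A$ in which $\varphi$ is \emph{not} additive, so it is not stable under differences; hence showing that the only element mapping to $0$ is $0$ does not prove injectivity. One must take two arbitrary solutions $X,Y$ with the same image, set $Z = X - Y \in [\a_E^{>b/p^s}]$, and control $\varphi(Y+Z)-\varphi(Y)$, whose first nonzero component is $\sum_{i=1}^{p}\binom{p}{i}(Y^{(m)})^{p-i}(Z^{(m)})^i$. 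The cross terms with $1 \leq i < p$ gain only $v_K(p)=e$ over $v_K(Z^{(m)})$, no matter how large $v_K(Z^{(m)})$ is, so your iteration $c_{i+1}=pc_i-N$ stalls after one step. The paper instead obtains a one-step contradiction by comparing with the $m$-th component of $W\tilde B = [\pi_s]^N Z$, using the inequality $e > N p^{m-1-s}$, which comes from the standing hypothesis $s > s_0(a)$ and which your sketch never invokes.

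The same non-additivity also undermines the mechanism you propose for the image computation. It is not true that controlling the cross terms of $\varphi(X_0+[\beta]Z)$ by ``an extra factor of $[\beta]$'' suffices: a term carrying only $\beta^{p^0}=\beta$ does not survive division by $[\pi_s]^N[\beta]$ in the higher Witt components, where one divides by $(\pi_s^N\beta)^{p^j}$. The correct observation is that $\varphi$ \emph{is} additive on $W_n(\O_E/p)$ (it is the Witt Frobenius in characteristic $p$), so the defect of additivity lies in $W_n(p\O_E)\subset[\alpha]W_n(\O_E)$; this requires $v_K(p)=e \geq p^{n-1} v_K(\alpha)$, i.e.\ $a' \leq e/p^{n-1}$, which again uses $s>s_0(a)$, and the paper organizes the resulting estimates by induction on $n$, treating the last Witt component separately. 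Your outline of the image step is otherwise sound (the easy inclusion, the ansatz $Y=[\beta]Z$, the contraction), but as written the two key estimates are precisely the ones that would fail.
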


\begin{proof}
During the proof, if $z$ is any element in $W_n(\O_E)$, we will denote
by $z^{(i)} \in \O_E$ its $i$-th component. By the same way, we define $Z^{(i)}$
for a matrix $Z$ with entries in $W_n(\O_E)$. Also, if $Z$ is a matrix
with entries in $\O_E$, we will denote by $v_K(Z)$ the smallest
valuation of coefficients of $Z$.

We first show $\tilde \rho^{(s),E}_{b}$ is an injection. Assume that
$X$ and $Y$ are in $\tilde J^{(s),E}_{n}(\M)$ such that
$\tilde \rho^{(s),E}_{ b}(X) - \tilde \rho^{(s),E}_ {b}(Y)=0$. Then $Z =
X - Y \in [\a_E^{>b/p^s}] + W_n(p \O_E) = [\a_E^{>b/p^s}]$. We need to
prove that $Z = 0$. Assume by contradiction that is it false and
consider $m$ the smallest number such that $Z^{(m)} \neq 0$. Define $W := Z
\tilde A = \varphi(X)- \varphi (Y) = \varphi(Y + Z)- \varphi(Y)$. Easy
computations show that $W^{(i)} = 0$ for $i < m$ and
\begin{equation}
\label{eq:Wm}
W^{(m)} = \sum\limits_{i=1}^{p}{p \choose i} (Y^{(m)})^{p-i} (Z^{(m)})^i.
\end{equation}
where the multiplication is computed component by component. If $1 \leq i
< p$, we have
$$v_K \left({p \choose i} (Y^{(m)})^{p-i} (Z^{(m)})^i \right)
\geq e + v_K(Z^{(m)}) > N p^{m-1-s} + v_K(Z^{(m)})$$
and, using $v_K(Z^{(m)}) > b p^{m-1-s}$ (recall that $Z \in
[\a_E^{>b/p^s}]$), we find
$$v_K((Z^{(m)})^p) > (p-1) b p^{m-1-s} + v_K(Z^{(m)}) =
N p^{m-1-s} + v_K(Z^{(m)}).$$
Hence each term in RHS of (\ref{eq:Wm}) has valuation greater than
$N p^{m-1-s} + v_K (Z^{(m)})$. So $v_K(W^{(m)}) > N p^{m-1-s} + v_K
(Z^{(m)})$. But, on the other hand, comparing the $m$-th component of
$W \tilde B = [\pi_s]^N Z$, we get $v_K (W^{(m)}) \leq N p^{m-1-s} + v_K
(Z^{(m)})$. This is a contradiction and injectivity follows.

Let us now prove the second statement. Remark first that for all $c \in
[0, e p^{s-n+1}[$, we have $W_n(\O_E/p)/[\a_E^{>c/p^s}] \simeq
W_n(\O_E)/[\a_E^{>c/p^s}]$ and hence that $J^{(s),E}_{n, c}(\M)$ can
be identified with
$$\big\{(\tilde x_1, \dots, \tilde x_d) \in W_n(\O_E)^d \, / \,
(\varphi(\tilde x_1) , \dots, \varphi(\tilde x_d)) \equiv (\tilde x_1,
\dots, \tilde x_d ) \tilde A \pmod {[\a_E^{>c/p^s}]}\big\}.$$

Let $X= (\tilde x_1, \dots, \tilde x_d) \in W_n(\O_E)^d $ be an solution
as above. We have equation $\varphi (X)= X \tilde A + Q'$ with
coefficients of $Q'$ in $[\a_E ^{> a/p^s}]$. Actually, the congruence
holds in $[\a_E^{\geqslant a'}]$ for some $a'$ satisfying $ \frac{e}{p
^{ n-1}}\geq a' > \frac a {p^s}$. Note that $\frac{e}{p^{n-1}} \geq a'$
implies that $W_n (\O_E) \subset [\a_E^{\geqslant a'}]$.
For the rest of the proof, fix $\alpha \in \O_E$ some element of
valuation $a'$. By the similar argument as in Lemma \ref{lem:aRc}.(2),
$[\a_E^{\geqslant a'}]$ is the principal ideal generated by $[\alpha]$.
Therefore, one have $\varphi(X) - X\tilde A = [\alpha] Q$ with the
coefficients of $Q$ in $W_n (\O_E)$. We want to prove that there exists
a matrix $Y$ with coefficients in $[\a_E^{>b/p^s}]$ such that $ (X +
Y)\tilde A = \varphi(X + Y)$. Let us search $Y$ of the shape $[\beta] Z$
with $\beta = \frac{\alpha}{\pi_s^N}$ (which belongs to $\O_E$ because
of valuations) and coefficients of $Z$ in $W_n(\O_E)$. Our condition
then becomes $(X + [\beta]Z)\tilde A = \varphi (X+ [\beta] Z)$.
Multiplying $\tilde B $ on both sides and noting that $[\pi_s]$ is a
non-zero divisor of $W_n (\O_E)$, we need to prove that the following
equation has a (necessarily unique) solution:
\begin{equation}\label{solution}
[\pi_s]^N X + [\pi_s ]^N [\beta] Z = \varphi (X+ [\beta] Z) \tilde B.
\end{equation}
Let us prove by induction on $n$. If $n=1$, set $Z_0 = 0$ and $Z_{l+1}=
\pi_s ^{-N}\beta^{-1}(\varphi(X + \beta Z_l)\tilde B- \pi_s^NX)$. To see
that $Z_{l+1}$ is in $\O_E$, note that
$$\begin{array}{l}
\varphi(X + \beta Z_l)\tilde B -\pi_s^NX = (X+ \beta Z_l)^p \tilde B -
\pi_s ^NX \\
\hspace{1.5cm} = ( \varphi (X)\tilde B - \pi_s ^NX) + \sum \limits_{i
=1}^{p-1} {p \choose i} X^{p-i} (\beta Z_l) ^{i}\tilde B + \beta ^p
(Z_l)^p \tilde B.
\end{array}$$
Since $\varphi (X)\tilde B - \pi_s ^NX = \alpha Q \tilde B $, $v_K (\pi_s^N \beta )\leq v_K (\alpha) \leq v_K (p) $ and $(p-1)v_K(\beta) \geq v_K (\pi_s^N) $, we see that
$Z_{l+1}$ is in $\O_E$. Note that
\begin{eqnarray*} Z_{l+1}- Z_l &=& \pi_s^{-N} \beta^{-1} (\varphi(X + \beta Z_l)- \varphi (X + \beta Z_{l-1}) )\tilde B\\
&=& \pi_s^{-N} \beta^{-1} \sum _{i=1}^p {p \choose i} X^{p-i} \beta^{i}(Z_l ^{i}- Z_{l-1}^i)\tilde B .
\end{eqnarray*}
Since $v_K(p) \geq v_K(\pi_s^N \beta)$ and $ (p-1) v_K(\beta)>v_K
(\pi_s^N)$, we see that $v_K (Z_{l+1}- Z_l) \geq \gamma+ v_K (Z_{l}-
Z_{l-1})$, where $\gamma = \min (v_K(\beta), v_K
(\beta^{p-1}\pi_s^{-N}))>0 $. Hence $Z_l$ converge to $Z$ and we solve
the equation \eqref{solution} for $n=1$.

Now assume that equation \eqref{solution} has a solution for $n \leq
m-1$, consider the $n =m$ case. Recall that $z^{(i)} \in \O_E$
represents the $i$-th component of $z\in W_m (\O_E)$. Set $Z_0 =
(Z^{(0)}_0, \dots, Z^{(m-1)}_0)$ where $Z_0^{(m-1)}=0$ and $(Z_0^{(0)},
\dots , Z^{(m-2)}_0)$ is the solution of \eqref{solution} in $n=m-1$
case. Now set $Z_{l+1}= [\pi_s] ^{-N} [\beta] ^{-1}(\varphi(X + [\beta]
Z_l)\tilde B- [\pi_s]^NX)$. Since $(Z_0^{(0)}, \dots , Z^{(m-2)}_0)$ is
the solution of \eqref{solution} in $n=m-1$ case, we see that $Z^{(i)}_l
= Z_{l+1}^{(i)}$ for all $l$ and $i=0, \dots, m-2$.  Now it suffices to
check that $Z_{l+1}$ has coefficients in $W_m (\O_E)$ and $Z_l$
converges.

Since $\varphi(X+ [\beta] Z_l)= \varphi (X)+ \varphi ([\beta] Z_l) $ in
$W_m (\O_E/ p)$, we have $\varphi(X+ [\beta] Z_l) = \varphi (X) +
\varphi ([\beta] Z_l) + C'$ with coefficients of $C'$ in $W_m (p\O_E)$.
Since $W_m (p\O_E) \subset [\alpha] W_m (\O_E)$, we can write $C' =
[\alpha] C$ with coefficients of $C$ in $W_m (\O_E)$.
Hence $\varphi(X + [\beta]Z_l)\tilde B- [\pi_s]^NX= (\varphi(X) \tilde B
- [\pi_s]^N X)+ [\beta]^{p} \varphi (Z_l)\tilde B + [\alpha]C \tilde B =
[\alpha] Q\tilde B+ [\beta]^{p} \varphi (Z_l)\tilde B + [\alpha] C
\tilde B$. Since $(p-1)v_K(\beta) > v_K (\pi_s ^N)$, we see that
$Z_{l+1}$ is well defined. Now $[\pi_s]^{N} [\beta] (Z_{l+1}- Z_l)=
(\varphi(X + [\beta] Z_l)- \varphi (X + [\beta] Z_{l-1}) )\tilde B =W
\tilde B $, where
\begin{eqnarray*}
  W&=& \varphi(X + [\beta] Z_l)- \varphi (X + [\beta] Z_{l-1})\\
  &=& \varphi(X + [\beta]  Z_{l-1} + [\beta] (Z_l-Z_{l-1}))-\varphi (X +
  [\beta]  Z_{l-1})\\&=& \varphi (V + [\beta] (Z_l-Z_{l-1}))-\varphi
  (V)
\end{eqnarray*}
with $V = X + [\beta] Z_{l-1}$. Since $Z^{(i)}_l = Z_{l+1}^{(i)}$ for
all $l$ and $0 \leq i \leq m-2$, $W^{(i)}= 0$ for $i =0, \dots, m-2$ and
$$W^{(m-1)} = \sum_{i=1}^{p} {p \choose i} (V^{(m-1)})^{p-i} (\beta
^{p^{m-1}} (Z^{(m-1)}_l - Z^{(m-1)}_{l-1 }))^{i}.$$

Hence $v_K ((\pi_s^{N} \beta)^{p^{m-1}}) + v_K (Z^{(m-1)}_{l+1} -
Z^{(m-1)}_{l }) \geq v_K (\beta ^{p^{m}}) +v_K (Z^{(m-1)}_l -
Z^{(m-1)}_{l-1 }) $ if $i=p$ and
$$v_K ((\pi_s^{N}\beta)^{p^{m-1}}) + v_K (Z^{(m-1)}_{l+1} - Z^{(m-1)}_{l
})) \geq v_K (p) + v_K(\beta) + v_K (Z^{(m-1)}_l - Z^{(m-1)}_{l-1 })$$
if $1 \leq i \leq p-1$. Since $(p-1)v_K(\beta) > v_K(\pi_s ^N)$ and $v_K
((\pi_s^N\beta)^{ p^{m-1}}) \leq v_K (p)$, we get $$v_K (Z^{(m-1)}_{l+1}
- Z^{(m-1)}_{l })) \geq \gamma + v_K (Z^{(m-1)}_l - Z^{(m-1)}_{l-1 }),$$
where $\gamma = \min (v_K (\beta ), v_K(\beta^{p-1}\pi_s^{-N}))$. Hence
$Z_l$ converges and we are done.
\end{proof}

\begin{proof}[Proof of Theorem \ref{splitting}]
We have $G_s$-equivariant bijections of sets:
$$\begin{array}{rcll}
\tilde J^{(s), \bar K }_{n} (\M) & \simeq &
\rho^{(s)}_{a,b} (J^{(s)}_{n, b}(\M)) & \t{by Lemma \ref{lift} applied
with }E=\bar K\\
& \simeq & \rho_{a,b} (J_{n, b}(\M)) & \t{by Formula (\ref{eq:jncjnsc}}) \\
& \simeq & T|_{G_s} & \t{by Proposition \ref{prop:prba} and Theorem \ref{compatible}.}
\end{array}$$
Taking fixed points under $\gal(\bar K/E)$, we get a natural bijection
$\tilde J^{(s),E}_{n} (\M) \simeq T^{\gal(\bar K/E)}$. Hence again by
Lemma \ref{lift}, $T^{\gal(\bar K/E)} \simeq \rho^{(s),E}_{a,b}
(J^{(s),E}_{n,a} (\M))$, from what the theorem is easily deduced.
\end{proof}

\subsection{Proof of Theorem \ref{intro:ramgs}}
\label{subsec:ramgs}

Recall that $L_s=K_s L$ with $L$ the splitting field of $T$. Now we are
ready to bound the ramification of $L$.  To do this, we need to recall
the property $(P_m^{F/N})$ described by Fontaine (Proposition 1.5,
\cite{intro:fontaine}). First, in order to fix notations, we would like
to recall some definitions about ramification filtration, although we
refer to \emph{loc. cit.} and \cite{serre}, Chap. IV. for basic
properties.

Let $F/N$ be a Galois extension of $p$-adic fields, with Galois group
$G$. For all non negative real number $\lambda$, we define a normal
subgroup $G_{(\lambda)}$ of $G$ by
$$G_{(\lambda)} = \{ \sigma \in G \, / \, v_N(\sigma(x) - x) \geq
\lambda, \, \forall x \in \O_N\}$$
where $v_N$ is the valuation normalized by $v_N(N^\star) = \Z$ and
$\O_N$ is the ring of integers of $N$. We underline that we use here
conventions of \cite{intro:fontaine} and that they differ by a shift
with conventions of \cite{serre}, Chap. IV. By definition $G_{(\lambda)}$
is called the \emph{lower ramification filtration} of $G$. Now, let
$\varphi_{N/K} : [0,+\infty[ \to [0, +\infty[$ be the function defined
by
$$\varphi_{N/K}(\lambda) := \int_0^\lambda \frac{\card G_{(t)}} {\card
G_{(1)}} dt.$$
It is increasing, continuous, concave, piecewise affine and bijective.
Let $\psi_{N/K}$ denote its inverse and set $G^{(\mu)} =
G_{(\psi_{N/K}(\mu))}$: it is the \emph{upper ramification filtration}.
Finally call $\lambda_{F/N}$ (resp. $\mu_{F/N}$) the last break in the
lower (resp. upper) ramification filtration of $G$, that is the infimum
of $\lambda$ (resp. $\mu$) such that $G_{(\lambda)} = 1$ (resp.
$G^{(\mu)} = 1$). Obviously $\mu_{F/N} = \varphi_{F/N}(\lambda_{F/N})$.

\begin{prop}[Fontaine]
\label{fontaine}
Let $F$ and $N$ be finite extensions of $K$ with $N \subset F \subset
\bar K$ and $F$ is Galois. For any positive real number $m$, consider the following
property
$$(P^{F/N}_m)
\begin{cases} \text{For any algebraic extension } E \t{ over } N. \text{
If there exists an } \O_N \text{-algebra} \\
\text{homomorphism } \O_F \to \O_E/\a^{>m}_E, \text{ then there
exists a }N\text{-injection } F \inj E.
\end{cases}$$
If $(P^{F/N}_m)$ is true, then $\mu_{F/N} \leq e_{N/K} m + \frac
1{e_{F/N}}$ where $e_{N/K}$ (resp. $e_{F/N}$) is the ramification index
of $N/K$ (resp. of $F/N$).
\end{prop}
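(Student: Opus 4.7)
The proof is a contrapositive: assuming $(P_m^{F/N})$ holds, suppose for contradiction that $\mu_{F/N} > e_{N/K}m + 1/e_{F/N}$. Via Herbrand's function this amounts to the existence of a nontrivial $\sigma \in \gal(F/N)$ lying in the lower ramification group $G_{(\lambda_{F/N})}$ with $\varphi_{F/N}(\lambda_{F/N}) = \mu_{F/N} > e_{N/K}m + 1/e_{F/N}$. My plan is to manufacture an algebraic extension $E/N$ that admits an $\O_N$-algebra homomorphism $f : \O_F \to \O_E/\a_E^{>m}$ but \emph{no} $N$-injection $F \inj E$, directly contradicting $(P_m^{F/N})$.

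Fix a uniformizer $\pi_F$ of $F$ with minimal polynomial $P \in \O_N[X]$, let $\pi_F^{(1)} = \pi_F, \pi_F^{(2)}, \ldots, \pi_F^{(d)}$ denote its conjugates, and set $\delta := \max_{i>1} v_K(\pi_F - \pi_F^{(i)})$. The presence of the deep $\sigma$ forces $\delta \geq \lambda_{F/N}/e_{N/K}$. I then choose $\varepsilon \in \bar K$ with
\[
m - v_K(\D_{F/N}) \;<\; v_K(\varepsilon) \;\leq\; \delta
\]
(the nonemptiness of this window is exactly where the hypothesis enters, see below), set $y := \pi_F + \varepsilon$, and put $E := N(y)$. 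A Taylor expansion combined with $P(\pi_F)=0$ yields $v_K(P(y)) \geq v_K(P'(\pi_F)) + v_K(\varepsilon) = v_K(\D_{F/N}) + v_K(\varepsilon) > m$, so the assignment $\pi_F \mapsto y$ defines a legitimate $\O_N$-algebra map $f : \O_F \to \O_E/\a_E^{>m}$. On the other hand, the bound $v_K(\varepsilon) \leq \delta$ combined with a degree count rules out any $N$-embedding $F \inj E$: such an embedding would place a conjugate of $\pi_F$ in $E$ within Krasner range of $y$, forcing $[E:N] \geq [F:N]$, whereas a careful choice of $\varepsilon$ makes $[E:N]$ strictly smaller.

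The quantitative core is thus the nonemptiness of $m - v_K(\D_{F/N}) < v_K(\varepsilon) \leq \delta$. Using the integral formula $v_F(\D_{F/N}) = \int_0^\infty (|G_{(\lambda)}| - 1)\,d\lambda$ (Serre, \emph{Corps Locaux}, Chap.~IV) and $\mu_{F/N} = \varphi_{F/N}(\lambda_{F/N})$, the inequality $\delta > m - v_K(\D_{F/N})$ can be rearranged into $\mu_{F/N} \leq e_{N/K}m + 1/e_{F/N}$; the extra $1/e_{F/N}$ reflects the discrete jump of $\varphi_{F/N}$ at the last break $\lambda_{F/N}$, where the lower ramification group drops from nontrivial to trivial.

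The main obstacle is precisely this valuation bookkeeping: reconciling the Krasner condition $v_K(\varepsilon) \leq \delta$ with the reduction condition $v_K(\varepsilon) > m - v_K(\D_{F/N})$ and recovering the exact constant $e_{N/K}m + 1/e_{F/N}$ requires keeping careful track of both the integral expression for the different and the shift between Fontaine's and Serre's upper-numbering conventions flagged in the introduction.
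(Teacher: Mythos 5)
The paper itself offers no proof of this proposition: it is quoted verbatim from Fontaine (Proposition 1.5 of \cite{intro:fontaine}), so your attempt has to stand on its own. Your overall strategy is the right one --- assume $(P^{F/N}_m)$ and try to manufacture an extension $E$ admitting an $\O_N$-algebra map $\O_F \to \O_E/\a_E^{>m}$ without an $N$-embedding $F \inj E$ --- and you have correctly identified the relevant quantities ($\delta$, the different, the last break, Krasner). But the central step is missing. Taking $y = \pi_F + \varepsilon$ and $E = N(y)$ for some $\varepsilon$ with $v_K(\varepsilon) \leq \delta$ gives no control on $E$ whatsoever: Krasner's lemma says that $v_K(\varepsilon) > \delta$ \emph{forces} $F \subseteq E$, not that $v_K(\varepsilon) \leq \delta$ prevents it (take $\varepsilon \in N$, so that $E = F$), and for a generic $\varepsilon$ the degree $[N(\pi_F+\varepsilon):N]$ is typically \emph{larger} than $[F:N]$, not smaller. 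The assertion that ``a careful choice of $\varepsilon$ makes $[E:N]$ strictly smaller'' is precisely the content of the proposition and cannot be waved through. Note also that you cannot escape by taking $y$ in a proper subfield $E \subsetneq F$: if $F/N$ is totally and wildly ramified, every $z \in E$ has $v_F(z) \in e_{F/E}\mathbb{Z}$, hence $v_F(\pi_F - z) \leq 1$, which is strictly smaller than $\max_{\sigma \neq 1} v_F(\pi_F - \sigma\pi_F)$; so no element of a proper subfield comes anywhere near Krasner range of $\pi_F$.

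The standard repair (and, as far as we know, Fontaine's actual argument) is to perturb the minimal polynomial rather than the root: for $c \in \O_N$ with $v_K(c) > m$, let $y$ be a root of $P(X) + c\,X^k$, so that $[N(y):N] \leq [F:N]$ by construction and $v_K(P(y)) = v_K(c) + k\,v_K(y) > m$ automatically; $(P^{F/N}_m)$ then forces this perturbed polynomial to have a root $z \in \O_F$, i.e.\ forces $v_N(P(z))$ to realize the prescribed value. Writing $v_N(P(z)) = \sum_{\sigma} \min\bigl(j, v_N(x - \sigma x)\bigr)$ with $j = \max_{\sigma} v_N(z - \sigma x)$, one sees that the value set of $v_N \circ P$ on $\O_F$ has a gap of length at least $p/e_{F/N}$ immediately below $\mu_{F/N}$ --- this is the jump of slope of $\varphi_{F/N}$ at the last break --- and the bound $\mu_{F/N} \leq e_{N/K}\,m + 1/e_{F/N}$ falls out. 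Your bookkeeping, by contrast, would yield $\mu_{F/N} \leq e_{N/K}\,m$ with no $1/e_{F/N}$ term (your window is nonempty exactly when $\mu_{F/N} > e_{N/K}\,m$, by the identity $e_{N/K}v_K(\D_{F/N}) = \mu_{F/N} - \lambda_{F/N}$ that the paper cites); that term is not decorative, which is another sign the construction cannot work as stated. Two smaller points: $\O_F = \O_N[\pi_F]$ requires $F/N$ totally ramified (otherwise use a general monogenic generator of $\O_F$ over $\O_N$), and your Taylor estimate $v_K(P(y)) \geq v_K(P'(\pi_F)) + v_K(\varepsilon)$ is only valid when $v_K(\varepsilon) \geq \delta$, i.e.\ exactly at the endpoint of your window.
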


We will also need the following corollary:

\begin{co}
\label{different}
If $(P^{F/N}_m)$ holds for a positive real number $m$, then $v_K
(\D_{F/N}) < m$.
\end{co}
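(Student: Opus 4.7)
The plan is to combine Proposition~\ref{fontaine} with the classical formula expressing the different as an integral against the ramification filtration. By Proposition~\ref{fontaine}, the hypothesis $(P^{F/N}_m)$ forces $\mu_{F/N} \leq e_{N/K}m + 1/e_{F/N}$, i.e.\ $G^{(\mu)} = 1$ for all $\mu > e_{N/K}m + 1/e_{F/N}$. So the upper ramification filtration of $G = \gal(F/N)$ is entirely supported on an interval of length at most $e_{N/K}m + 1/e_{F/N}$.

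To pass from this ramification bound to a different bound, I would invoke Serre's classical formula (Chap.~IV, Proposition~4 of \cite{serre}) $v_F(\D_{F/N}) = \sum_{i \geq 0} (\#G_i - 1)$, rewritten in the paper's $v_N$-normalized convention as
\[
v_N(\D_{F/N}) \;=\; \int_0^{\infty}\bigl(\#G_{(\lambda)}-1\bigr)\,d\lambda.
\]
Dividing by $e_{N/K}$ and changing variables via $\psi_{F/N}$ to upper numbering, this becomes an integral of $(1 - 1/\#G^{(\mu)})$ over $[0,\mu_{F/N}]$ (weighted by the normalizing factor $\#G_{(1)}$ appearing in the paper's $\varphi_{F/N}$). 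Since the integrand is strictly less than $1$ on this interval (the group $G^{(\mu)}$ being a nontrivial finite group there), the bound $\mu_{F/N} \leq e_{N/K}m + 1/e_{F/N}$ should combine with the slack $1-1/\#G^{(\mu)}<1$ to yield the strict inequality $v_K(\D_{F/N}) < m$.

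The main obstacle will be reconciling conventions --- Fontaine's and Serre's lower ramification filtrations differ by a shift, and the paper's $\varphi_{F/N}$ uses the unusual normalization by $\#G_{(1)}$ rather than by the order of the inertia subgroup --- carefully enough to see that the $1/e_{F/N}$ correction appearing in Proposition~\ref{fontaine} is exactly absorbed by the strictness $1-1/\#G^{(\mu)}<1$ of the integrand. Only after these numerical constants are verified to line up does one obtain the sharp bound $v_K(\D_{F/N}) < m$ rather than merely $v_K(\D_{F/N}) \leq m + \text{lower order}$.
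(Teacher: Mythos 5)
Your overall strategy is the same as the paper's: feed the bound $\mu_{F/N}\leq e_{N/K}\,m+1/e_{F/N}$ from Proposition \ref{fontaine} into an identity expressing the different in terms of the ramification filtration. The paper does this by quoting Proposition 1.3 of \cite{intro:fontaine}, which (after disposing of the trivial unramified case) gives the closed form $e_{N/K}\,v_K(\D_{F/N})=\mu_{F/N}-\lambda_{F/N}$, and then concludes from $\lambda_{F/N}\geq 1$ and $e_{F/N}>1$ that $\lambda_{F/N}-e_{F/N}^{-1}>0$. Your integral formula is that identity in disguise (Herbrand's change of variables turns $\int_0^\infty(\card G_{(\lambda)}-1)\,d\lambda$ into $\mu_{F/N}$ minus a deficit term), so the route is not genuinely different.

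The gap is that you stop exactly where the work begins. The qualitative observation that the integrand $1-1/\card G^{(\mu)}$ is $<1$ only yields $v_N(\D_{F/N})<\mu_{F/N}\leq e_{N/K}\,m+1/e_{F/N}$, i.e.\ $v_K(\D_{F/N})<m+\frac{1}{e_{N/K}\,e_{F/N}}$, which is strictly weaker than the claim. To absorb the $1/e_{F/N}$ you must bound the deficit $\int_0^{\mu_{F/N}}\frac{d\mu}{\card G^{(\mu)}}$ from below; under the change of variables this deficit equals $\lambda_{F/N}/\card G_{(1)}$, so what is actually needed is the inequality $\lambda_{F/N}\geq 1$, together with the identification of $G_{(1)}$ (of order $e_{F/N}$ in the relevant normalization) and the strictness supplied by $e_{F/N}>1$ in the ramified case. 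You explicitly defer this verification (``only after these numerical constants are verified to line up\dots''), but it is the entire content of the corollary: without it one gets only $v_K(\D_{F/N})\leq m+(\text{positive error})$. As written this is a plan rather than a proof; to complete it you must either carry out the Herbrand computation and the bound $\lambda_{F/N}\geq 1$ explicitly, or invoke Fontaine's identity directly as the paper does. You should also treat the unramified case separately (there $v_K(\D_{F/N})=0$ and the statement is trivial), since your parenthetical ``the group $G^{(\mu)}$ being a nontrivial finite group there'' silently assumes $F/N$ is ramified.
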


\begin{proof}
If $F/N$ is unramified, $v_K (\D_{F/N}) = 0$ and the corollary is
obvious. If not, by Proposition 1.3 of \cite{intro:fontaine}, we see
that $e_{N/K} v_K(\D_{F/N}) = \mu_{F/N} - \lambda_{F/N}$. If $(P^{F/N}_m)$
holds then $v_K(\D_{F/N}) \leq m - e^{-1}_{N/K}(\lambda_{F/N} -
e_{F/N}^{-1})$. Conclusion then follows from that $e_{F/N} > 1$ and
$\lambda_{F/N} \geq 1$ (both are true because $F/N$ is assumed to
be ramified).
\end{proof}

We claim that $(P_m^{L_s/K_s})$ holds for $m = a p^{n-1-s}$. To see
this, pick $f : \O_{L_s} \to \O_E/\a^{>m}_E$ an $\O_{K_s}$-algebra
homomorphism. Obviously, for any real number $c \in [0, m]$, $f$ induces
a map $f_c : \O_{L_s}/\a^{>c}_{L_s} \to \O_E/\a^{>c}_E$.

\begin{lemma}
\label{connect2}
\begin{enumerate}
\item For any $c \leq m$, $f_c$ is injective.
\item For any $c \leq a$, $f_{c p^{n-1-s}}$ induces an injection
$$W_n(\O_{L_s}/ p)/ [\a_{L_s}^{>c/p^s}] \inj W_n(\O_E/p)/[\a_E^{>c/p^s}].$$
\end{enumerate}
\end{lemma}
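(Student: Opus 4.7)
The plan rests on a valuation-preservation property that implies both (1) and (2) with little extra effort: \emph{for any $y \in \O_{L_s}$ with $v_K(y) \leq m$, every lift to $\O_E$ of $f(y) \in \O_E/\a_E^{>m}$ has $v_K$-valuation equal to $v_K(y)$} (and $f(y)$ vanishes in $\O_E/\a_E^{>m}$ as soon as $v_K(y) > m$). Granting this, (1) follows at once: if $f_c(y) = 0$ for some $c \leq m$, then any lift of $f(y)$ lies in $\a_E^{>c}$, forcing $v_K(y) > c$, i.e.\ $y \in \a_{L_s}^{>c}$; and the same property shows that $f$ does factor through $\O_{L_s}/\a_{L_s}^{>c}$, so $f_c$ is well-defined in the first place.

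To establish valuation-preservation, I would write $y = \pi_{L_s}^k u$ with $u \in \O_{L_s}^\times$. Since $f$ is a ring homomorphism, $f(u)$ is a unit of $\O_E/\a_E^{>m}$, and using that $1+r$ is invertible in $\O_E$ for any $r \in \a_E^{>m}$, every lift $\tilde u \in \O_E$ of $f(u)$ is itself a unit. The crux is to show that a lift $\tilde g \in \O_E$ of $f(\pi_{L_s})$ has $v_K(\tilde g) = 1/e_{L_s/K}$. Applying $f$ to the identity $\pi_{L_s}^{e_{L_s/K_s}} = \pi_{K_s} \cdot w$ (with $w \in \O_{L_s}^\times$) yields
$$\tilde g^{e_{L_s/K_s}} \equiv \pi_{K_s} \cdot \tilde w \pmod{\a_E^{>m}}$$
with $\tilde w$ a unit of $\O_E$; the strict ultrametric then pins down $v_K(\tilde g^{e_{L_s/K_s}}) = p^{-s}$ as soon as $m > p^{-s}$, and this last inequality is automatic from $a \geq p/(p-1)$ and $n \geq 1$. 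Multiplicativity then gives $v_K(\tilde g^{k}\tilde u) = v_K(y)$, as wanted.

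For (2), I would invoke the $\O_F$-analogue of Lemma~\ref{lem:aRc}.(1) to describe $[\a_F^{>c/p^s}] \subset W_n(\O_F/p)$ as the set of vectors $(x_0,\ldots,x_{n-1})$ with $v_K(x_i) > cp^i/p^s$ for every $i$. The ring map $f_{cp^{n-1-s}}$ then induces a map on the Witt-vector quotients that, on representatives, corresponds to applying $f_{cp^i/p^s}$ in the $i$-th component. Injectivity of the induced map thus reduces to the injectivity of $f_{cp^i/p^s}$ for $i = 0, \ldots, n-1$; since $c \leq a$ forces $cp^i/p^s \leq cp^{n-1-s} \leq ap^{n-1-s} = m$, each of these instances falls within the range already handled by (1).

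The main obstacle is the careful bookkeeping around the quotient $\O_E/\a_E^{>m}$: one has to check that the strict ultrametric inequality controlling $\tilde g^{e_{L_s/K_s}}$ really holds in the range of $s$ prescribed, and that Witt-vector functoriality in (2) translates precisely into the componentwise invocation of (1). Both are verifications rather than conceptual difficulties.
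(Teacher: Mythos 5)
Your proposal is correct. For part (1) the paper simply cites Lemma 4.4 of Hattori, and your valuation-preservation argument (a lift $\tilde g$ of $f(\pi_{L_s})$ satisfies $\tilde g^{e_{L_s/K_s}} \equiv \pi_{K_s} \tilde w \pmod{\a_E^{>m}}$ with $\tilde w$ a unit, which pins down $v_K(\tilde g)$ because $m > p^{-s}$) is exactly the standard argument behind that reference; for part (2), your use of the componentwise description of $[\a_F^{>c/p^s}]$ to see that the Witt-vector quotient depends only on $\O_F/\a_F^{>c p^{n-1-s}}$, followed by the componentwise invocation of (1), is precisely the paper's argument.
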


\begin{proof}
(1) It is the same as the proof of Lemma 4.4 of \cite{intro:hattori}.

(2) Using an analogue of Lemma \ref{lem:aRc}.(1), one easily prove that
natural projections $\O_{L_s}/p \to \O_{L_s}/\a_{L_s}^{>c p^{n-1-s}}$
and $\O_E/p \to \O_E/\a_E^{>c p^{n-1-s}}$ induce isomorphisms
\begin{eqnarray*}
W_n(\O_{L_s}/ p)/[\a_{L_s}^{>c/p^s}] & \simeq &
W_n(\O_{L_s}/ \a_{L_s}^{>c p^{n-1-s}})/ [\a_{L_s}^{>c/p^s}] \\
W_n(\O_E/ p)/[\a_E^{>c/p^s}] & \simeq &
W_n(\O_E/ \a_E^{>c p^{n-1-s}})/ [\a_E^{>c/p^s}].
\end{eqnarray*}
Hence $f_{c p^{n-1-s}}$ indeed induces a map $$W_n(\O_{L_s}/\a_{L_s}^{>c p^{n-1-s}})/
[\a_{L_s}^{>c/p^s}]  \to W_n(\O_E/ \a_E^{>c p^{n-1-s}})/ [\a_E^{>c/p^s}]$$ and checking injectivity is
now straitforward using (1).
\end{proof}

\noindent
Thus, we get injections:
$$\rho_{b,a}^{(s),L_s}(J_{n,a}^{(s),L_s}(\M)) \inj
\rho_{b,a}^{(s),E}(J_{n,a}^{(s),E}(\M)) \inj
\rho_{b,a}^{(s)}(J_{n,a}^{(s)}(\M)) = \rho_{b,a}(J_{n,a}(\M) \simeq T$$
the first one being induced by $f$ (which is obviously compatible with
Frobenius since it is a ring homomorphism). By Theorem \ref{splitting},
LHS is isomorphic to $T$. The composite map is then an injective
endomorphism of $T$. Consequently, it is an isomorphism because $T$ is
finite. It follows that $\rho_{b,a}^{(s),E} (J_{n,a}^{(s),E}(\M)) \inj
\rho_{b,a}^{(s)}(J_{n,a}^{(s)}(\M))$ is bijective and then, applying
again Theorem \ref{splitting}, we get $L_s \subset E$. Property
$(P_m^{L_s/K_s})$ is proved.

\begin{co}
\label{co:ramgs}
For all integer $s > s_0(a) = n + \log_p(\frac{N} {e(p-1)})$ and all
real number $\mu > \frac{N p^n} {p-1}$, $G_s^{(\mu)}$ acts trivially
on $T$.
\end{co}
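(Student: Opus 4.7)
The strategy is to feed the property $(P_m^{L_s/K_s})$ established in the preceding paragraphs (for $m = a p^{n-1-s}$) into Fontaine's Proposition \ref{fontaine}, applied to the Galois extension $L_s/K_s$, and then translate the resulting bound on $\mu_{L_s/K_s}$ into the claim about $G_s^{(\mu)}$.

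First, I would compute the key input into Proposition \ref{fontaine}. Since $K_s = K(\pi_s)$ with $\pi_s^{p^s} = \pi$ and $\pi$ is a uniformizer of $K$, the extension $K_s/K$ is totally ramified of degree $p^s$, so $e_{K_s/K} = p^s$. Substituting $a = \frac{pN}{p-1}$ yields
$$e_{K_s/K} \cdot m \;=\; p^s \cdot \frac{pN}{p-1} \cdot p^{n-1-s} \;=\; \frac{N p^n}{p-1}.$$
Proposition \ref{fontaine} then gives
$$\mu_{L_s/K_s} \;\leq\; \frac{N p^n}{p-1} + \frac{1}{e_{L_s/K_s}}.$$

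Second, I would transfer this to the absolute Galois group $G_s$. By construction $L_s$ is the fixed field in $\bar K$ of $\ker(\rho|_{G_s})$, so the $G_s$-action on $T$ factors through the finite quotient $\gal(L_s/K_s)$. Standard compatibility of the upper numbering with quotients (Herbrand) identifies the image of $G_s^{(\mu)}$ in $\gal(L_s/K_s)$ with $\gal(L_s/K_s)^{(\mu)}$, and the latter vanishes as soon as $\mu$ exceeds $\mu_{L_s/K_s}$. Hence for any $\mu$ above the Fontaine bound, $G_s^{(\mu)}$ acts trivially on $T$.

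The main technical point I anticipate is closing the small gap between the bound $\frac{N p^n}{p-1} + \frac{1}{e_{L_s/K_s}}$ arising from Fontaine's proposition and the cleaner $\frac{N p^n}{p-1}$ claimed in the corollary. I would handle this by invoking that the last upper-ramification break $\mu_{L_s/K_s}$ is the image under $\psi_{L_s/K_s}$ of an integer lower-ramification break, hence lies in the discrete set $\tfrac{1}{e_{L_s/K_s}}\Z$; combined with the strict inequality $\mu > \frac{N p^n}{p-1}$ assumed in the statement, this discreteness is designed to absorb the residual $\frac{1}{e_{L_s/K_s}}$ term. All the substantive work has already been done in establishing $(P_m^{L_s/K_s})$ via Theorem \ref{splitting} and Lemma \ref{connect2}; the corollary itself is, beyond this bookkeeping, a direct citation of Fontaine's proposition.
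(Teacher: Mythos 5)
Your route is the paper's: establish $(P_m^{L_s/K_s})$ for $m = a p^{n-1-s}$, feed it into Proposition \ref{fontaine} with $e_{K_s/K}=p^s$ to get $\mu_{L_s/K_s} \leq \frac{Np^n}{p-1} + \frac{1}{e_{L_s/K_s}}$, and pass to $G_s^{(\mu)}$ via Herbrand. The gap is in your final ``bookkeeping'' step. Knowing only that $e_{L_s/K_s}\mu_{L_s/K_s}$ is an integer does \emph{not} absorb the residual term: if $\frac{e_{L_s/K_s} N p^n}{p-1}$ happens to be an integer, then $\mu_{L_s/K_s} = \frac{Np^n}{p-1} + \frac{1}{e_{L_s/K_s}}$ is compatible with both your upper bound and your discreteness constraint (and if it is not an integer, $\mu_{L_s/K_s}$ could still equal $\frac{1}{e_{L_s/K_s}}\bigl\lceil \frac{e_{L_s/K_s}Np^n}{p-1}\bigr\rceil > \frac{Np^n}{p-1}$). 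In either scenario, a $\mu$ strictly between $\frac{Np^n}{p-1}$ and $\mu_{L_s/K_s}$ would make $G_s^{(\mu)}$ surject onto a nontrivial subgroup of $\gal(L_s/K_s)$, which acts nontrivially on $T$ since $L_s$ is the splitting field over $K_s$. The strict inequality $\mu > \frac{Np^n}{p-1}$ in the statement cannot rescue this; you genuinely need $\mu_{L_s/K_s} \leq \frac{Np^n}{p-1}$.

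The missing ingredient is a strictly stronger divisibility, which is how the paper closes the gap. First dispose of the case where $L_s/K_s$ is tamely ramified: there $\mu_{L_s/K_s} \leq 1 \leq \frac{Np^n}{p-1}$ and you are done (this case must be separated anyway, since the argument below uses wildness). If $L_s/K_s$ is wildly ramified, then $e_{L_s/K_s}\mu_{L_s/K_s} = \int_0^{\lambda}\card G_{(t)}\,dt = \sum_{i=1}^{\lambda}\card G_{(i)}$ with $\lambda = \lambda_{L_s/K_s}\geq 2$, and each group in this sum contains the last nontrivial ramification group $G_{(\lambda)}$, which is a nontrivial $p$-group; hence $e_{L_s/K_s}\mu_{L_s/K_s} \in p\Z$. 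Since also $e_{L_s/K_s} N p^n \in p\Z$ (as $n\geq 1$), the inequality $e_{L_s/K_s}\mu_{L_s/K_s}(p-1) \leq e_{L_s/K_s}Np^n + (p-1)$ would, if $e_{L_s/K_s}\mu_{L_s/K_s}(p-1) > e_{L_s/K_s}Np^n$, place a positive multiple of $p$ in the interval $(0,p-1]$, which is absurd. Hence $\mu_{L_s/K_s}\leq \frac{Np^n}{p-1}$, as required. With this mod-$p$ refinement in place of your plain integrality argument, the proof is complete and identical in structure to the paper's.
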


\begin{proof}
If $L_s/K_s$ is tamely ramified, then $\mu_{L_s/K_s} \leq 1$ and we
are done (note that $\frac{N p^n} {p-1} \geq 1$). Otherwise, by
Proposition \ref{fontaine}, we obtain:
$$\mu_{L_s/K_s} \leq e_{K_s/K} m + \frac 1{e_{L_s/K_s}} = \frac{N p^n}
{p-1} + \frac 1{e_{L_s/K_s}}.$$
Since $L_s/K_s$ is wildly ramified, we know that $e_{L_s/K_s}
\mu_{L_s/K_s}$ lies in $p \Z$. From $e_{L_s/K_s} \mu_{L_s/K_s} (p-1)
\leq e_{L_s/K_s} N p^n + (p-1)$ we then deduce $e_{L_s/K_s}
\mu_{L_s/K_s} (p-1) \leq e_{L_s/K_s} N p^n$, \emph{i.e.} $\mu_{L_s/K_s}
\leq \frac{N p^n}{p-1}$.
\end{proof}

Now taking $N = ern$, we get Theorem \ref{intro:ramgs}. (Recall that
$ern$ is not in general the best value one can choose (expect for
$n=1$). See \S \ref{subsec:sharpness} for a discussion about this.)

As a conclusion, we would like to emphasize that previous Corollary is
valid for all choice of $\pi$ and compatible system $(\pi_s)$ of
$p^s$-roots of $\pi$. Therefore
$$L \subset \bigcap_{s,\mu,K_s} K_s^{(\mu)}$$
where $K_s^{(\mu)}$ is the extension of $K_s$ cut by $G_s^{(\mu)}$
and where the intersection runs over all $(s,\mu)$ satisfying conditions
of the Corollary and all extensions $K_s$ over $K$ generated by a
$p^s$-root of some uniformizer of $K$. It should maybe be interesting to
understand better this intersection.

\subsection{Proof of Theorem \ref{intro:main}}

Consider $\alpha$ and $\beta$ such that $\frac N {e(p-1)} = p^\alpha
\beta$ with $\alpha \in \mathbb N$ and $\frac 1 p < \beta \leq 1$. (If
$N = ern$, then $\alpha$ and $\beta$ are those of Theorem
\ref{intro:main}). From now on, we fix $s = n + \alpha$. One certainly
have that $s \geq s_0(a) = n + \log_p(\frac N {e(p-1)})$ as it was
assumed at the beginning of this section.

This is very easy now to bound valuation of $\D_{L/K}$. We just write:
\begin{eqnarray*}
v_K(\D_{L_s/K}) & = & 1 + es - \frac 1{p^s} +
v_K(\D_{L_s/K_s}) < 1 + es - \frac 1{p^s} + ap^{n-1-s} \\
& = & 1 + es - \frac 1{p^s} + e p^{\alpha+n-s} \beta =
1 + e(n + \alpha + \beta) - \frac 1{p^{n + \alpha}}
\end{eqnarray*}
where the inequality $v_K(\D_{L_s/K}) < ap^{n-1-s}$ follows from
Corollary \ref{different} and the fact that $(P_{ap^{n-1-s}}^{L_s/K_s})$
holds as it was seen before. Now, since $L$ is a subextension of $L_s$,
we have $v_K(\D_{L/K}) \leq v_K(\D_{L_s/K})$ and the previous bound
works also for $v_K(\D_{L/K})$. Taking $N = ern$, we get Theorem
\ref{intro:main}.(2).

To bound $u_{L/K}$, we first need a kind of transitivity formula:

\begin{lemma}
\label{lem:swancompo}
Let $N \subset F$ two finite Galois extensions of $K$. Then
$\mu_{F/K} = \max (\mu_{N/K}, \varphi_{N/K}(\mu_{F/N}))$.
\end{lemma}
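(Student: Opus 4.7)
The plan is to reduce the statement to the two standard compatibilities of ramification filtrations, namely Herbrand's theorem (compatibility of upper numbering with quotients), compatibility of lower numbering with subgroups, and transitivity of the Herbrand $\varphi$-function. Set $G = \gal(F/K)$ and $H = \gal(F/N) \trianglelefteq G$, so that $G/H = \gal(N/K)$. Recall (in Fontaine's convention) that $\mu_{F/K}$ is characterized as $\inf \{\mu \,/\, G^{(\mu)} = 1\}$, and similarly for $\mu_{N/K}$ and $\mu_{F/N}$.

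The first step is to translate ``$G^{(\mu)} = 1$'' into two independent conditions on the pair $(G/H, H)$: namely $G^{(\mu)} H / H = 1$ (i.e.\ $G^{(\mu)} \subset H$) and $G^{(\mu)} \cap H = 1$. By Herbrand's theorem, $(G/H)^{(\mu)} = G^{(\mu)} H / H$, so the first condition is exactly $\mu \geq \mu_{N/K}$. For the second condition, one uses the fact that lower numbering is compatible with subgroups: $H_{(\lambda)} = H \cap G_{(\lambda)}$. Combining this with the identity $\varphi_{F/K} = \varphi_{N/K} \circ \varphi_{F/N}$ (transitivity of Herbrand's function), one derives
$$H^{(\nu)} = H_{(\psi_{F/N}(\nu))} = H \cap G_{(\psi_{F/N}(\nu))} = H \cap G^{(\varphi_{N/K}(\nu))}.$$
Equivalently, $H \cap G^{(\mu)} = H^{(\psi_{N/K}(\mu))}$, so the condition $G^{(\mu)} \cap H = 1$ is equivalent to $\psi_{N/K}(\mu) \geq \mu_{F/N}$, i.e.\ $\mu \geq \varphi_{N/K}(\mu_{F/N})$.

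Putting the two conditions together shows
$$\mu_{F/K} = \inf\bigl\{\mu \,/\, \mu \geq \mu_{N/K} \text{ and } \mu \geq \varphi_{N/K}(\mu_{F/N})\bigr\} = \max\bigl(\mu_{N/K},\, \varphi_{N/K}(\mu_{F/N})\bigr),$$
which is the claim. The only mildly delicate point is making sure the translation respects the conventions of \cite{intro:fontaine} (which differ by a shift from those of \cite{serre}); since both Herbrand's theorem and the transitivity of $\varphi$ hold without change in either convention, no real obstacle arises, and the proof is essentially a bookkeeping exercise on the definitions.
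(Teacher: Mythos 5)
Your proposal is correct and follows essentially the same route as the paper: both rest on the subgroup compatibility $H_{(\lambda)} = H \cap G_{(\lambda)}$, Herbrand's theorem $(G/H)^{(\mu)} = G^{(\mu)}H/H$, and the transitivity $\varphi_{F/K} = \varphi_{N/K} \circ \varphi_{F/N}$, yielding the identity $H^{(\psi_{N/K}(\mu))} = H \cap G^{(\mu)}$ and then reading off when $G^{(\mu)}$ is trivial (the paper packages this last step as the exactness of $0 \to H^{(\psi_{N/K}(\mu))} \to G^{(\mu)} \to (G/H)^{(\mu)} \to 0$). The only cosmetic caveat is that with the paper's convention the conditions are strict inequalities $\mu > \mu_{N/K}$ and $\mu > \varphi_{N/K}(\mu_{F/N})$ rather than $\geq$, which does not affect the resulting infimum.
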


\begin{proof}
Let $G$ (resp. $H$) denote the Galois group of $F/K$ (resp. $F/N$).
Since $N$ is Galois over $K$, $H$ is a normal subgroup in $G$. Using
definition, one directly check $H_{(\lambda)} = G_{(\lambda)} \cap H$.
Using $\varphi_{F/K} = \varphi_{N/K} \circ \varphi_{F/N}$ (see
\cite{serre}, Chap. IV, Proposition 15), and taking $\lambda =
\psi_{F/K} (\mu)$, one obtain $H^{(\psi_{N/K}(\mu))} = G^{(\mu)} \cap H$.
On the other hand, Proposition 14 of \emph{loc. cit.} gives
$(G/H)^{(\mu)} = G^{(\mu)} H / H$. Combining both results, we see that
the sequence
$$0 \to H^{(\psi_{N/K}(\mu))} \to G^{(\mu)} \to (G/H)^{(\mu)} \to 0$$
is well defined and exact for all $\mu$. Conclusion is then easy.
\end{proof}

Let $N_s := K_s(\zeta_{p^s})$ and $F_s := L N_s$ where $\zeta_{p^s}$ is
a $p^s$-th root of unity. Note that $N_s$ and $F_s$ are Galois
extensions. We would like to apply Lemma \ref{lem:swancompo} with $N =
N_s$ and $F = F_s$. First we claim $(P_{a p^{n-1-s}} ^{F_s/N_s})$ holds.
Indeed, given an $\O_{N_s}$-algebra homomorphism $f : \O_{F_s} \to \O_E/
\a^{> m}_E$ (with $m = a p^{n-1-s}$), the restriction $f|_{\O_{L_s}} :
\O_{L_s} \to \O_E/ \a^{>m}_E$ is an $\O_{K_s}$-algebra homomorphism.
Then, by the same argument below Lemma \ref{connect2}, we have a
$K_s$-injection $L_s \to E $. On the on the hand, since $N_s\subset E$
and $L$ are Galois, we see that there exists an $N_s$-injection $F_s=
L_sN_s \to E$. So $(P_{a p^{n-1-s}} ^{F_s/N_s})$ holds.  By the same
argument as in proof of Corollary \ref{co:ramgs}, we show
$$\mu_{F_s/N_s} \leq e_{N_s/K} a p^{n-1-s} = e_{N_s/K} \: \frac
N{p^{\alpha}(p-1)} = e_{N_s/K} \: e \beta$$
where $e_{N_s/K}$ is the ramification index of $N_s/K$. Furthermore, by
Remark 5.5 of \cite{intro:hattori2}, we already know that $\mu_{N_s/K} =
1 + e(s+\frac 1 {p-1})$. Hence, it remains to give an estimation of
$\varphi_{N_s/K}$. For this, note that there exists $\sigma \in
\gal(N_s/K)$ such that $\sigma \pi_s = \zeta_p \pi_s$ where $\zeta_p$ is
a primitive $p$-th root of unity. Hence $v_K(\sigma \pi_s - \pi_s) =
\frac e{p-1} + \frac 1{p^s}$ which implies
$$\lambda_{N_s/K} \geq e_{N_s/K} \left(\frac e{p-1} + \frac 1{p^s}
\right).$$
Moreover, using the definition, we see that $\varphi_{N_s/K}$ is affine
with slope $\frac 1 {e_{N_s/K}}$ after $\lambda_{N_s/K}$. Thus, by
concavity, one get
\begin{eqnarray*}
\varphi_{N_s/K}(\lambda)
& \leq & 1 + e\left(s + \frac 1 {p-1}\right) +
\frac 1{e_{N_s/K}} ( \lambda - \lambda_{N_s/K})  \\
& \leq & 1 + es + \frac {\lambda}{e_{N_s/K}} - \frac 1 {p^s}.
\end{eqnarray*}
Now apply Lemma \ref{lem:swancompo}:
$$\mu_{L/K} \leq \mu_{F_s/K} \leq 1 + e\left(s + \max \left(\beta,
\frac 1 {p-1}\right)\right).$$
Since $s = n + \alpha$, Theorem \ref{intro:main}.(1) is shown.

\section{Some results and questions about lifts}
\label{sec:lift}

\newcommand{\calC}{\mathcal C}
\newcommand{\Repf}{\t{Rep}_{\Z_p}}
\newcommand{\Rept}{\t{Rep}_{\t{tor}}}
\newcommand{\Reppn}{\t{Rep}_{\Z/p^n \Z}}
\newcommand{\Repp}{\t{Rep}_{\Z/p\Z}}
\newcommand{\tr}{\t{tr}}
\setcounter{theorem}{0}
\renewcommand\thetheorem{\thesection.\arabic{theorem}}

In this last section, we discuss some ideas about possible converses for
Theorem \ref{intro:main}. Precisely, we wonder when a given torsion
representation of $G_K$ can be realized as a quotient of two lattices in
a semi-stable (or even crystalline) representation, eventually with
prescribed Hodge-Tate weights. Denote by $\Repf(G_K)$ (resp.
$\Rept(G_K)$, resp. $\Reppn(G_K)$) the category of all
$\Z_p$-representations of $G_K$ that are finitely generated and free
(resp. killed by a power of $p$, resp. killed by $p^n$) as a
$\Z_p$-module. For any full subcategory $\calC$ of $\Repf(G_K)$, one can
always raise the following question

\begin{question}
\label{question}
For any $T \in \Rept(G_K)$ (resp. $T \in \Reppn(G_K)$), does there exist
$L$ and $L'$ in $\calC$ such that $T \simeq L/L'$?
\end{question}

Obviously if $\calC$ is stable under subobject (which will in general be
true in interesting examples), it is enough to find $L$ together with a
surjective $G_K$-equivariant morphism $L \to T$. In the sequel, we will
call a \emph{lift} such a morphism $L \to T$. If $\calC$ is moreover
stable by direct sum, the problem can be further reduced as follows.

\begin{prop}
\label{prop:killedp}
Assume that $\calC$ is stable under subobjects and direct sums. Assume
also that any $T \in \Repp(G_K)$ admits a lift $L \in \calC$. Then the
answer to Question \ref{question} is ``yes''.
\end{prop}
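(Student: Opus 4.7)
Proof plan: I prove the proposition by induction on $n$ such that $p^n T = 0$. The case $n=1$ is exactly the hypothesis.

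For $n \geq 2$, consider the short exact sequence of $G_K$-modules
\[
0 \to pT \to T \to T/pT \to 0,
\]
in which $pT$ is killed by $p^{n-1}$ and $T/pT$ is killed by $p$. By the inductive hypothesis applied to $pT$, there exists $L_1 \in \calC$ ($\Z_p$-free) together with a $G_K$-equivariant surjection $\pi_1 \colon L_1 \twoheadrightarrow pT$; by the base hypothesis applied to $T/pT$, there exists $L_2 \in \calC$ ($\Z_p$-free) with $\bar\pi_2 \colon L_2 \twoheadrightarrow T/pT$. By closure under direct sums, $L := L_1 \oplus L_2$ lies in $\calC$, and it suffices to produce a $G_K$-equivariant surjection $L \twoheadrightarrow T$.

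The natural candidate is the map $(x,y) \mapsto \pi_1(x) + \phi(y)$, where $\phi \colon L_2 \to T$ is a $G_K$-equivariant lift of $\bar\pi_2$: any such combined map is automatically surjective (the $L_1$-component already hits $pT$, and Nakayama combined with the reduction $\bar\pi_2$ gives the rest), so the entire content of the induction step is to construct $\phi$. Since $L_2$ is $\Z_p$-free, some $\Z_p$-linear $\phi$ always exists; its failure to commute with Galois is encoded by a cocycle $c \in Z^1(G_K, \t{Hom}_{\Z_p}(L_2, pT))$, and the question reduces to showing that the class $[c] \in H^1(G_K, \t{Hom}_{\Z_p}(L_2, pT))$ can be forced to vanish.

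This cohomological vanishing is the main obstacle, and the place where both closure hypotheses on $\calC$ must really be used. My plan would be to replace $L_2$ by a sufficiently ``large'' lift in $\calC$ of an auxiliary $p$-torsion representation mapping onto $T/pT$ — for instance the lift in $\calC$ of a permutation-like module such as $\F_p[T/pT]$ — so that the space of admissible $\Z_p$-linear sections $\phi$ is big enough to realize $c$ as a coboundary. Subobject closure then ensures that throughout any further modification one stays inside $\calC$, while direct sum closure permits the enlargement. Once the obstruction is killed, the resulting $L \twoheadrightarrow T$ completes the inductive step; the hardest part of the argument is exactly this absorption of the $H^1$-obstruction, whereas the dévissage and direct-sum bookkeeping are routine.
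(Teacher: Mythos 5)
There is a genuine gap, and it sits exactly where you locate it yourself: the construction of a $G_K$-equivariant lift $\phi\colon L_2 \to T$ of $\bar\pi_2$. The obstruction class in $H^1(G_K, \t{Hom}_{\Z_p}(L_2, pT))$ has no reason to vanish, and your proposed remedy --- replacing $L_2$ by a ``sufficiently large'' lift of an auxiliary $p$-torsion module so that the cocycle becomes a coboundary --- is not an argument: enlarging $L_2$ changes $\bar\pi_2$ and the cocycle itself, and neither of the closure hypotheses on $\calC$ (subobjects, direct sums) gives you any control over this $H^1$. So the ``hardest part'', as you call it, is simply not proved, and I see no route to proving it along these lines.

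The paper's proof sidesteps the obstruction entirely by never trying to lift a map: it pulls back extensions instead. It uses the other d\'evissage, $0 \to T' \to T \to T'' \to 0$ with $T' = p^{n-1}T$ (killed by $p$) and $T'' = T/T'$ (killed by $p^{n-1}$), lifts $T''$ by induction to $L \in \calC$ with $f\colon L \twoheadrightarrow T''$, and forms the fiber product $M := T \times_{T''} L$. The projection $M \to T$ is automatically surjective --- no cohomology needed --- and $M$ sits in $0 \to T' \to M \to L \to 0$, which splits $\Z_p$-linearly because $L$ is free; hence $pM \simeq pL$ is finite free. A second fiber product $N := M \times_{M/pM} L'$ against a lift $L' \in \calC$ of the $p$-torsion module $M/pM$ then produces a finite free $N$ surjecting onto $M$, hence onto $T$; and $N \simeq pN \subset pL \times pL'$ lies in $\calC$ by the direct-sum and subobject hypotheses. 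Note also that with your decomposition $0 \to pT \to T \to T/pT \to 0$ the analogous trick degrades (the kernel $pT$ is not killed by $p$, so the pullback is not free-plus-$p$-torsion after multiplying by $p$ once), which is a further reason the paper orders the d\'evissage the other way. If you want to salvage your write-up, replace the direct-sum-plus-section construction by the fiber product construction; as it stands, the inductive step is missing.
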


\begin{proof}
We make an induction on $n$. The case $n=1$ is obvious. Now assume the
statement is valid for $m \leq n-1$. Let $T$ be a representation killed
by $p^n$. Then we have an exact sequence $0 \to T' \to T \to T''\to 0 $,
where $T ' = p ^{n-1} T$ and $T'' = T/T'$. Since $T''$ is killed by
$p^{n-1}$, by induction, there exists an $L\in \calC $ that lifts $T''$.
Denote the surjections $L \to T''$ and $T \to T''$ by $f$ and $g$
respectively. Set $M:= T \times_{T''} L= \{(x, y) \in T \times L| g(x)=
f (y) \}$. Then we have an exact sequence $0 \to T' \to M \to L \to 0$.
Since $L$ is free over $\Z_p$, the sequence is split as $\Z_p$-module.
In particular $pM \simeq pL \oplus pT' = pL$ is finite free over
$\Z_p$. Now we have exact sequence $0 \to pM \to M \to M'\to 0 $ with
$M' = M / pM$. Since $M/ pM$ is killed by $p$, there exists an $L' \in
\calC$ such that $L'$ lifts $M / pM$. Set $N := M \times _{M'} L'$. It
sits in the exact sequence $0 \to pM \to N \to L' \to 0$, and since $pM$
and $L'$ are both finite free, $N$ is also. Note that $N$ is a lift of
$M$ hence a lift of $T$. Now it remains to show that $N $ is in $\calC$.
To see this, note that $N := M \times _{M'} L' \subset M \times L'$.
Then $pN \subset (pM) \times {pL'}$. But $pM \simeq pL \in \calC$. Hence
$pN \subset {pL} \times pL'$ belongs to $\calC$.
\end{proof}

We also have a kind of descent property:

\begin{prop}
\label{prop:descent}
Assume that $\calC$ is stable under subobjects and that the answer of
question \ref{question} is ``yes''. Let $L/K$ a finite extension.

Then, for any $T \in \Rept(G_L)$ (resp. $T \in \Reppn(G_L)$), there
exist $L$ and $L'$ in $\calC$ such that $T \simeq L/L'$ as
$G_L$-representations.
\end{prop}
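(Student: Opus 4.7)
The plan is to induce $T$ up to $G_K$, apply the hypothesis there, and then exploit the canonical $G_L$-equivariant projection from the induced module back down to $T$. Set $\tilde T := \mathrm{Ind}_{G_L}^{G_K} T$, a torsion $G_K$-representation of finite $\Z_p$-length (still killed by $p^n$ if $T \in \Reppn(G_L)$). By hypothesis there exist $M, M' \in \calC$ with $\tilde T \simeq M/M'$ as $G_K$-representations. Fixing a system of coset representatives $1 = g_1, \ldots, g_d$ for $G_L \backslash G_K$, we have $\tilde T = \bigoplus_{i=1}^d g_i \otimes T$ as $\Z_p$-modules. For $h \in G_L$, writing $hg_i = g_{\sigma(i)} h'_i$ with $h'_i \in G_L$ defines a permutation $\sigma$ of $\{1, \ldots, d\}$ fixing $1$; this shows that both $1 \otimes T \simeq T$ and its complement $T' := \bigoplus_{i \geq 2} g_i \otimes T$ are $G_L$-stable. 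Hence there is a canonical $G_L$-equivariant short exact sequence
\[0 \to T' \to \tilde T \to T \to 0.\]

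Identifying $\tilde T$ with $M/M'$, let $N \subset M$ denote the preimage of $T'$; then $M' \subset N$, the submodule $N$ is $G_L$-stable, and the projection $M \twoheadrightarrow M/N$ yields a $G_L$-equivariant isomorphism $M/N \simeq T$. Since $\calC$ is stable under subobjects, $N$ lies in $\calC$ (viewed as a subcategory of $\Repf(G_L)$ via restriction, as is implicit in the statement), and this exhibits $T$ as $M/N$ with $M, N \in \calC$, as desired.

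The main subtlety is interpretational: the submodule $N$ is only $G_L$-stable, not $G_K$-stable, so the proposition must tacitly enlarge $\calC$ to its $G_L$-analogue---the category of $G_L$-representations arising by restriction from objects of $\calC$, or, in the motivating examples of the paper, $G_L$-lattices in crystalline or semi-stable $G_L$-representations with prescribed Hodge--Tate weights (for which restriction does preserve the relevant properties). Under this reading, stability under $G_L$-subobjects is inherited from the stability under $G_K$-subobjects of $\calC$, and the argument goes through without further obstacle.
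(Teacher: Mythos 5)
Your proof is correct and follows essentially the same route as the paper's: induce $T$ to $G_K$, realize the induction as a quotient of objects of $\calC$, and pull back along the canonical $G_L$-equivariant projection $\mathrm{Ind}_{G_L}^{G_K}T \to T$ (the paper phrases this as composing a lift $L_0 \to \mathrm{Ind}_{G_L}^{G_K}T$ with the first projection). If anything you are more careful than the paper on two points it glosses over: that only the identity-coset summand, not all of $T^{[K:L]}$, is needed, and that the resulting subobject $N$ is merely $G_L$-stable, so ``stable under subobjects'' must be read in the $G_L$-sense.
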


\begin{proof}
By a previous remark, it is enough to show that $T$ admits a lift in
$\calC$. Let $T_0 := \t{Ind}^{G_L}_{G_K} (T)$. By assumption, there
exists a lift $L_0 \to T_0$ with $L_0 \in \calC$. But as
$G_L$-representations, $T_0 \simeq T^{[K:L]}$. Composing the map
$L_0 \to T^{[K:L]}$ with the first projection, we get the desired
lift.
\end{proof}

Nevertheless, of course, the answer to Question \ref{question} is in
general negative. For instance, we have the following theorem that can
be seen as a consequence of ramification bounds obtained in this
paper.

\begin{theorem}
For any $r >0$, answer to Question \ref{question} is ``no'' if $\calC$
is the category of lattices in semi-stable representations with
Hodge-Tate weights in $\{0, \ldots, r\}$.
\end{theorem}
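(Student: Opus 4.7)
The plan is to use Theorem~\ref{intro:main}.(1) as an obstruction: any $T\in\Reppn(G_K)$ realized as $L/L'$ with $L,L'\in\calC$ satisfies $\mu_{L_T/K}\le\mu_0(n,e,r,p)$, where $\mu_0=1+e(n+\alpha+\max(\beta,\tfrac{1}{p-1}))$ is the explicit constant from the theorem. Crucially, specializing to $n=1$ (i.e., to $T$ killed by $p$), the bound $\mu_0(1,e,r,p)$ depends only on $K$ and $r$; in particular it is independent of the dimension of $T$. Hence it will suffice to exhibit, for each $r$, a $p$-torsion representation of $G_K$ whose splitting field has upper ramification break strictly larger than this constant.

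The construction I would use is the following. Choose a tower of totally ramified Galois extensions $M_N/K$ of $p$-power degree whose upper ramification breaks tend to $\infty$ with $N$. A convenient explicit choice is $M_N\subset K(\zeta_{p^{N+1}})$, the (for $N$ large) cyclic $p$-power subextension obtained by taking the fixed field of the tame inertia. Set $T_N:=\F_p[\gal(M_N/K)]$, the regular $\F_p$-representation of $\gal(M_N/K)$ viewed as a $G_K$-module via the natural projection $G_K\twoheadrightarrow\gal(M_N/K)$. Then $T_N$ is killed by $p$, and since the regular representation is faithful on $\gal(M_N/K)$, the splitting field of $T_N$ is exactly $M_N$.

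For $N$ large enough that $\mu_{M_N/K}>\mu_0(1,e,r,p)$, Theorem~\ref{intro:main}.(1) forbids $T_N$ from arising as $L/L'$ with $L,L'\in\calC$, producing the desired counterexample to Question~\ref{question}.

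The main obstacle is verifying that $\mu_{M_N/K}$ indeed grows without bound. This reduces to the standard computation of the upper ramification filtration of $K(\zeta_{p^{N+1}})/K$: its upper breaks (up to lower-order corrections depending on the ramification of $K$) occur at $1,2,\ldots,N$, and taking the cyclic $p$-power subextension only removes the tame break at $0$, so the largest break, and hence $\mu_{M_N/K}$, tends to $\infty$ linearly in $N$. Once this is in hand, the desired inequality $\mu_{M_N/K}>\mu_0(1,e,r,p)$ holds as soon as $N$ exceeds a constant depending only on $e,r,p$.
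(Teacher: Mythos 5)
Your proposal is correct and follows essentially the same route as the paper's (first) proof: the paper likewise takes a Galois extension $F/K$ with very large ramification, lets $T$ be the regular representation of $\gal(F/K)$ (so that the splitting field is exactly $F$), and invokes Theorem \ref{intro:main} to rule out a lift with Hodge--Tate weights in $\{0,\ldots,r\}$. You merely make the choice of $F$ explicit (via cyclotomic subextensions) and spell out the uniformity of the bound in the dimension, both of which the paper leaves implicit; the paper also records a second, independent proof via the lifting criterion of \cite{sec1:liu}.
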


\begin{proof}
There are several ways to prove this theorem. Below, we give two
different methods.

The first one is based on results shown in this paper. Select a Galois
extension $F/K$ which has very large ramification and let $T$ be the
regular representation with $\Z/p^n\Z$-coefficients of $\gal(F/K)$. Then
the splitting field of $T$ is $F$, and Theorem \ref{intro:main} shows
that $T$ cannot in general be lifted a semi-stable representation with
Hodge-Tate weights in $\{0, \dots, r\}$.

The second proof we would like to give uses the main result of
\cite{sec1:liu} which states that a finite free $\Z_p$-representation $L$
of $G_K$ is a lattice in a crystalline (resp. semi-stable)
representation with Hodge-Tate weights $\{0, \ldots, r\}$ if and only if
$L/p^n L$ is a quotient of two such lattices. Therefore, starting from a
representation $L$ such that $L \otimes_{\Z_p} \Q_p$ is not semi-stable,
there must exist an integer $n$ such that $L/p^n L$ gives a
counter-example to Question \ref{question} (with the category $\calC$ of
the theorem).
\end{proof}

Unfortunately, the above proof does not help us to solve the following
more interesting question:

\begin{question}
\label{q2}
Has Question \ref{question} a positive answer when $\calC$ is the
category of all lattices in semi-stable representations?
\end{question}

In fact, to check the above question, it suffices to look at
representations killed by $p$ (by Proposition \ref{prop:killedp}) and we
may assume that $K = \Q_p$ (by Proposition \ref{prop:descent}). Here are
some partial results in favor of a positive answer to Question \ref{q2}.

\begin{prop}
Let $T$ be a torsion representation of $G_\infty$. Then $T$ is a
quotient of two representations arising from finite free Kisin modules.
\end{prop}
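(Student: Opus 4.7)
The plan is to reduce to the torsion classification of Kisin modules established in \cite{sec1:liu}. Given a torsion $\Z_p[G_\infty]$-module $T$ killed by some $p^n$, the first step is to realize $T$ as $T_{\gs_n}(\M)$ for some torsion Kisin module $\M \in \Kmn$, of possibly large height. This is the essential surjectivity of $T_{\gs_\infty}$ on torsion representations; it follows from Fontaine's equivalence between torsion étale $(\varphi, \O_\E)$-modules and torsion $\Z_p$-representations of $G_\infty$, combined with the existence, inside any finite-length étale $\varphi$-module over $\O_\E$, of a $\varphi$-stable $\gs$-lattice satisfying the height condition for some (possibly large) $r$. Since no height bound is imposed by the proposition, this step is unconstrained by $r$.

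Next, applying Proposition 2.3.2 of \cite{sec1:liu}, recalled in \S2.1, one writes $\M$ as the quotient of two finite free Kisin modules of the same rank: there exists an exact sequence of Kisin modules
\[
0 \to \L_1 \to \L_2 \to \M \to 0
\]
with $\L_1, \L_2$ finite free over $\gs$. Applying the snake lemma to multiplication by $p^n$ on this sequence (exactly as at the start of the proof of Lemma \ref{free-torsion}) yields the $4$-term exact sequence $0 \to \M \to (\L_1)_n \to (\L_2)_n \to \M \to 0$ of objects of $\Kmn$. Applying the exact contravariant functor $T_{\gs_\infty}$ (Corollary 2.3.4 of \cite{sec1:liu}) and identifying $T_{\gs_n}((\L_i)_n) \simeq T_\gs(\L_i)/p^n T_\gs(\L_i)$ produces a $4$-term exact sequence of $\Z_p[G_\infty]$-modules. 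A short diagram chase combines this with the elementary observation that the restriction map $T_\gs(\L_2) \to T_\gs(\L_1)$ is injective --- any $\varphi$-equivariant $f \colon \L_2 \to \gs^\ur$ vanishing on $\L_1$ must vanish on $\L_2$, since $\L_2/\L_1$ is $p^n$-torsion and $\gs^\ur$ is $p$-torsion-free --- and has cokernel killed by $p^n$, to yield the desired short exact sequence
\[
0 \to T_\gs(\L_2) \to T_\gs(\L_1) \to T_{\gs_n}(\M) \simeq T \to 0,
\]
which exhibits $T$ as a quotient of two representations arising from finite free Kisin modules.

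The main obstacle is the essential surjectivity in the first step: producing a Kisin-module structure of finite height on an arbitrary torsion étale $\varphi$-module over $\O_\E$ is not explicit in the present paper and must be imported from \cite{sec1:liu}. Once this is granted, the remainder of the proof is merely Proposition 2.3.2 of \cite{sec1:liu} together with a snake-lemma manipulation strictly parallel to the one already carried out for $(\varphi, \hat G)$-modules in Lemma \ref{free-torsion}.
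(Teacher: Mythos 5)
Your overall architecture matches the paper's: realize $T$ as $T_{\gs_n}(\M)$ for a torsion Kisin module $\M$ of unbounded height, invoke Proposition 2.3.2 of \cite{sec1:liu} to write $\M$ as a quotient of two finite free Kisin modules, and d\'evisse back to representations. Your second half --- the snake-lemma manipulation yielding $0 \to T_\gs(\L_2) \to T_\gs(\L_1) \to T \to 0$ --- is correct and is actually spelled out in more detail than in the paper, which leaves that step implicit.

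The gap is the first step, which you yourself flag as ``to be imported from \cite{sec1:liu}''. The existence of a $\varphi$-stable $\gs$-lattice of finite height inside an arbitrary torsion \'etale $\varphi$-module is not among the results this paper quotes from \cite{sec1:liu} (whose torsion theory is developed for a \emph{fixed} height $\leq r$, for which essential surjectivity certainly fails), and it is precisely the mathematical content of the paper's proof of this proposition; deferring it leaves the statement unproved. The paper fills it as follows. First reduce to $T$ killed by $p$ by a d\'evissage as in Proposition \ref{prop:killedp}; this reduction is doing real work, since for $n>1$ the \'etale $\varphi$-module lives over $\O_\E/p^n\O_\E$, which is not a domain and over which the module need not be free, so the determinant argument below would not apply directly. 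For $p$-torsion $T$, let $M$ be the attached \'etale $\varphi$-module over $k((u))$ and pick a basis with Frobenius matrix $A$. Rescaling the basis by $u$ replaces $A$ by $u^{p-1}A$, so one may assume $A$ has entries in $k[[u]]$; \'etaleness gives $\det A \neq 0$; choosing $r$ with $\t{val}_u(\det A) \leq er$ and using the adjugate of $A$ shows that the $k[[u]]$-lattice spanned by this basis is a Kisin module of height $\leq r$. With this construction supplied (and the reduction to $n=1$ made explicit), your argument closes up.
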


\begin{proof}
By a similar argument as in proof of Proposition \ref{prop:killedp}, we
may assume that $T$ is killed by $p$. Let $M$ be the \'etale
$\varphi$-module over $k((u))$ attached to $T$ (see for instance
\cite{sec1:fontaine2}, A. 3). Since any torsion Kisin
module can be written as a quotient of two free Kisin modules, it is
enough to show that $M$ admits a submodule $\M$ which is a Kisin module
for some integer $r$. Let $(e_1, \ldots, e_d)$ be a basis of $M$ and $A$
be the matrix with coefficients in $k((u))$ such that
$$(\varphi(e_1), \ldots, \varphi(e_d)) = (e_1, \ldots, e_d) A.$$
Since changing all $e_i$'s in $u e_i$ changes $A$ in $u^{p-1} A$, one
may assume that $A$ has coefficients in $k[[u]]$. Furthermore, the
\'etaleness of Frobenius on $M$ exactly means that $A$ is invertible in
$k((u))$. Hence $\det A$ does not vanish. Finally, we choose $r$ such
that $\det A$ divides $u^{er}$ (that is $r \geq \frac 1 e \t{val}_u(\det
A)$) and we are done.
\end{proof}

\begin{theorem}
\label{theo:tame}
Any tamely ramified $\F_p$-representation of $G_K$ can be written as a
quotient of two lattices in a crystalline representation with Hodge-Tate
weights between $0$ and $p-1$.
\end{theorem}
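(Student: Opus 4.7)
The plan is to split the argument into two parts. First I would handle absolutely irreducible tame $\F_p$-representations by an explicit Lubin-Tate construction, and then bootstrap to general tame reps using the Kisin module / $(\varphi, \hat G)$-module machinery developed earlier in the paper.

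For the irreducible case, I would invoke Serre's classification: every absolutely irreducible tame $\F_p$-rep of $G_K$ is of the form $T \simeq \t{Ind}_{G_{K_h}}^{G_K}(\chi)$, where $K_h/K$ is the unramified extension of degree $h$ and $\chi = \omega_h^a \cdot \eta$ with $\omega_h$ the fundamental character of level $h$, $\eta$ an unramified character, and $0 \le a \le p^h - 2$. Expanding $a = a_0 + a_1 p + \cdots + a_{h-1} p^{h-1}$ in base $p$, every digit lies in $\{0, \dots, p-1\}$. I would then exhibit a rank-one crystalline lift $\tilde\chi$ of $\chi$ with labeled Hodge-Tate weights $(a_0, \dots, a_{h-1})$; concretely one takes $\tilde\chi := \prod_{i=0}^{h-1} \chi_{LT,i}^{a_i} \cdot \tilde\eta$, where the $\chi_{LT,i}$ are the $h$ conjugate Lubin-Tate characters (each of Hodge-Tate weight $1$ at a distinct embedding of $K_h$) and $\tilde\eta$ is any unramified lift of $\eta$. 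Since $K_h/K$ is unramified, induction preserves crystallinity and Hodge-Tate weights, so $\t{Ind}_{G_{K_h}}^{G_K}(\tilde\chi)$ is a lattice in a crystalline $G_K$-representation with Hodge-Tate weights in $\{0, \dots, p-1\}$ that reduces mod $p$ to $T$.

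For general (possibly non-semisimple) tame $T$, I would work directly with the Kisin module apparatus. Attach to $T|_{G_\infty}$ its étale $\varphi$-module $M$ over $k(\!(u)\!)$ and produce a $\varphi$-stable finite free $k[\![u]\!]$-lattice $\M \subset M$ on which the matrix of $\varphi$ has determinant dividing $E(u)^{p-1}$: tameness is exactly what should allow the sharp height bound $r \le p-1$ (sharper than the generic bound in the proposition preceding the theorem), by exploiting that the prime-to-$p$ inertia action forces $\varphi$ into a controlled form. Next, equip $\M$ with a $(\varphi, \hat G)$-structure so that $\hat T_1(\hat \M) \simeq T$ as $\Z_p[G_K]$-modules; tameness of the $G_K$-action is what makes the required $\hat G$-action on $\hat \M = \hR \otimes_{\varphi, \gs} \M$ available and compatible. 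Finally, following the proof of Theorem \ref{main}.(3), write $\hat \M$ as the cokernel of an injection $\hat \L' \inj \hat \L$ of finite free $(\varphi, \hat G)$-modules of height $\le p-1$; then $\hat T(\hat \L') \subset \hat T(\hat \L)$ are lattices in a semi-stable $G_K$-rep with Hodge-Tate weights in $\{0, \dots, p-1\}$ whose quotient is $T$.

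The two main obstacles are (i) establishing the sharp height bound $r \le p-1$ in the general tame case, which requires a careful analysis of how prime-to-$p$ tame inertia constrains the Frobenius matrix on $\M$, and (ii) upgrading the \emph{semi-stable} lift produced above to a genuinely \emph{crystalline} one. The latter should follow by verifying that the monodromy operator $N$ on the associated Breuil module $\D = S[1/p] \otimes_{\varphi, \gs} \L$ vanishes, a property I expect to be forced by tameness via the explicit Lubin-Tate form of the $\hat G$-action in the irreducible case, bootstrapped to general tame $T$ by d\'evissage.
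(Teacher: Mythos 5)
Your first step (the absolutely irreducible case via Serre's classification and products of conjugate Lubin--Tate characters) is a correct and standard alternative to what the paper does: the paper instead writes down an explicit rank-$d$ Kisin module and filtered $\varphi$-module with $N=0$ whose mod-$p$ reduction realizes the prescribed product of fundamental characters (Lemma \ref{lem:computkisin}); the two constructions produce essentially the same crystalline lifts. The genuine gap is in your second step. You treat "general, possibly non-semisimple tame $T$" as requiring a direct construction of a height-$\leq p-1$ Kisin module with a $(\varphi,\hat G)$-structure, and you yourself flag the two decisive points --- that tameness forces height $\leq p-1$, and that the resulting free lift can be taken with $N=0$ --- as unresolved "obstacles." Neither is formal. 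The height of a Kisin module is read off from the $G_\infty$-action alone, and relating tame ramification of $T$ to the elementary divisors of the Frobenius matrix is precisely the hard "weights in $\{0,\dots,p-1\}$" content of the theorem, not a consequence of the prime-to-$p$ inertia acting "in a controlled form." Likewise, a free $(\varphi,\hat G)$-module lifting a given torsion one need not have vanishing monodromy; crystallinity has to be built into the choice of lift, which returns you to an explicit construction. Moreover, even granting a torsion $(\varphi,\hat G)$-module for $T$, writing it as a cokernel of \emph{free} $(\varphi,\hat G)$-modules requires lifting the $\hat G$-structure to the free resolution, which Theorem \ref{main}.(3) does not provide (it goes in the opposite direction, starting from a torsion semi-stable representation).

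The observation that makes the paper's proof work, and which your proposal misses, is that the general case never needs a non-semisimple construction: since tame inertia is procyclic of pro-order prime to $p$, the restriction $T|_I$ of a tame mod-$p$ representation to inertia is automatically semisimple, hence a direct sum of the irreducibles you already know how to lift. One then lifts $T|_I$, observes that the resulting isomorphism $L/pL \simeq T$ is $G_{K'}$-equivariant for some finite \emph{unramified} $K'/K$, and recovers a genuine $G_K$-lift by forming $\t{Ind}_{G_{K'}}^{G_K} L \to T$, using that induction from an unramified subextension preserves crystallinity and Hodge--Tate weights. (The paper also first reduces to $e=1$ and then to tamely ramified base fields by an induction/extension argument on $K$ itself; your Lubin--Tate construction plausibly works over general $K$ directly, so that reduction is less essential, but the semisimplicity-plus-induction step is indispensable and its absence leaves your argument incomplete.)
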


\begin{remark}
In particular, the answer to Question \ref{q2} is yes if $T$ is tamely
ramified and killed by $p$.
\end{remark}

Before giving the proof of Theorem \ref{theo:tame}, we need to achieve
some computations with Kisin modules in the ``unramified case'' ($e=1$).
So from now, we assume $e=1$ (that is $K = W[1/p]$) and we fix $\pi = -p$
to be the chosen uniformizer, so that $E(u) = u + p$. We denote by $I$
the inertia subgroup of $G_K$ and for all integer $d$, we recall the
definition of the $d$-th fundamental tame inertia character $\theta_d$:
$$\begin{array}{rcl}
\theta_d \, : \, I & \to & \mu_{p^d-1}(\bar K) \simeq
\F_{p^d}^\star \\
g & \mapsto & \frac{g(\pi^{1/(p^d-1)})}{\pi^{1/(p^d-1)}}
\end{array}$$
where $\F_{p^d}$ is the subfield of $\bar k$ with $p^d$ elements.
For all $i \in \Z/d\Z$, set $\theta_{d,i} = \theta_d^{p^i}$. Let $(n_i)
_{i \geq 0}$ be a sequence of non negative integers that is periodic of
period $d$. Attached to it, define a filtered $\varphi$-module $D$ and a
free Kisin module $\M$ by
\renewcommand{\arraystretch}{1.2}
$$\left\{\begin{array}{l}
D = K e_0 \oplus K e_1 \oplus \cdots \oplus K e_{d-1} \\
\varphi(e_{i+1}) = p^{n_i} e_i \quad (i \in \Z/d\Z) \\
\Fil^n D = \sum_{n_i \geq n} K e_{i+1} \quad (i \in \Z/d\Z)
\end{array}\right.
\qquad
\left\{\begin{array}{l}
\M = \gs \mathfrak e_0 \oplus \gs \mathfrak e_1 \oplus \cdots
\oplus \gs \mathfrak e_{d-1} \\
\varphi(\mathfrak e_{i+1}) = (u+p)^{n_i} \mathfrak e_i \quad (i \in
\Z/d\Z)
\end{array}\right.$$
If $\psi$ is a character of $I$ with values in $\F_{p^d}$, we will
denote by $\F_{p^d}(\psi)$ the one-dimensional representation of $I$
given by $\psi$. Our technical lemma is the following.

\begin{lemma}
\label{lem:computkisin}
We keep previous notations.
\begin{enumerate}
\item One have a canonical $G_\infty$-equivariant $\Q_p$-linear
isomorphism
$$T_\gs(\M) \otimes_{\Z_p} \Q_p \simeq \hom_{\varphi,\Fil}(D,
B^+_\cris).$$
\item Setting $I_\infty := I \cap G_\infty$, one have a canonical
$I_\infty$-equivariant $\F_p$-linear isomorphism
$$T_{\gs_1}(\M/p\M) \simeq \F_{p^d} \big(\theta_{d,0}^{n_0}
\theta_{d,1}^{n_1} \cdots \theta_{d,d-1}^{n_{d-1}} \big).$$
\end{enumerate}
\end{lemma}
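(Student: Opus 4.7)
For part (1), the plan is to invoke Kisin's equivalence between finite free Kisin modules of height $\leq r$ (with $r = \max_i n_i$) and Galois-stable $\Z_p$-lattices in crystalline representations with Hodge-Tate weights in $[0,r]$. Under this equivalence, $\M$ corresponds to a lattice $L$ inside some crystalline representation $V$, and $T_\gs(\M) \otimes_{\Z_p} \Q_p \simeq V$ as $G_\infty$-representations. The attached filtered $\varphi$-module is computed from $\M$ via $D_V = \varphi^*(\M)/u\varphi^*(\M)$; writing $e_i$ for the image of $1 \otimes \mathfrak e_i$, one immediately checks $\varphi(e_{i+1}) = p^{n_i} e_i$, matching the Frobenius on our $D$. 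To identify the filtration, I would unwind Kisin's construction (or use uniqueness of the weakly admissible filtration compatible with a given $\varphi$-module structure, together with the fact that the Hodge--Tate weights must be read off the exponents of $E(u)$ in the structure of $\M$) to verify that $\Fil^n D_V$ is exactly $\sum_{n_i \geq n} K e_{i+1}$. The standard identification $V \simeq \hom_{\varphi,\Fil}(D, B^+_\cris)$ for crystalline representations then concludes.

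For part (2), I would work directly with the definitions. By Lemma \ref{lem:tgsn}, $T_{\gs_1}(\M/p\M) = \hom_{\gs,\varphi}(\M/p\M, R)$, since $W_1(R) = R$. Because $(u+p)^{n_i} \equiv u^{n_i} \pmod p$ and $u \mapsto \pi$ in $R$, a morphism amounts to choosing $x_i = f(\mathfrak e_i) \in R$ satisfying $x_{i+1}^p = \pi^{n_i} x_i$. Pick a compatible system of $(p^d-1)$-th roots of $\pi$ in $R$ (possible since $\gcd(p^d-1,p)=1$, so each $\pi_s$ admits a $(p^d-1)$-th root and these can be chosen compatibly under $p$-th powers), yielding $y \in R$ with $y^{p^d-1} = \pi$; note $y \in \gs_1^\ur$, as it satisfies the separable polynomial $X^{p^d-1} - u$ over $\gs_1 = k[[u]]$. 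Set $A_i := n_i + n_{i+1} p + \cdots + n_{i+d-1} p^{d-1}$ (indices modulo $d$); periodicity of $(n_i)$ yields $A_d = A_0$ and one directly verifies $p A_{i+1} = (p^d - 1) n_i + A_i$. The ansatz $x_i = c_i y^{A_i}$ with $c_i \in \bar k \subset R$ then reduces the system to $c_{i+1}^p = c_i$, so $c_0 \in \F_{p^d}$ and $c_{i} = \sigma^{-i}(c_0)$. Thus the solution space is a $1$-dimensional $\F_{p^d}$-vector space, giving an $\F_p$-linear isomorphism $T_{\gs_1}(\M/p\M) \simeq \F_{p^d}$ via $f \mapsto c_0$.

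It remains to compute the $I_\infty$-action. Any $g \in I_\infty$ fixes $\bar k \subset R$ (inertia acts trivially on the residue field), so $g(c_0) = c_0$. Moreover, by the very definition of the fundamental character $\theta_d$ together with compatibility of the chosen system of roots, one verifies $g(y) = \theta_d(g) \cdot y$ inside $R$, where $\theta_d(g) \in \F_{p^d}^\times$ is embedded in $R$ via the canonical lift $\bar k \hookrightarrow R$ (which makes sense since $\bar k$ is perfect). Therefore $(gf)(\mathfrak e_0) = g(c_0 y^{A_0}) = \theta_d(g)^{A_0} \cdot f(\mathfrak e_0)$, and using $A_0 = \sum_{i=0}^{d-1} n_i p^i$ together with $\theta_{d,i} = \theta_d^{p^i}$, one gets $\theta_d^{A_0} = \theta_{d,0}^{n_0} \theta_{d,1}^{n_1} \cdots \theta_{d,d-1}^{n_{d-1}}$, as claimed. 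The main obstacles I anticipate are: in part (1), precisely matching Kisin's abstract construction of $\Fil^n D_V$ to the combinatorial formula $\sum_{n_i \geq n} K e_{i+1}$, which may require unwinding several layers of the Breuil--Kisin formalism; in part (2), justifying the identity $g(y) = \theta_d(g) \cdot y$ in all of $R$ (not merely after reduction), which relies on matching the Frobenius-compatible system $(g(y^{(s)})/y^{(s)})_s$ with the image of $\theta_d(g) \in \bar k$ under $\bar k \hookrightarrow R$, using $\gcd(p^d-1,p) = 1$ and perfectness.
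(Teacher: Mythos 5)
Your part (2) is correct and, since the paper dismisses it as ``a simple computation left to the reader'', you have actually supplied more detail than the authors do. Two small points to tighten: the ansatz $x_i = c_i y^{A_i}$ must be shown to capture \emph{all} solutions, which follows because iterating $x_{i+1}^p = \underline{\pi}^{n_i} x_i$ around the cycle gives $x_0^{p^d} = \underline{\pi}^{A_0} x_0$, and $R$ is a domain with $\mu_{p^d-1}(R) \simeq \F_{p^d}^\times$ via the Teichm\"uller section $\bar k \hookrightarrow R$, so $x_0 \in \F_{p^d}\cdot y^{A_0}$ and the remaining $x_i$ are then forced by perfectness of $R$; and the identity $g(y) = \theta_d(g)\, y$ in $R$ is justified exactly as you sketch.

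Part (1), by contrast, has a genuine gap, and it sits exactly where you flag an ``obstacle'': the identification of the filtration. The fallback you offer in parentheses --- uniqueness of the weakly admissible filtration compatible with a given $\varphi$-module structure --- is false: an isocrystal admits in general many weakly admissible filtrations, and knowing the Hodge--Tate weights as a multiset does not determine which line $K e_{i+1}$ sits in which $\Fil^n$; that flag is the entire content of the claim. The paper closes this gap by an explicit computation inside Kisin's formalism: it first equips $\calM = \O \otimes_\gs \M$ with an explicit differential operator $N_\nabla$ (needed even to place $\calM$ in the category where Kisin's Theorem 1.3.8 and Proposition 2.1.5 apply), then writes down the unique $\varphi$-compatible section $\xi$ of $\calM \to D(\calM) = \calM/u\calM$, namely $\xi(\bar{\mathcal E}_i) = \prod_{j \geq 0} \varphi^j\big(\frac{u}{p} + 1\big)^{n_{i+j}} \mathcal E_i$, from which Kisin's recipe for the filtration immediately yields $\bar{\mathcal E}_{i+1} \in \Fil^{n_i} D(\calM)$; finally, the resulting filtration-compatible bijection $D \to D(\calM)$ is an isomorphism of \emph{filtered} modules because both sides are admissible and the category of admissible filtered $(\varphi,N)$-modules is abelian. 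Without this (or an equivalent) computation, your argument asserts rather than proves the key formula $\Fil^n D = \sum_{n_i \geq n} K e_{i+1}$.
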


\begin{proof}
In what follows, we will make an intensive use of results of
\cite{intro:kisin2}. Before beginning the proof, we would like to
emphasize that in the latter reference, $K_0$ is defined to be the
maximal absolutely unramified subextension of $K$, whereas in this
paper $K_0$ is just $K$. By chance, since we are assuming $e = 1$,
the two definitions coincide.

We define a $(\varphi, N_\nabla)$-module over $\O$ (see \emph{loc.
cit.}, \S 1.1 for the definition of all of this) by the following:
$$\left\{\begin{array}{l}
\calM = \O \mathcal E_0 \oplus \O \mathcal E_1 \oplus \cdots
\oplus \O \mathcal E_{d-1} \\
\varphi(\mathcal E_{i+1}) = (u+p)^{n_i} \mathcal E_i \quad (i \in
\Z/d\Z) \\
N_\nabla(\mathcal E_i) = \Bigg(\displaystyle \sum_{j=0}^\infty n_{i+j}
p^{j-1} u^{p^j} \lambda_j\Bigg) \mathcal E_i
\end{array}\right.$$
where $\lambda_j = \prod_{n \geq 0, n \neq j} \varphi^n(\frac u p + 1)$.
It is straitforward to check that the above $(\varphi, N_\nabla)$-module is well-defined (one only need to check that $ N_\nabla \varphi = (u+p)\varphi N_\nabla  $), and  that $\calM = \M \otimes_\gs \O$ as a $\varphi$-module. Now
let's compute the filtered $(\varphi,N)$-module $D(\calM)$ where $D$ is the
functor defined in \emph{loc. cit.}, \S 1.2.5. By definition $D(\calM) =
\calM / u \calM$ and it is equipped with Frobenius and monodromy
operator induced respectively by $\varphi$ and $N_\nabla$ on the
quotient. Computing the filtration is little more complicated. We first
need to determine the unique $\varphi$-compatible section $\xi : D(\calM)
\to \calM$ to the natural projection. Actually, one can easily check
that it is given by
$$\xi(\bar {\mathcal E}_i) = \Bigg(\prod_{j = 0}^{\infty} \varphi^j\Big(\frac u p + 1 \Big)^{n_{i+j}}\Bigg) {\mathcal E}_i \qquad (i \in \Z/d\Z),$$
where $\bar{\E}_i= (\E_i \mod u)  $ is the image of $\E_i$ in $D(\CM)$. Now applying recipe of \emph{loc. cit.}, \S 1.2.7, one see that
$\bar \E_{i+1} \in \Fil^{n_i} D(\calM)$. Therefore, if $D$ is equipped
with $N = 0$, the map $f : D \to D(\calM)$, $e_i \mapsto \bar{\mathcal
E_i}$ is compatible with $\varphi$, $N$ and filtration on both side.
Moreover, Theorem 1.3.8 of \emph{loc. cit.} shows that $D(\calM)$ is
admissible. Since the category of admissible filtered
$(\varphi,N)$-modules is abelian, $f$ has to be an isomorphism. Finally,
we use Proposition 2.1.5 of \emph{loc. cit.} to get (1).

The second part of the lemma is a simple computation left to the reader.
\end{proof}

\begin{proof}[Proof of Theorem \ref{theo:tame}]
We first assume $e=1$. Denote by $I$ the inertia subgroup of $G_K$. Let
$T$ be a tamely ramified representation of $G_K$ killed by $p$. Since
the tame inertia group is procyclic or order prime to $p$, $T|_I$ splits
as a direct sum of irreducible representations. By \cite{serre}, \S 1.7,
every irreducible representation of $I$ is isomorphic to
$$\F_{p^d} \big(\theta_{d,0}^{n_0} \theta_{d,1}^{n_1} \cdots
\theta_{d,d-1}^{n_{d-1}} \big)$$
for one sequence of integers between $0$ and $p-1$, periodic of period
$d$. Then applying Lemma \ref{lem:computkisin}, we construct a
$I_\infty$-equivariant isomorphism $f : L/pL \simeq T$ where $L$ is a
lattice in a crystalline representation with Hodge-Tate weights in $\{0,
\ldots, p-1\}$. We already know that the wild inertia subgroup $I_p$
does not act of $T$. On the other hand, using that $L/pL$ is a direct
sum of irreducible representations of $G_\infty$ and hence of $G_K$, a
standard argument shows that $I_p$ acts also trivially on $L/pL$. Thus
$f$ commutes with action of $I_p I_\infty = I$. Since $L/pL$ and $T$ are
finite dimensional over $\F_p$, they are finite and $f$ is
$G_{K'}$-equivariant for a finite unramified extension $K'$ of $K$. Let
$g$ be the composite morphism $L \to L/pL \simeq T$. Consider the map
$$\t{Ind}_{G_{K'}}^{G_K} L = \Z_p[G_K] \otimes_{\Z_p[G_{K'}]} L \to
T, \quad [\sigma] \otimes x \mapsto \sigma g(x).$$
It is apparently $G_K$-equivariant and surjective: it is a lift of $T$.
Furthermore, the restriction of $\t{Ind}_{G_{K'}}^{G_K} L$ to $G_{K'}$
is a direct sum of copies of $L$, and hence is crystalline with
Hodge-Tate weights in $\{0, \ldots, p-1\}$. Since $K'/K$ is unramified,
also is $\t{Ind}_{G_{K'}}^{G_K} L$ and we are done in this case.

Next, we assume $K/W[1/p]$ to be tamely ramified. Let $T$ be a
$p$-torsion representation of $G_K$ and set $T_0 =
\t{Ind}_{G_K}^{G_{W[1/p]}} T$. Using that $K/W[1/p]$ is tamely ramified,
we easily see that $T_0$ is still tamely ramified as a representation of
$G_{W[1/p]}$. Hence, one can apply the first part of the proof and then
find a lift $L \to T_0$ where $L$ is a lattice in a crystalline
representation of $G_{W[1/p]}$ with Hodge-Tate weights in $\{0, \ldots,
p-1\}$. Restricting actions to $G_K$, we get a surjective map $L \to
T^{[K:W[1/p]]}$ and then composing with the first projection we get a
lift of $T$, which is convenient since the restriction of a crystalline
representation is again crystalline with same Hodge-Tate weights.

Finally we go to the general case ($K$ arbitrary). Denote by $K'$ the
maximal tamely extension of $W[1/p]$ inside $K$. Let $K^\tr$ (resp.
${K'}^\tr$) be the maximal tamely ramified extension of $K$ (resp.
$K'$). Using that $K/K'$ is totally ramified of degree a power of $p$,
it is easy to check that $K \cap {K'}^\tr = K'$ and $K \: {K'}^\tr =
K^\tr$. We then deduce the existence of a canonical isomorphism between
$\gal(K^\tr/K)$ and $\gal((K')^\tr/K')$. Therefore, any tamely ramified
representation of $G_K$ has a natural prolongation to $G_{K'}$ which
remains tamely ramified. Theorem follows easily from this remark.
\end{proof}




\end{document}